\newtheorem{thm}{Theorem}
\newtheorem{lemma}[thm]{Lemma}
\newtheorem{prop}[thm]{Proposition}
\newtheorem{cor}[thm]{Corollary}
\theoremstyle{definition}
\newtheorem{rem}[thm]{Remark}
\newtheorem{obs}[thm]{Observation}
\newtheorem{condition}[thm]{Condition}
\numberwithin{definition}{section}
\numberwithin{proc}{section}
\numberwithin{equation}{section}
\numberwithin{condition}{section}
\numberwithin{condition}{section}
\numberwithin{prop}{section}
\numberwithin{thm}{section}
\numberwithin{lemma}{section}
\numberwithin{rem}{section}
\numberwithin{cla}{section}
\numberwithin{obs}{section}
\numberwithin{cor}{section}
\definecolor{webgreen}{rgb}{0,.5,0}
\definecolor{Maroon}{HTML}{800000}
\newcommand{\bE}{\mathbb{E}}
\newcommand\bfd{{\mathbf d}}
\newcommand\vbfd{{\vec{\mathbf{d}}}}
\newcommand\dsD{{\mathbb D}}
\newcommand\dsE{{\mathbb E}}
\newcommand\dsG{{\mathbb G}}
\newcommand\dsN{{\mathbb N}}
\newcommand\dsR{{\mathbb R}}
\newcommand\dsZ{{\mathbb Z}}
\newcommand{\Z}{{\mathbb Z}}
\newcommand{\da}{\downarrow}
\newcommand{\ua}{\uparrow}
\newcommand\cA{{\cal A}}
\newcommand\cE{{\cal E}}
\newcommand\cF{{\cal F}}
\newcommand\cH{{\cal H}}
\newcommand\cI{{\cal I}}
\newcommand\cL{{\cal L}}
\newcommand\cN{{\cal N}}
\newcommand\cP{{\cal P}}
\newcommand\cR{{\cal R}}
\newcommand\cS{{\cal S}}
\newcommand\cT{{\cal T}}
\newcommand\cU{{\cal U}}
\newcommand\cV{{\cal V}}
\newcommand\cX{{\cal X}}
\newcommand{\E}[1]{{\mathbb E}\left[#1\right]}
\newcommand{\ee}[1]{{\mathbb E}[#1]}
\newcommand{\e}{{\mathbb E}}
\newcommand{\Vv}[1]{{\mathrm{Var}}(#1)}
\newcommand{\p}[1]{{\mathbb P}\left\{#1\right\}}
\newcommand{\pp}[1]{{\mathbb P}\{#1\}}
\newcommand\inlawHIGH{\,{\buildrel d \over \rightarrow}\,} 
\newcommand\inlaw{{\inlawHIGH}}
\newcommand{\eqd}{\,{\buildrel{\mathrm{def}} \over =}\,}
\newcommand\given[1][]{\;#1{\vert}\;}
\newcommand\cond{\;\middle|\;}
\newcommand\bigO[1]{{O\!\left( #1 \right)}}
\newcommand{\Poi}{\mathop{\mathrm{Poi}}}
\DeclarePairedDelimiter{\floor}{\lfloor}{\rfloor}
\DeclarePairedDelimiter{\ceil}{\lceil}{\rceil}
\newcommand{\tin}{\mathrm{in}}
\newcommand{\tout}{\mathrm{out}}
\newcommand\hnu{{\hat \nu}}
\newcommand\hxi{{\hat \xi}}
\DeclareMathOperator{\dist}{dist}
\DeclareMathOperator{\diam}{diam}
\newcommand\din{D_{{\tin}}}
\newcommand\dinh{\hat{D}_{{\tin}}}
\newcommand\dinp{\din^{+}}
\newcommand\dinpconj{\dinh^{+}}
\newcommand\dinm{\din^{-}}
\newcommand\dnin{(D_{n})_{\tin}}
\newcommand\dninp{\dnin^{+}}
\newcommand\dninm{\dnin^{-}}
\newcommand\dout{D_{{\tout}}}
\newcommand\doutm{\dout^{-}}
\newcommand\dnout{(D_{n})_{\tout}}
\newcommand\Qnda{Q_{n}^{\downarrow}}
\newcommand\Qnua{Q_{n}^{\uparrow}}
\newcommand\hQnda{\hat{Q}_{n}^{\downarrow}}
\newcommand\hQnua{\hat{Q}_{n}^{\uparrow}}
\newcommand\Gn{{\dsG}_{n}}
\newcommand\vecGn{\vec{\dsG}_{n}}
\newcommand\GW{\mathrm{GW}}
\title{The diameter of the directed configuration model}
\author[*]{Xing Shi Cai}
\author[**]{Guillem Perarnau}
\affil[*]{\small\it Uppsala University, Sweden. Email:~{\tt xingshi.cai@math.uu.se}.}
\affil[**]{\small\it UPC. Email:~{\tt guillem.perarnau@upc.edu}.}
\begin{document}
\maketitle

\begin{abstract}
    We show that the diameter of the directed configuration model with \(n\) vertices rescaled by
    \(\log n\) converges in probability to a constant. Our assumptions are the convergence of the in-
    and out-degree of a uniform random vertex in distribution, first and second
    moment.  Our result extends previous results on the diameter of the model and applies to
    many other random directed graphs.
\end{abstract}

\section{Introduction and notations}

\subsection{The directed configuration model}

The configuration model \(\dsG_{n}\) is a uniform random multigraph on \([n]\coloneqq \{1,2,\dots,n\}\) vertices conditioned on its
degree sequence being fixed. It was introduced by Bollob{\'a}s \cite{bollobas1980} and has since
become one of the most well-studied random graph models, see, e.g., \cite{vanderhofstad2016} for
an overview.

The directed version of this model, introduced by Copper and Frieze \cite{cooper2004}, is defined analogously.  Let
\(\vbfd_n=((d^-_1,d_1^+),\dots, (d^-_n,d^+_n))\) be a bi-degree sequence with \(m_n\coloneqq
\sum_{i\in [n]} d^+_i = \sum_{i \in [n]}d^{-}_{i}\).  The directed configuration model \(\vecGn\) is
the random directed multigraph on \([n]\) obtained by first giving
\(d^-_{i}\) in half-edges (called \emph{heads}) and \(d^+_{i}\) out half-edges (called \emph{tails})
to node \(i\), and then choosing a uniform pairing of heads and tails.  In this paper, we mainly
consider the diameter of \(\vecGn\), i.e., the longest distance between two connected nodes in
\(\vecGn\).

\begin{figure}[ht]
    \centering
    \centering\includegraphics[width=0.8\textwidth]{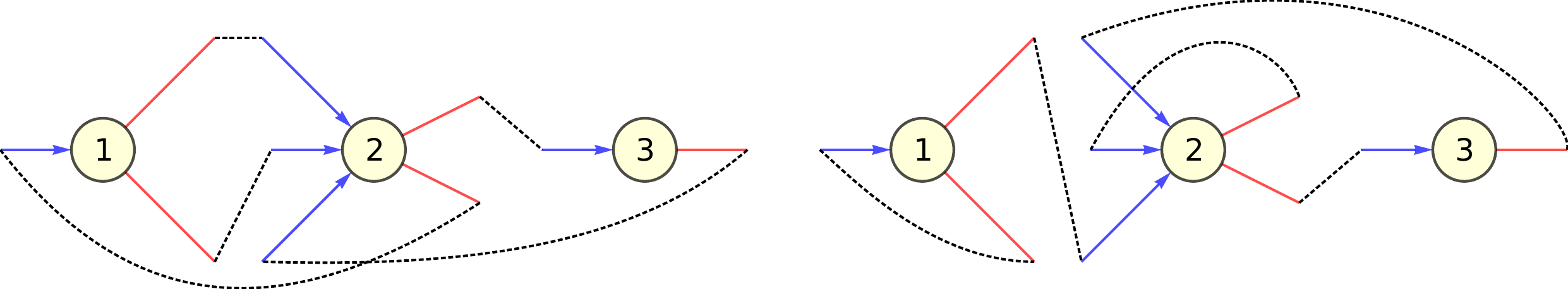}
    \caption{Examples of directed configuration model with $\vbfd_3=((1,2),(3,2),(1,1))$.}
\end{figure}

Many real-world complex networks are by nature directed. Thus, the directed configuration model has been
studied in many applied domains, such as neural networks \cite{amini2010}, 
finance  \cite{amini2013} and social
networks \cite{li2018}.

Let \(U\) be uniform random variable on \([n]\).
Let \(D_{n}=(d_{U}^{-},d_{U}^{+})\).
We denote by \(D^-_n\)  and \(D^+_n\) the marginals of \(D_n\) in each component. Let \(n_{k,\ell}\) be the number of \( (k,\ell)\) in \(\vbfd_{n}\).
 Let $\Delta_n= \max_{i\in[n]} \{d^-_i,d^+_i\}$ be the maximum  degree of $\vbfd_n$.
Consider a sequence of degree sequences \((\vbfd_n)_{n\geq 1}\). 
We assume the following:
\begin{condition}\label{cond:main}
    There exists a discrete probability distribution \(D=(D^-,D^+)\) on \(\dsZ_{\ge 0}^2\) with \(\lambda_{k,
    \ell}\coloneqq\p{D=(k,\ell)}\) such that 
    \begin{enumerate}[(i)]
        \item \(D_n\) converges to \(D\) in distribution: 
            \begin{equation}
                \lim_{n\to \infty} \frac{n_{k,\ell}}{n}  = \lambda_{k,\ell}, \qquad (k,\ell\in
                \dsZ_{\geq 0});
            \end{equation}
        \item \(D_n\) converges to \(D\) in expectation and the expectation is finite: 
            \begin{equation}
                \lim_{n\to \infty} \dsE[D^-_n] =\lim_{n\to \infty} \dsE[D^+_n] =
                \dsE[D^{-}]=\dsE[D^{+}]\eqqcolon\lambda \in (0, \infty);
            \end{equation}
        \item \(D_n\) converges to \(D\) in second moment and the second moments are finite: 
            \begin{enumerate}
                \item \(\lim_{n\to \infty} \dsE[D^-_nD^+_n] = \dsE[D^-D^+] < \infty\); 
                \item \(\lim_{n\to \infty} \dsE[(D^+_n)^2] = \dsE[(D^+)^2] < \infty\); 
                \item \(\lim_{n\to \infty} \dsE[(D^-_n)^2] = \dsE[(D^-)^2] < \infty\).
            \end{enumerate}
    \end{enumerate}
\end{condition}

\begin{rem}\label{rem:simple}
    A simple digraph (directed graph) has no self-loops and no parallel edges of the same direction
    between two vertices.  Let \(\vecGn^{s}\) be a uniform random simple digraph with degree
    sequence \(\vbfd_{n}\).  Conditioned on being simple, $\vecGn$ is distributed as
    \(\vecGn^{s}\).  Moreover, under~\autoref{cond:main}, the probability that $\vecGn$ is a
    simple digraph is bounded away from $0$,
    see~\cite{blanchet2013characterizing,janson2009probability}. Thus results that hold whp
    (with high probability) for $\vecGn$  also hold whp for \(\vecGn^{s}\).
\end{rem}

\begin{rem}
Note that removing nodes of degree \( (0,0)\) does not change the diameter of a digraph. Nonetheless, \autoref{cond:main} allows \(n_{0,0} > 0\) and \(\lambda_{0,0}>0\).
\end{rem}

An important parameter of \(D\) which governs the limit behaviour of \(\vecGn\) is
\begin{equation}\label{eq:supercri}
\nu
\coloneqq
\frac{\dsE[D^-D^+]}{\lambda}
.
\end{equation}
Note that by conditions (ii) and (iii), \(\nu<\infty\).

Cooper and Frieze~\cite{cooper2004} proved that the phase transition for the existence of a giant
strongly connected component is at $\nu=1$. Their result holds under assumptions stronger 
than~\autoref{cond:main}, including $\Delta_n\leq n^{1/12}/\log{n}$. The condition on the maximum
degree was relaxed by Graf to $\Delta_n= o(n^{1/4})$~\cite{graf2016strongly}. Throughout the paper
we assume that \(\nu>0\) and \(\nu\neq 1\).

Before stating our main result, we need to introduce two additional parameters. Let $f(z,w)$ be the bivariate
generating function of $D$. Let $s_{-}$ and $s_{+}$ be the survival probabilities of the branching
processes with offspring distributions having generating functions $\frac{1}{\lambda}\frac{\partial
f}{\partial w}(z,1)$ and  $\frac{1}{\lambda}\frac{\partial f}{\partial z}(1,w)$ respectively. Then,
we define
\begin{equation}\label{eq:hnu}
\hnu_-\coloneqq
\frac{1}{\lambda}
\frac{\partial^2 f}{\partial z\partial w} (1-s_{-},1),
\qquad
\hnu_+\coloneqq
\frac{1}{\lambda}
\frac{\partial^2 f}{\partial z\partial w}(1,1-s_{+})
,
\end{equation}
which satisfy $\hnu_-,\hnu_+\in [0,1)$.

Let the diameter of \(\vecGn\) be
\begin{equation}
    \diam(\vecGn) \coloneqq
    \max_{
        \substack{i,j \in [n]}
    }
   \{ \dist(i,j): \,  \dist(i,j)<\infty\}
    .
\end{equation}
Our main result is the following:
\begin{thm}\label{thm:diam}
    Suppose that $(\vbfd_n)_{n \ge 1}$ satisfies~\autoref{cond:main}. 
    \begin{enumerate}[\normalfont(i)]
        \item The supercritical case: 
            If \(\nu > 1\), then 
            \begin{equation}\label{eq:diam}
                \frac{
                    \diam(\vecGn)
                }{
                \log n}
                \to
                \frac{1}{\log (1/\hnu_{+})}
                +
                \frac{1}{\log (1/\hnu_{-})}
                +        
                \frac{1}{\log \nu}
                ,
            \end{equation}
            in probability, where we use the convention that \(1/\log(1/0)=0\).
        \item The subcritical case: If \(0<\nu<1\), then
            \begin{equation}\label{eq:diam:sub}
                \frac{
                    \diam(\vecGn)
                }{
                \log n}
                \to
                \frac{1}{\log (1/\nu)}
                ,
            \end{equation}
            in probability.
    \end{enumerate}
\end{thm}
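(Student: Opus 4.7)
The plan is to prove the theorem via the standard three-layer decomposition for diameters of supercritical random graphs with branching-process heuristics. I view the extremal directed path in $\vecGn$ as the concatenation of (a) a long thin in-tree of depth approximately $\log n/\log(1/\hnu_-)$ that feeds into the giant strongly connected component (SCC), (b) a typical shortest directed path of length approximately $\log n/\log\nu$ inside the SCC, and (c) a long thin out-tree of depth approximately $\log n/\log(1/\hnu_+)$ out of the SCC. The central tool is the coupling of forward and backward BFS explorations in $\vecGn$ with a two-type Galton--Watson process whose mean matrix has Perron eigenvalue $\nu$ and whose offspring generating functions are $\frac{1}{\lambda}\frac{\partial f}{\partial z}(1,w)$ and $\frac{1}{\lambda}\frac{\partial f}{\partial w}(z,1)$, surviving respectively with probabilities $s_+$ and $s_-$. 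Conditional on extinction, each of these is distributed as a subcritical branching process with mean $\hnu_+$ (respectively $\hnu_-$), by the classical conjugate-process construction; this is how $\hnu_\pm$ enters the diameter formula.

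For the upper bound in the supercritical case I would first show, via a first-moment path count, that whp every vertex $u$ either has no directed out-path to the SCC, or has a forward BFS reaching size $n^{1/2}$ within $(1+\eps)\log n/\log(1/\hnu_+)$ generations. The proportion of vertices still at forward-distance at least $k$ from the SCC concentrates around $\hnu_+^k$, which pins down this threshold. Once a BFS cluster reaches polynomial size it grows by a factor $\nu$ per generation, so in an additional $(1+\eps)\log n/(2\log\nu)$ steps it covers $\Omega(n)$ vertices. A symmetric statement for backward BFS, together with a sprinkling argument on the unmatched half-edges, guarantees that the linear-sized forward and backward clusters of any two vertices intersect whp, giving the claimed upper bound.

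For the lower bound I would compute by first and second moments that the number of vertices $u$ admitting a ``purely dying'' forward subtree of depth $k$ (all branches of which would extinguish under the dual subcritical measure) is $\Theta(n \hnu_+^k)$. Taking $k=(1-\eps)\log n/\log(1/\hnu_+)$ makes this count tend to infinity, so whp such a $u$ exists; symmetrically I obtain a vertex $w$ with a dying backward subtree of depth $(1-\eps)\log n/\log(1/\hnu_-)$. A standard typical-distance lower bound inside the SCC (again by a first-moment count of directed paths of length $k$ between two fixed SCC vertices) produces a further $(1-\eps)\log n/\log\nu$ contribution, and stitching these three pieces together along the interface between the thin subtrees and the SCC yields the lower bound. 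The subcritical case $\nu<1$ reduces directly to the maximum depth of $n$ correlated subcritical BFS trees, whose depth concentrates at $\log n/\log(1/\nu)$ by a first/second-moment path argument applied to the primal process itself.

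The principal technical difficulty I anticipate is sustaining the Galton--Watson coupling down to depth $(1+o(1))\log n/\log(1/\hnu_\pm)$: at this depth only a subpolynomial fraction of half-edges has been consumed, so depletion is a minor issue, but the heavy tails permitted by \autoref{cond:main} (only second moments are controlled, so $\Delta_n$ may grow as fast as $\sqrt n$) can spawn rare anomalously large offspring that disrupt the conjugate subcritical picture. A truncation of high-degree vertices combined with a careful conditioning on the bulk degree sequence should suffice, but making this truncation consistent across the sprinkling step that glues the three segments without double-counting half-edges will be the main technical task.
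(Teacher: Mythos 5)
Your high-level heuristic (thin out-tree, core crossing, thin in-tree), the role of the conjugate distributions with means $\hnu_\pm$, and the final formula all match the paper, so the conceptual picture is sound. The genuine gap is that both your upper and lower bounds are routed through the giant strongly connected component: you classify vertices by whether they have a directed path to the SCC, you speak of BFS clusters ``reaching the SCC'', and you place the typical-distance segment ``inside the SCC''. Under~\autoref{cond:main} alone, the size, degree distribution, and local expansion inside the SCC of $\vecGn$ are not established; the paper explicitly notes that this is precisely why the Fernholz--Ramachandran argument for the undirected diameter (which conditions on the 2-core) cannot be transferred to the directed setting. Any argument that conditions on ``reaching the SCC'' stalls here. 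The paper avoids the SCC entirely: it works directly with the event that the edge-neighbourhoods $\abs{\cN_r(e^{\pm})}$ stay in $(0,\omega)$ for all $r\leq t$ with $\omega=\log^6 n$, shows this has probability $\hnu_\pm^{(1+o(1))t}$ via a generalization of Riordan--Wormald's Lemma 2.1 (\autoref{thm:BP}), and then glues the resulting $\omega$-sized sets of half-edges to each other via path counting (lower bound) and bounded-degree expansion (upper bound) in~\autoref{prop:connect}, neither of which needs SCC structure.

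Two lesser points. Your ``two-type Galton--Watson'' framing is off: the paper couples the forward exploration with a single-type process with offspring distribution $D_{\tin}^+$ and, independently, the backward exploration with a single-type process with offspring $D_{\tout}^-$; each has scalar mean $\nu$ and no mean matrix appears. And for the lower bound, your first/second-moment count of vertices carrying dying subtrees would have to control the correlation between thin trees rooted at different vertices (which may share edges). The paper sidesteps this with a sequential exploration-restart argument (proof of~\autoref{lem:tower:1}): pick the smallest unpaired tail, expose at most $O(\omega\log n)$ edges, and if the trial fails, restart while conditioning on the accumulated partial pairing $H$; the coupling in~\autoref{lem:eq_T_GW} is designed to remain valid under this conditioning whenever $\abs{\cV(H)}\leq n^{1-\delta}$. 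This is cleaner than a second-moment argument, which would probably also work but with substantially more bookkeeping of covariances.
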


In the supercritical case \eqref{eq:diam}, there are three terms that contribute to the diameter. The first term is given by vertices whose out-neighbourhoods neither expand nor die for many steps (\emph{thin
out-neighbourhoods}), and the second one is the analogue for in-neighbourhoods (\emph{thin
in-neighbourhoods}). Due to the symmetry in~\eqref{eq:supercri}, the typical expansion rate of the
in- and out-neighbourhoods of a vertex is the same. However, conditioned on the rare event of
``having a thin neighbourhood'', the expansion rate is different (see~\autoref{sec:app} for some
particular examples).
The last term in~\eqref{eq:diam} corresponds to the typical distance between a
thin in- and out-neighbourhood. The case $\p{D^+=0}=0$ is of particular interest (and similarly for
$\p{D^-=0}=0$). If additionally $\p{D^+=1}=0$, then almost all vertices in $\vbfd_n$ have out-degree at least $2$
and there are no thin out-neighbourhoods, so $\hnu_+=0$ and the first term in~\eqref{eq:diam} disappears.
Otherwise $\p{D^+=1}>0$, one can check that $\hnu_{+}=\frac{1}{\lambda}\p{D^+=1}$ and the thin out-neighbourhoods are directed
paths.

In the subcritical case, we have \(\hnu_{+}=\hnu_{-}=\nu\).  In other words, thin in- and
out-neighbourhoods of length  \(\log_{1/\hnu_{\pm}} n\) still exist whp, but instead of
expanding to large size they die before intersecting each others.  Thus there is only one term in
\eqref{eq:diam:sub}, which comes from both long in- and out-neighbourhoods.

The proof of~\autoref{thm:diam} is based on the analysis of a BFS (Breadth First Search)
edge-exploration process of the
out-neighbourhoods of a given tail in $\vecGn$ (and similarly for the in-neighbourhoods of heads)
and its coupling with the corresponding branching process. Convergence in \autoref{cond:main} is
usually required in this setting and is necessary to ensure that we can couple the exploration
process with a Galton-Watson tree with offspring obtained from $D$. It would be interesting to see
if one could drop the condition \(\nu < \infty\), as in the case of the undirected configuration
model~\cite{fernholz2007diameter} (see~\autoref{sec:related}).

We make no assumption on the rate of convergence in~\autoref{cond:main}, thus, we cannot determine
the second order term of  \(\diam(\vecGn)\). Under explicit convergence rate assumptions, it might be possible to find the second order term, as
in~\cite{caputo2019a,riordan2010}.

\subsection{Previous results on distances in configuration models}\label{sec:related}

We first discuss the previous results obtained for the undirected configuration model $\Gn$ with
degree sequence $\bfd_n = (d_{1},\dots,d_{n})$.
Bollob\'as and Fernandez de la Vega~\cite{bollobas1982diameter} determined the asymptotic diameter
of random regular graphs; that is, the case where $\bfd_n$ contains only a constant. Fernholz and
Ramachandran \cite{fernholz2007diameter} obtained an asymptotic expression
for the diameter of $\Gn$.

To state the result in \cite{fernholz2007diameter}, some notation is needed: Let \({D}_{n}\)
    be chosen uniformly at random from \(\bfd_{n}\).  Let \(D\) be a discrete random variable on
    \(\dsZ_{\ge 0}\) with
distribution $\lambda_k\coloneqq \p{D=k}$. Let \(n_{k}\) be the number of \(k\) in \(\bfd_{n}\).  
Let \(D^{*}\) be a random variable on \(\dsZ_{\ge 0}\) with distribution 
\(\lambda_{k}^{*} \coloneqq \p{D^{*}=k} = (k+1)\lambda_{k+1}/\E{D}\); \(D^{*}\) is the \emph{size-biased} distribution of \(D\). 
Let \(\hat{D}\) be the \emph{conjugate} of \(D^*\) (see \autoref{sec:kesten}) and let
\(\hat{\nu}=\ee{\hat{D}}\).

\begin{rem}
    The size-biased distribution of \(D\) is sometimes defined as
        \(D^{s}\) with \(\pp{D^{s} = k} = k \lambda_k /\ee{D}\). Note that $D^s=D^*+1$. We use $D^*$ for the sake of convenience.
\end{rem}

\begin{thm}\label{thm:diam:und}
    Assume that $D_n\to D$ in distribution, first and second
    moment, $\E{D}<\infty$ and $\lambda_1>0$, and that $\nu\coloneqq \E{{D}^2}/\E{D}{>1}$. 
    Then 
    \begin{equation}
        \frac{
            \diam(\Gn)
        }{
        \log n}
        \to
        \frac{2}{\log (1/\hnu)}
		+        
        \frac{1}{\log \nu}
        ,
    \end{equation}
    in probability.
\end{thm}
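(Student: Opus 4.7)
The plan is to prove matching upper and lower bounds for $\diam(\Gn)/\log n$ via a BFS exploration / Galton--Watson coupling. Fix a vertex $u$ and run the edge-revealing BFS from $u$ in $\Gn$; under the second-moment assumption of~\autoref{cond:main} this can be coupled (until any clash of half-edges) with a Galton--Watson tree whose root has offspring $D$ and whose non-root vertices have offspring $D^*$. A union bound on half-edge clashes makes the coupling faithful as long as $o(\sqrt{n})$ half-edges have been revealed.

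For the \emph{typical-distance} contribution $1/\log\nu$, I would exploit that in the supercritical regime the size of generation $k$ of this GW tree, conditioned on survival, is $\Theta(\nu^k)$ with high probability. Running BFS simultaneously from two independent starts $u,v$ to depth $\tfrac{1}{2}(1+\epsilon)\log_{\nu} n$ produces two trees of size $n^{1/2 + \epsilon'}$, which then collide whp by a standard birthday/second-moment argument. Hence any pair of vertices whose BFS survives that long is at distance $(1+o(1))\log_{\nu} n$.

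For the \emph{thin-filament} contribution $2/\log(1/\hnu)$, I would argue that diameter-achieving pairs have atypically slow BFS growth at both ends. Kesten's conjugate-tree decomposition (see~\autoref{sec:kesten}) identifies the GW tree conditioned on extinction with a GW tree of offspring distribution $\hD$ and mean $\hnu<1$. Thus the probability that a given vertex's BFS survives $t$ generations without exploding into the bulk decays like $\hnu^t$, up to polynomial corrections. Setting $t = (1-\epsilon)\log_{1/\hnu} n$, a first-moment computation shows whp existence of one vertex at each "end" whose BFS survives $t$ steps thinly \emph{before} reconnecting with the giant. Concatenating two such filaments through a typical-distance path of length $(1+o(1))\log_{\nu} n$ yields the lower bound $2t + (1+o(1))\log_{\nu} n$. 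For the matching upper bound, set $t' = (1+\epsilon)\log_{1/\hnu} n$ and note $\hnu^{t'} = n^{-(1+\epsilon)}$; a union bound over the $n$ starting vertices then rules out any BFS surviving thinly for longer than $t'$ generations.

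The main obstacle is to pin down the sharp constant for thin filaments. Computing the exponential rate $\hnu^t$ from Kesten duality is routine, but the thin BFS must actually \emph{attach} to the bulk, since otherwise it simply dies and contributes nothing to the diameter. This calls for a two-phase analysis: a conjugate-BP estimate for the first $t$ generations, followed by a supercritical BP estimate for the continuation, together with an independence / Markov argument to multiply the two survival probabilities. A secondary technical difficulty is that~\autoref{cond:main} imposes no rate of convergence of $D_n$ to $D$, so all tail estimates must be carried through using uniform integrability of $D_n^2$ rather than explicit tail bounds, which complicates the truncation needed to couple $D^*$ with its $D_n^*$ analogue when aggregating across the $n$ candidate starting vertices.
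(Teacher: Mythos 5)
The paper does not prove \autoref{thm:diam:und}; it quotes the statement from Fernholz and Ramachandran~\cite{fernholz2007diameter} as background, and, immediately after stating it, explicitly contrasts their method with its own: the Fernholz--Ramachandran argument first passes to the $2$-core of $\Gn$ (on which thin neighbourhoods are literally maximal paths of degree-$2$ vertices) and relies on known asymptotics for the size and degree distribution of the $2$-core, then reattaches the pendant trees. Your sketch never invokes the $2$-core; you run a direct BFS/Galton--Watson coupling on the whole graph, split the diameter into a ``thin filament'' contribution governed by the conjugate distribution $\hD$ and a ``typical distance'' contribution governed by $\nu$, and use Kesten duality plus a two-phase (thin-then-expanding) exploration. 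That is a genuinely different route from the cited proof, and in fact it is exactly the route the present paper itself takes for the \emph{directed} analogue (\autoref{thm:diam}), precisely because the $2$-core is poorly understood in the directed setting: the $2$-core reduction is replaced by a sharp large-deviation estimate for thin supercritical branching processes (\autoref{thm:BP}, generalizing Riordan--Wormald), a coupling lemma between the conditional exploration and perturbed offspring distributions $Q_n^{\uparrow}, Q_n^{\downarrow}$ (\autoref{lem:eq_T_GW}), a thin-neighbourhood existence argument (Lemmas \autoref{lem:tower:0}--\autoref{lem:tower:1}), and a path-counting/bounded-expansion bound for typical distance (\autoref{prop:connect}). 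You have independently identified this architecture, including the crucial ``two-phase'' obstacle that the thin filament must actually attach to the expanding bulk rather than dying; the paper's $B_2(e^\pm)$ events and the split into ``conjugate phase up to $t_\omega$'' then ``supercritical phase from the frontier $\cN_{t_\omega}$'' is exactly the device you describe.

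Two places where your sketch is looser than what is needed. First, you say a ``first-moment computation shows whp existence'' of a thin filament of length $(1-\eps)\log_{1/\hnu} n$; a first moment alone does not give whp existence. The paper's analogue (\autoref{lem:tower:1}) instead explores sequentially from fresh half-edges, conditioning on the already-revealed partial pairing $H$, and shows via \autoref{lem:tower:0} that each trial succeeds with probability at least $n^{-1+\delta/2}$ uniformly in $H$ as long as $|\cV(H)| \le n^{1-\delta/5}$, so that $n^{1-\delta/4}$ trials succeed whp. A bare second-moment bound is possible but requires careful control of overlaps between candidate filaments; the sequential re-exploration sidesteps this. Second, for the upper bound you union-bound $\hnu^{t'} = n^{-(1+\eps)}$ over the $n$ starting vertices; in the paper this must be a \emph{uniform} bound $\p{A(e^\pm,t')\mid E_H} \le \hnu^{(1+o(1))t'}$ over half-edges and over the revealed history, which is exactly where the perturbed distributions $Q_n^{\uparrow}$, $Q_n^{\downarrow}$ enter: they sandwich the conditional offspring distribution and absorb the $o(1)$ drift coming from the lack of a convergence rate in \autoref{cond:main}. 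Your instinct to handle this via uniform integrability of $D_n^2$ is the right one; the paper's concrete device is the sandwich together with \autoref{lem:s} controlling how many half-edges a small vertex set can carry.
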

The case $\lambda_1=0$ is discussed in~\cite[Theorem 7.16]{vanderhofstad2020}. 
Under
the extra conditions $n_1=0$ when $\lambda_1=0$ and $n_2=0$ when $\lambda_2=0$, \autoref{thm:diam:und}
extends to
\begin{equation}\label{eq:diam:und}
    \frac{
        \diam(\Gn)
    }{
    \log n}
    \to
    \frac{2\cdot\mathbb{1}[\,\lambda_1> 0]}{\log (1/\hnu)}
    +   
        \frac{\mathbb{1}[\,\lambda_1=0,\, \lambda_2> 0]}{\log (1/\lambda_1^*)}
        +     
    \frac{1}{\log \nu}
    ,
\end{equation}
in probability.
\autoref{thm:diam}~(i) can be seen as the directed analogue of~\eqref{eq:diam:und}. The main difference is that, in the directed case, the first term in~\eqref{eq:diam:und} splits into two (corresponding to thin in- and out-neighbourhoods which behave differently), and that there is no exceptional behaviour in the case $\lambda_1=0, \lambda_2>0$. The proof of~\autoref{thm:diam} draws
similarities with the proof of~\autoref{thm:diam:und}, based on the analysis of a BFS exploration process.
However, the proof of~\autoref{thm:diam:und} restricts the exploration to the $2$-core of $\Gn$ and thus, it heavily relies on previous understanding of the size and degree distribution of the $2$-core, which are not known in the directed setting.
    
Refinements of~\autoref{thm:diam:und} have been obtained for particular degree sequences. For
example, Riordan and Wormald~\cite{riordan2010} proved that for every $c>1$ there exists $\eta_c>0$
such that the binomial random graph $\dsG(n,p)$ satisfies $\diam(\dsG(n,c/n))= \eta_c \log n+O(1)$
whp. (In this case $D$ is distributed as a Poisson with mean $c$.) We will use some
of the ideas introduced in~\cite{riordan2010}, to analyse the exploration process on \(\vecGn\)
without taking into consideration its core.
    
For degree sequences with $\ee{D^2}=\infty$ and $\lambda_1=\lambda_2=0$,~\autoref{thm:diam:und} implies the weak upper bound on the
diameter $o(\log n)$. In the particular case of power-law distributions with exponent $\tau\in (2,3)$ and
provided that the minimum degree is at least $3$, van der Hofstad  showed that the diameter is of
order $\log\log n$~\cite[Theorem 7.17]{vanderhofstad2020}.

Recently, there has been some progress on the diameter of the supercritical directed configuration
model. Caputo and Quattropani~\cite{caputo2019a} determined the asymptotic behaviour of the diameter
of $\vecGn$ provided that $2\leq d_i^-,d_i^+=O(1)$. One of the motivations to study the diameter of
directed random graphs is its close connection to the properties of a random walk on it. For
instance, in~\cite{caputo2019a} the authors used their results on neighbourhood expansion to
determine the extremal values for the stationary distribution of a random walk in $\vecGn$, with
implications on its cover time. Typical values of the stationary distribution were previously
obtained by Bordenave, Caputo and Salez~\cite{bordenave2016,bordenave2018} as an intermediate step
to bound the mixing time of the random walk. Finally, typical distances in $\vecGn$ have been
recently studied by van der Hoorn and Olvera-Cravioto~\cite{hoorn2018}. We are not aware of any
result in the subcritical regime.

A related model is the $d$-out random digraph. In this model, each node is given a set of $d$
out-edges that connect independently to other vertices. This model is of particular interest since
it provides a way to study random Deterministic Finite Automata. Penrose studied the emergence of a
linear order strongly connected component~\cite{penrose2016strong} and its diameter was determined
by Addario-Berry, Balle and the second author~\cite{addario2015diameter}. In \cite{cai2017b}, the
first author and Devroye studied the diameter outside the giant strongly connected component and other properties of the \(d\)-out model.

\autoref{thm:diam} implies the results on the asymptotic behaviour of the diameter previously obtained in~\cite{addario2015diameter,caputo2019a}.

\subsection{Organisation of the paper}

The paper is organised as follows. In~\autoref{sec:small:nodes} we obtain results on the number of
edges incident to small sets of nodes. In~\autoref{sec:branching} we study rare events in branching
processes. ~\autoref{sec:size:bias} introduces the in- and out-size-biased distributions. We present
an edge-BFS-exploration process in~\autoref{sec:coupl} and couple it with the corresponding branching
process. In~\autoref{sec:tower} we find thin in- and out-neighbourhoods with large depth that will
give rise to the first two terms in~\eqref{eq:diam} and the term in~\eqref{eq:diam:sub}. Typical distances between large sets of
edges are studied in~\autoref{sec:typ:dist}, giving rise to the last term in~\eqref{eq:diam}. \autoref{thm:diam} is proved in~\autoref{sec:final}. Finally, in \autoref{sec:app}, we
present some applications.

\section{Small sets of nodes}\label{sec:small:nodes}

At various stages of our proof, we will need the fact that~\autoref{cond:main} implies that any small set of nodes is incident to a small number
of half-edges. We state this formally in \autoref{lem:s}.

{Let \(X\) and \(Y\) be random variables. We say that \(X\) is \emph{stochastically
dominated} by \(Y\) if \(\p{X \geq z } \le \p{Y\geq z}\) for all \(z \in \dsR\), and we denote it by \(X \le_{\text{st}} Y\).}
First we need the following simple statement whose simple proof we omit.
\begin{lemma}\label{lem:dct}
Let \(X_n\geq 0\) and \(Y_n\geq 0\) be two sequences of
random variables such that \(X_{n} \inlaw X\) and \(Y_{n} \inlaw Y\). Assume that
\(X_n\leq _{\text{st}}Y_n\)
and \(\mathbb{E}[Y_n]\to \mathbb{E}[Y]\).
Then \( \E{X_{n}} \to \E{X} \).
\end{lemma}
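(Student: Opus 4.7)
The plan is to exploit the tail-integral representation $\mathbb{E}[Z] = \int_0^\infty \p{Z \geq t}\, dt$ (valid for any nonnegative random variable $Z$) and to establish the two inequalities $\liminf_n \mathbb{E}[X_n] \geq \mathbb{E}[X]$ and $\limsup_n \mathbb{E}[X_n] \leq \mathbb{E}[X]$ by two separate applications of Fatou's lemma.

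As a preliminary step I would invoke the Portmanteau theorem: from $X_n \inlaw X$ and $Y_n \inlaw Y$ it follows that the survival functions satisfy $\p{X_n \geq t} \to \p{X \geq t}$ and $\p{Y_n \geq t} \to \p{Y \geq t}$ at every continuity point of $F_X$ (resp.\ $F_Y$), hence for Lebesgue-almost every $t \geq 0$. Passing to the limit in the stochastic-dominance inequality $\p{X_n \geq t} \leq \p{Y_n \geq t}$ along such $t$ gives $\p{X \geq t} \leq \p{Y \geq t}$ a.e., so that $\mathbb{E}[X] \leq \mathbb{E}[Y] < \infty$ and all integrals below are finite.

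For the lower bound, Fatou's lemma applied to the nonnegative integrands $t \mapsto \p{X_n \geq t}$ yields
\[
\mathbb{E}[X] \;=\; \int_0^\infty \p{X \geq t}\, dt \;\leq\; \liminf_{n\to\infty} \int_0^\infty \p{X_n \geq t}\, dt \;=\; \liminf_{n\to\infty} \mathbb{E}[X_n].
\]
For the upper bound, stochastic dominance makes $t \mapsto \p{Y_n \geq t} - \p{X_n \geq t}$ nonnegative, so a second application of Fatou produces
\[
\mathbb{E}[Y] - \mathbb{E}[X] \;\leq\; \liminf_{n\to\infty} \bigl(\mathbb{E}[Y_n] - \mathbb{E}[X_n]\bigr) \;=\; \mathbb{E}[Y] - \limsup_{n\to\infty} \mathbb{E}[X_n],
\]
where the last equality uses the hypothesis $\mathbb{E}[Y_n] \to \mathbb{E}[Y] < \infty$. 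Rearranging gives $\limsup_n \mathbb{E}[X_n] \leq \mathbb{E}[X]$, which together with the $\liminf$ bound yields $\mathbb{E}[X_n] \to \mathbb{E}[X]$.

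The argument is essentially routine; the only mild subtlety is the continuity-point caveat in Portmanteau, but this is harmless since $F_X$ and $F_Y$ each have at most countably many discontinuities, so the pointwise convergence of the survival functions holds Lebesgue-almost everywhere, which is exactly what Fatou requires. Equivalently, one could observe that $\mathbb{E}[Y_n]\to\mathbb{E}[Y]$ with $Y_n\inlaw Y$ implies that $\{Y_n\}$ is uniformly integrable, hence so is the dominated family $\{X_n\}$, and invoke the standard fact that convergence in distribution plus uniform integrability implies convergence of expectations.
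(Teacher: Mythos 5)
The paper states this lemma but explicitly omits the proof (``whose simple proof we omit''), so there is nothing in the source to compare against. Your argument is correct and is one of the standard routes: the tail-integral formula $\mathbb{E}[Z]=\int_0^\infty \p{Z\geq t}\,dt$ for nonnegative $Z$, Portmanteau giving a.e.\ pointwise convergence of the survival functions, Fatou for the lower bound, and Fatou applied to the nonnegative difference $\p{Y_n\geq t}-\p{X_n\geq t}$ (using $\mathbb{E}[Y_n]\to\mathbb{E}[Y]<\infty$ to rearrange) for the upper bound. Two small remarks: (a) you should make explicit that the lemma tacitly requires $\mathbb{E}[Y]<\infty$ --- otherwise the conclusion can fail (e.g.\ $X_n=n\,\mathbb{1}[U<1/n]$ dominated by $Y_n$ with $\mathbb{E}[Y_n]=\infty$ for all $n$); in the paper's application $\mathbb{E}[Y]=\mathbb{E}[D^-D^+]<\infty$ holds by Condition~\ref{cond:main}(iii), so this is consistent with the intended use; (b) it is slightly cleaner to work with $\p{Z>t}$ rather than $\p{Z\geq t}$, since $\p{X_n>t}\to\p{X>t}$ at continuity points of $F_X$ is immediate from Portmanteau without the extra sandwiching step, and the two versions of the tail integral agree because they differ only on a countable set. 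Your closing alternative --- deduce uniform integrability of $\{Y_n\}$ from $Y_n\inlaw Y$ and $\mathbb{E}[Y_n]\to\mathbb{E}[Y]<\infty$, transfer UI to $\{X_n\}$ via stochastic dominance, and conclude --- is equally valid and perhaps the shorter path.
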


For \(\cS \subseteq [n]\), let
\begin{equation}
    d_{\cS}(i,j) \coloneqq  \sum_{v \in \cS} (d_v^-)^{i} (d_v^+)^{j}
    .
\end{equation}
\begin{lemma}\label{lem:s}
{Assume \autoref{cond:main}}. Let \(s_n=o(n)\) be a sequence of numbers. Then uniformly for all
\(\cS\) with \(\abs{\cS} \le s_{n}\),
\begin{equation}\label{eq:nuI}
    d_{\cS}(1,1)
    =o(n)
    ,
\end{equation}
and
\begin{equation}\label{eq:nuI:1}
    d_{\cS}(1,0)
    =o(\sqrt{s_{n}n })
    ,
    \quad
    d_{\cS}(0,1)
    =o(\sqrt{s_{n}n })
    .
\end{equation}
\end{lemma}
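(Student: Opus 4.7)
The plan is to first establish \eqref{eq:nuI} via a uniform-integrability argument that leverages \autoref{lem:dct}, and then deduce \eqref{eq:nuI:1} by combining the Cauchy--Schwarz inequality with an analogue of \eqref{eq:nuI} applied to $(d_v^-)^2$ and $(d_v^+)^2$ (invoking \autoref{cond:main}(iii)(b) and (iii)(c) in place of (iii)(a)).

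For \eqref{eq:nuI}, I would first reduce to the worst case: $d_{\cS}(1,1)$ is maximized over $|\cS|\le s_n$ by the set of $s_n$ vertices with the largest values of $d_v^- d_v^+$. For any constant $M>0$, splitting the sum according to whether $d_v^- d_v^+$ exceeds $M$ yields
\begin{equation}
    d_{\cS}(1,1) \;\le\; M s_n \;+\; n\cdot \dsE\bigl[D_n^- D_n^+ \mathbb{1}[D_n^- D_n^+ > M]\bigr].
\end{equation}
The key step is to apply \autoref{lem:dct} to the truncated sequence $X_n \coloneqq D_n^- D_n^+ \mathbb{1}[D_n^- D_n^+ > M]$: it is pointwise (hence stochastically) dominated by $Y_n\coloneqq D_n^-D_n^+$, whose expectation converges to $\dsE[D^-D^+]<\infty$ by \autoref{cond:main}(iii)(a), and $X_n \inlaw D^- D^+ \mathbb{1}[D^- D^+ > M]$ by the continuous mapping theorem (taking $M$ to be a continuity point of the law of $D^-D^+$, e.g.\ any non-integer). \autoref{lem:dct} then gives convergence of the expectations, and since $\dsE[D^- D^+]<\infty$ the limit $\dsE[D^- D^+ \mathbb{1}[D^- D^+ > M]]$ tends to $0$ as $M\to\infty$. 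For any $\eps>0$ we can thus first pick $M$ to bound the second summand by $\eps n$ for large $n$, and then the first summand is $M s_n = o(n)$ since $M$ is fixed. As $\eps$ is arbitrary, $d_{\cS}(1,1)=o(n)$.

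For \eqref{eq:nuI:1}, the Cauchy--Schwarz inequality gives $d_{\cS}(1,0) \le \sqrt{|\cS|\cdot d_{\cS}(2,0)}$, so it suffices to show $d_{\cS}(2,0)=o(n)$ uniformly over $|\cS|\le s_n$. This is obtained by repeating the previous argument verbatim with $d_v^- d_v^+$ replaced by $(d_v^-)^2$, now using \autoref{cond:main}(iii)(c). Plugging in gives $d_{\cS}(1,0)\le \sqrt{s_n\cdot o(n)} = o(\sqrt{s_n n})$. The bound on $d_{\cS}(0,1)$ is symmetric, using \autoref{cond:main}(iii)(b). The only real (and mild) technical hurdle is arranging the continuity-point requirement so that \autoref{lem:dct} applies cleanly to the truncated variables, which is trivial here because all the limiting laws are supported on $\dsZ_{\ge 0}$ and a non-integer threshold always works.
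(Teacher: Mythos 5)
Your proof is correct, and the Cauchy--Schwarz step for \eqref{eq:nuI:1} coincides with the paper's. For \eqref{eq:nuI}, however, you take a genuinely different route. The paper introduces a coupled variable $D_{n,*}$ that agrees with $D_n$ off $\cS$ and is set to $(0,0)$ on $\cS$, notes $\p{D_{n,*}\ne D_n}\le s_n/n=o(1)$ so $D_{n,*}^-D_{n,*}^+\inlaw D^-D^+$, applies \autoref{lem:dct} with $X_n=D_{n,*}^-D_{n,*}^+$ and $Y_n=D_n^-D_n^+$ to get $\ee{D_{n,*}^-D_{n,*}^+}\to\E{D^-D^+}$, and then reads off $d_{\cS}(1,1)=n\,\ee{D_n^-D_n^+-D_{n,*}^-D_{n,*}^+}=o(n)$. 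You instead truncate at a level $M$: the small terms contribute at most $Ms_n=o(n)$, and the large terms are absorbed into $n\,\ee{D_n^-D_n^+\mathbb{1}[D_n^-D_n^+>M]}$, which you control by applying \autoref{lem:dct} to the truncated variables and then sending $M\to\infty$. Both arguments are sound and both hinge on \autoref{lem:dct}, merely applied to different pairs $(X_n,Y_n)$. The paper's coupling is slightly shorter; your truncation has the merit of producing a bound $Ms_n+n\,\ee{D_n^-D_n^+\mathbb{1}[D_n^-D_n^+>M]}$ that visibly does not depend on $\cS$ at all (only on $s_n$), so uniformity over $\cS$ is immediate without further comment, and it makes transparent that the statement is essentially a uniform-integrability fact about the sequence $D_n^-D_n^+$. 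Your remark about choosing a non-integer truncation level $M$ so the continuous mapping theorem applies is a correct (and necessary) technical point. The initial sentence about reducing to the extremal $\cS$ is harmless but not actually used, since your split argument already works for arbitrary $\cS$ with $|\cS|\le s_n$.
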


\begin{proof}
    We first show \eqref{eq:nuI} by a coupling argument.  Recall that
    \(D_{n}=(d_{U}^{-},d_{U}^{+})\) where \(U\) is a uniform random variable on \([n]\).  Define a
    random variable \(D_{n,*}\) on \(\Z_{\geq 0}^2\) by \(D_{n,*} = D_{n}\) if \(U \notin \cS\) and
    \(D_{n,*}={(0,0)}\) otherwise.  Then 
    \begin{equation}
        \p{D_{n,*} \ne D_{n}} \le \frac{{s_{n}}}{n} = o(1).
    \end{equation}
    By \autoref{cond:main}, the above implies that \(D_{n,*} \inlaw D\) and \(D_{n,*}^{-}D_{n,*}^{+}
    \inlaw D^{-}D^{+}\).  Note that our coupling ensures \(D_{n,*}^{-} D_{n,*}^{+}\le_{\text{st}}
    D_{n}^{-} D_{n}^{+}\) which converges in mean and in distribution to \(D^{-}D^{+}\).  Thus it
    follows from \autoref{lem:dct} that \({\ee{D_{n,*}^{-}D_{n,*}^{+}}} \to \E{D^{-}D^{+}}\) by
    taking \(X_n=D_{n,*}^{-} D_{n,*}^{+}\) and \(Y_{n}=D_{n}^{-}D_{n}^{+}\).  Therefore,
    \begin{equation}
        d_{\cS}(1,1)
        =
         \sum_{v\in \cS} d_v^+ d_v^-
        =
         \sum_{v\in [n]} d_v^+ d_v^- - \sum_{v\notin \cS} d_v^+ d_v^-
        =
        n\ee{D_{n}^{-}D_{n}^{+}-D_{n,*}^{-}D_{n,*}^{+}}
        =
        o(n)
        .
    \end{equation}

    For \eqref{eq:nuI:1}, first note that \autoref{lem:dct} together with \autoref{cond:main}
    also implies that \(\e[(D_{n,*}^{-})^{2}] \to \e[(D^{-})^{2}]\).
    Thus
    \begin{equation}
        \sum_{v \in \cS}
        (d_{v}^{-})^{2}
        =
        \sum_{v \in [n]}
        (d_{v}^{-})^{2}
        -
        \sum_{v \notin \cS}
        (d_{v}^{-})^{2}
        =
        n \ee{({D_n^-})^{2}-(D_{n,*}^{-})^{2}}
        =
        o(n)
        .
    \end{equation}
    It follows from Cauchy-Schwarz inequality that
    \begin{equation}
        d_{\cS}(1,0)
        =
        \sum_{v \in \cS}
        d_{v}^{-}
        \le
        \sqrt{
            \Bigl(
                \sum_{v \in \cS}
                1
            \Bigr)
            \Bigl(
                \sum_{v \in \cS}
                (d_{v}^{-})^{2}
            \Bigr)
        }
        =
        o \left(\sqrt{s_{n} n}  \right)
        .
    \end{equation}
    The same argument works for \(d_{\cS}(0,1)\).
\end{proof}

For \(\cI \subseteq [n]\), define
    \begin{equation}\label{eq:nu:I}
        \nu_{\cI} \coloneqq 
        \frac{d_{\cI}(1,1)}{m_{n}}
		=
        \frac{\sum_{v \in \cI} d_v^- d_v^+}{m_{n}}
        .
	\end{equation}

\begin{cor}
    \label{cor:s}
    Under the hypothesis of \autoref{lem:s} and uniformly for all \(\cI\subseteq [n]\) with \(\abs{\cI} \geq  n -s_{n}\), we have
    \(\nu_{\cI} = (1+o(1))\nu\).
\end{cor}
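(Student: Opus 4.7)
The plan is to reduce the statement to a direct application of \autoref{lem:s}, so the proof is little more than arithmetic. Write $\cS = [n] \setminus \cI$, so that $|\cS| \le s_n = o(n)$, and decompose
\begin{equation}
    d_{\cI}(1,1) = \sum_{v \in [n]} d_v^- d_v^+ - d_{\cS}(1,1).
\end{equation}
The first sum equals $n \cdot \ee{D_n^- D_n^+}$, which by \autoref{cond:main}(iii)(a) tends to $n(\ee{D^- D^+} + o(1))$. The second sum is $o(n)$ by \eqref{eq:nuI} in \autoref{lem:s}, and crucially this bound holds uniformly over all $\cS$ with $|\cS| \le s_n$ (since the coupling argument in the proof of \autoref{lem:s} only uses $|\cS| \le s_n$).

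Next I would handle the denominator: $m_n = \sum_{v \in [n]} d_v^+ = n\, \ee{D_n^+} = n(\lambda + o(1))$ by \autoref{cond:main}(ii). Combining,
\begin{equation}
    \nu_{\cI} = \frac{n(\ee{D^- D^+} + o(1)) - o(n)}{n(\lambda + o(1))} = \frac{\ee{D^- D^+}}{\lambda}(1 + o(1)) = (1+o(1))\,\nu,
\end{equation}
where I use that $\lambda \in (0,\infty)$ by hypothesis to divide cleanly. The uniformity over $\cI$ is inherited from the uniformity in \autoref{lem:s}.

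There is no real obstacle here: the substantive content is already contained in \autoref{lem:s}, and this corollary is just a one-line consequence after normalising by $m_n$. The only thing worth double-checking is the uniformity claim, but since the $o(n)$ in \eqref{eq:nuI} does not depend on the specific $\cS$ (only on $|\cS| \le s_n$), the resulting $o(1)$ in $\nu_{\cI}$ is also uniform over admissible $\cI$.
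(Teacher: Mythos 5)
Your proof is correct and is essentially the paper's argument spelled out: the paper simply says the corollary follows from \eqref{eq:nuI} applied to $\cS=[n]\setminus\cI$, and you fill in the straightforward arithmetic (subtracting off $d_{\cS}(1,1)=o(n)$ and normalising by $m_n=n(\lambda+o(1))$). Nothing to add.
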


\begin{proof}
   The corollary follows from \eqref{eq:nuI} with \(\cS = [n] \setminus \cI\).
\end{proof}

\begin{cor}
    \label{cor:D}
  Under \autoref{cond:main}, we have $\Delta_n = o(\sqrt{n}).$
\end{cor}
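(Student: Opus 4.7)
The plan is to reduce the corollary directly to \autoref{lem:s} applied to singleton sets. Note that the constant sequence $s_n = 1$ trivially satisfies $s_n = o(n)$, so the hypotheses of \autoref{lem:s} are met under \autoref{cond:main}. Crucially, the bounds \eqref{eq:nuI:1} are asserted uniformly over all $\cS$ with $|\cS| \le s_n$, and hence in particular over every singleton $\cS = \{v\}$ with $v \in [n]$.

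Concretely, I would take $\cS = \{v\}$ and read off \eqref{eq:nuI:1} as $d_v^- = d_{\cS}(1,0) = o(\sqrt{n})$ and $d_v^+ = d_{\cS}(0,1) = o(\sqrt{n})$, with the $o(\cdot)$ uniform in $v$. Taking the maximum over $v \in [n]$ of both quantities then yields $\Delta_n = \max_{v \in [n]}\max(d_v^-, d_v^+) = o(\sqrt{n})$, as desired.

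There is no genuine obstacle here: the uniformity already present in \autoref{lem:s} is exactly what is required for the maximum to be controlled, and the underlying fact driving the conclusion is the uniform integrability of $(D_n^{\pm})^2$ supplied by \autoref{cond:main}(iii), which was already exploited in the proof of \autoref{lem:s} via \autoref{lem:dct}. One could of course re-derive the bound from scratch by applying \autoref{lem:dct} to $(D_{n,*}^{-})^2$ and $(D_{n,*}^{+})^2$ where $D_{n,*}$ suppresses a single worst-case vertex, but invoking \autoref{lem:s} is cleaner and avoids duplicating that argument.
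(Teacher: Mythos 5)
Your proof is correct and uses essentially the same approach as the paper: both instantiate \autoref{lem:s} with $s_n = 1$ and a singleton $\cS$, then invoke the uniformity of the $o(\cdot)$ to control the maximum. The paper phrases it by taking $\cS$ to be the specific singleton achieving the maximum degree, whereas you take the max over all singletons afterwards, but these are the same argument.
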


\begin{proof}
    Let \(\cS\) be the set containing only a node with maximum out/in degree and apply \eqref{eq:nuI:1}.
\end{proof}

\section{Branching processes}\label{sec:branching}

Let \(\xi\) be a random variable on \(\dsZ_{\ge 0}\) and let
\((\xi_{i,t})_{i\ge 1, t \ge 0}\) be iid (independent and identically distributed) copies of \(\xi\).
The branching process, also known as the Galton-Watson tree, \((X_t)_{t \ge 0}\) with offspring
distribution \(\xi\)
is defined by
\begin{equation}\label{eq:BP}
    X_{t}= 
    \begin{cases}
        1
        &
        \qquad (t =0)
        \\
        \sum_{i=1}^{X_{t-1}} \xi_{i,t-1}
        &
        \qquad (t \ge 1)
    \end{cases}
\end{equation}
Let $h$ be the probability generating function of $\xi$, i.e., \(h(z)=\sum_{i \ge 0} \p{\xi = i}
z^{i}\). Then
$$
\nu_{\xi}\coloneqq h'(1)=\E{\xi}. 
$$
\subsection{Supercritical branching process}

In this subsection we will assume that $\nu_{\xi} \in (1,\infty)$, usually referred to as $ (X_t)_{t
\ge 0}$ being \emph{supercritical}.

\subsubsection{Convergence of branching processes}
The sequence \(\nu_{\xi}^{-t}X_t\) is a martingale which, provided that \(\nu_{\xi}<\infty\), converges almost
surely to a random variable \(W\). However, stronger conditions are needed to show that \(W\) is
non-degenerated. The following is due to Kesten and Stigum (see \cite[pp.\ 24--29]{athreya1972} for a proof):
\begin{thm}\label{thm:kesten}
    Let \( (X_t)_{t \ge 0}\) be the branching process defined above. 
    Assume that \(\p{\xi = i} \ne 1\) for all \(i \in \dsN\).
    Let
    \(W= \lim_{t\to \infty} \nu_{\xi}^{-t} X_t\) be the rescaled limiting random variable. 
    \begin{enumerate}[\normalfont(i)]
        \item If \(\dsE[\xi \log_+ (\xi)]<\infty\), then \(\dsE[W]=1\) and \(W\) is absolutely
            continuous on \( (0, \infty)\).
        \item If \(\dsE[\xi \log_+ (\xi)]=\infty\), then \(W=0\) almost surely.
    \end{enumerate}
\end{thm}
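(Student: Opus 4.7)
The plan is to use the size-biased tree change-of-measure of Lyons--Pemantle--Peres, which handles both parts of the dichotomy in a unified way.

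First, I set up the martingale. Conditioning on $\mathcal{F}_{t-1}\coloneqq \sigma(\xi_{i,s}: s<t, i\ge 1)$ and using $\E{\xi}=\nu_\xi$, one checks that $W_t\coloneqq \nu_\xi^{-t}X_t$ is a non-negative $\mathcal{F}_t$-martingale, so $W_t\to W$ almost surely, and Fatou gives $\E{W}\le 1$. The whole theorem then reduces to deciding when $(W_t)$ is uniformly integrable, or equivalently when $W\equiv 0$.

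Second, I construct the size-biased tree $\hat T$ under a new measure $\hat{\mathbb{P}}$. Let $\hat\xi$ have the size-biased distribution $\hat{\mathbb{P}}(\hat\xi=k)=k\,\mathbb{P}(\xi=k)/\nu_\xi$. The tree $\hat T$ carries a distinguished infinite ray $v_0,v_1,v_2,\dots$ (the spine): each spine vertex $v_{t-1}$ has $\hat\xi_t$ offspring, one chosen uniformly to continue the spine, while all non-spine vertices reproduce independently with distribution $\xi$. A direct enumeration over all possible spines in a depth-$t$ tree shows that, restricted to $\mathcal{F}_t$ (the $\sigma$-algebra of the first $t$ generations with the spine forgotten), one has $d\hat{\mathbb{P}}|_{\mathcal{F}_t}/d\mathbb{P}|_{\mathcal{F}_t}=W_t$. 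From this identity and the classical Radon--Nikodym decomposition, $(W_t)$ is UI iff $W_t$ stays bounded $\hat{\mathbb{P}}$-a.s., while $W=0$ $\mathbb{P}$-a.s.\ iff $W_t\to\infty$ $\hat{\mathbb{P}}$-a.s.

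Third, I analyse $W_t$ on the size-biased tree. Since the children of $v_{t-1}$ all sit in generation $t$, one has the pointwise lower bound $W_t\ge \nu_\xi^{-t}\hat\xi_t$, and the $(\hat\xi_t)_{t\ge 1}$ are iid under $\hat{\mathbb{P}}$. Borel--Cantelli then gives $\limsup_t \nu_\xi^{-t}\hat\xi_t=\infty$ $\hat{\mathbb{P}}$-a.s.\ iff $\sum_t \hat{\mathbb{P}}(\hat\xi\ge \nu_\xi^t)=\infty$, and a short rewriting identifies this series with $\nu_\xi^{-1}\sum_t \E{\xi\,\mathbb{1}[\xi\ge \nu_\xi^t]}$, which by a layer-cake computation is finite iff $\E{\xi\log_+\xi}<\infty$. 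This immediately proves (ii). For the converse direction needed in (i), decomposing $W_t$ into the spine contribution (controlled by the Borel--Cantelli bound above) and a non-spine contribution (controlled by a conditional $L^1$-bounded martingale argument) yields $\sup_t W_t<\infty$ $\hat{\mathbb{P}}$-a.s., hence $\E{W}=1$.

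Finally, for the absolute continuity of $W$ on $(0,\infty)$, I would use the distributional fixed-point equation $W\eqd \nu_\xi^{-1}\sum_{i=1}^\xi W^{(i)}$ with $W^{(i)}$ iid copies of $W$, or equivalently the functional equation $\phi(u)=h(\phi(u/\nu_\xi))$ for the Laplace transform $\phi$ of $W$. Since $\xi$ is not concentrated on a single value, a standard argument iterating the fixed point smooths any atoms and yields absolute continuity of $W$ on $(0,\infty)$. The main technical obstacle is the combinatorial verification of the Radon--Nikodym identity $d\hat{\mathbb{P}}|_{\mathcal{F}_t}/d\mathbb{P}|_{\mathcal{F}_t}=W_t$; once this is secured, the dichotomy together with Borel--Cantelli along the spine settles both parts of the theorem cleanly, and absolute continuity is a classical consequence of the fixed-point equation.
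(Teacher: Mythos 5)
The paper does not give its own proof of this statement: it invokes it as the Kesten--Stigum theorem and cites Athreya and Ney's monograph, where the argument runs through generating functions and the classical analytic machinery. Your sketch takes the genuinely different Lyons--Pemantle--Peres route via the size-biased spine tree, and that route is correct: the Radon--Nikodym identity $d\hat{\mathbb{P}}|_{\mathcal{F}_t}/d\mathbb{P}|_{\mathcal{F}_t}=W_t$ translates the dichotomy between uniform integrability and degeneracy into whether $\limsup_t W_t$ is finite or infinite $\hat{\mathbb{P}}$-a.s.; the pointwise bound $W_t\ge\nu_\xi^{-t}\hat\xi_t$ along the spine, with $(\hat\xi_t)$ iid size-biased, Borel--Cantelli, and the layer-cake identification of $\sum_t\E{\xi\,\mathbb{1}[\xi\ge\nu_\xi^t]}$ with a constant multiple of $\E{\xi\log_+\xi}$, pins down the threshold. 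What the LPP approach buys is a conceptual, purely probabilistic proof of the $L\log L$ criterion. Two things it does not hand you for free: the uniform-integrability direction of (i) rests on the spine-plus-bushes decomposition you gesture at, where one must show $\sum_k\nu_\xi^{-k}(\hat\xi_k-1)$ converges a.s.\ under the $L\log L$ hypothesis and combine this with a martingale argument on the bushes; this is the technically heaviest part of the proof and the sketch leaves it unfinished. Also, absolute continuity of $W$ on $(0,\infty)$ is not part of the LPP argument at all, as you correctly observe, so it must come from the separate fixed-point/functional-equation argument you mention (this is the Stigum half of the theorem). One notational warning: your $\hat\xi$ is the size-biased variable with law $k\,\mathbb{P}(\xi=k)/\nu_\xi$, which is a different object from the conjugate distribution the paper denotes $\hat\xi$ in~\eqref{eq:conj}; the theorem statement does not itself use that symbol so there is no contradiction, but since the surrounding sections do, a distinct symbol would be safer.
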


In the case that \(\dsE[\xi \log_+(\xi)]<\infty\), since almost sure convergence implies
convergence in probability, it follows that for every fixed \(0<c_1  <c_2\),
\begin{align}\label{eq:cond_non-deg}
    \inf_t \p{c_1 \nu_{\xi}^t \leq X_t\leq c_2 \nu_{\xi}^t} > 0,
\end{align}
where the infimum is taken over all \(t\) such that \([c_1\nu_{\xi}^t,c_2\nu_{\xi}^t] \cap \dsN\) is non-empty.

{If \(\dsE[\xi \log_{+}(\xi)]=\infty\), the growth of \(X_t\) is not necessarily of order \(\nu_{\xi}^t\)
any more.  However, there always exists a normalization sequence \(\{m_{\xi,t}\}_{t\geq 0}\) with
\(\lim_{t\to \infty} (m_{\xi,t})^{1/t} = \nu_{\xi}\) and \(m_{\xi,t}=O(\nu_{\xi}^t)\), such that
\(X_t/m_{\xi,t}\) converges almost surely to a non-degenerate limit}~(see \cite[pp.\
30]{athreya1972} for a proof and further reference).
In words, the exponential growth rate of \(X_{t}\) is still \(\nu_{\xi}\), but there might be subexponential fluctuations that slow it down.  
Again, almost sure convergence implies that for every \(0<c_1  <c_2\),
\begin{align}\label{eq:cond_deg}
\inf_{t} \p{c_1 m_{\xi,t} \leq X_t\leq c_2 m_{\xi,t}} > 0,
\end{align}
where the infimum is taken over all \(t\) such that \([c_1m_{\xi,t},c_2m_{\xi,t}] \cap \dsN\) is non-empty.

If \(\p{\xi=\nu_{\xi}}=1\) for some \(\nu_{\xi} \ge 2\), then \autoref{thm:kesten} does no longer apply.
Nonetheless, we can define \(m_{\xi,t} = \nu_{\xi}^t\) and \eqref{eq:cond_deg} still holds for any
\(0 < c_{1} \le 1  \leq  c_{2}\).

\subsubsection{Duality and conditioned branching process}\label{sec:kesten}

The \emph{survival
    probability} of \((X_t)_{t \ge 0}\) is defined by
\begin{equation}
    s \coloneqq\p{X_t>0,\, \text{for all }t\geq 1}.
\end{equation}
It is well-known that \(s > 0\) if and only if \(\nu_{\xi} > 1\), see, e.g., \cite[Theorem 3.1]{vanderhofstad2016}.

For $s<1$, the \emph{conjugate probability
distribution} of \(\xi\), \(\hxi\) is defined by
\begin{equation}\label{eq:conj}
    \pp{\hxi=\ell} \coloneqq (1-s)^{\ell-1}\p{\xi=\ell}.
\end{equation}
The definition of \(\hxi\) can be extended to encompass the case \(s=1\) by taking the limit of
\eqref{eq:conj} as \(s \uparrow 1\). In other words, when \(s = 1\), 
\begin{equation}\label{XHNOP}
    \pp{\hxi = 1}  = \pp{\xi =1}, \qquad \pp{\hxi = 0} = 1-\p{\xi = 1}.
\end{equation}
Note that
\begin{equation}\label{PXNEC}
    \hnu_{\xi} \coloneqq \e[\hxi] =  h'(1-s) \in [0, 1).
\end{equation}
By the assumption that \(\nu_{\xi}>1\), we have \(s>0\). Thus \(\hnu_{\xi} < 1\)  and \(\hnu_{\xi} = 0\) if
and only if \(\p{\xi\leq 1} = 0\).

\begin{lemma}\label{lem:conj}
    Let \(\xi_{n}\) be a sequence of random variables such that \(\xi_{n} \to \xi\) in distribution
    and in expectation. 
    Then \(\hat{\xi}_{n} \to \hat{\xi}\) in distribution and in expectation.
\end{lemma}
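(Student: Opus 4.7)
The plan is to reduce both claims to the single fact that \(s_n\to s\); then the distributional and expectation convergence follow by direct substitution. Write \(h_n, h\) for the probability generating functions of \(\xi_n, \xi\), respectively. Convergence in distribution gives pointwise convergence \(h_n(z)\to h(z)\) on \([0,1]\); since each \(h_n\) is convex, non-decreasing, bounded by \(1\), and \(h_n(1)=h(1)=1\), this upgrades to uniform convergence on \([0,1]\). For the derivatives, on any \([0,r]\subset[0,1)\) the termwise bound \(\ell\pp{\xi_n=\ell}z^{\ell-1}\le \ell r^{\ell-1}\) (summable) combined with pointwise convergence of each term yields \(h_n'\to h'\) uniformly on \([0,r]\); and \(h_n'(1)\to h'(1)\) by the convergence-in-expectation hypothesis.

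To show \(s_n\to s\), I use the characterisation that \(1-s\) is the smallest fixed point of \(h\) in \([0,1]\). In the supercritical setting (\(\nu_\xi>1\)), \(1-s<1\) and by convexity \(h(z)>z\) on \([0,1-s)\) while \(h(z)<z\) on \((1-s,1)\). For any \(\eps>0\), uniform convergence of \(h_n\) to \(h\) forces \(h_n(1-s-\eps)>1-s-\eps\) and \(h_n(1-s+\eps)<1-s+\eps\) for \(n\) large. Since \(\nu_{\xi_n}\to\nu_\xi>1\), \(\xi_n\) is eventually supercritical and \(1-s_n\) is the smallest fixed point of \(h_n\) in \([0,1)\); hence \(1-s_n\in(1-s-\eps, 1-s+\eps)\), i.e., \(|s_n-s|<\eps\).

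Convergence in distribution now follows by substitution: for \(\ell\ge 1\),
\begin{equation}
\pp{\hxi_n=\ell}=(1-s_n)^{\ell-1}\pp{\xi_n=\ell}\longrightarrow(1-s)^{\ell-1}\pp{\xi=\ell}=\pp{\hxi=\ell},
\end{equation}
and for \(\ell=0\) with \(s<1\), \(\pp{\hxi_n=0}=\pp{\xi_n=0}/(1-s_n)\to\pp{\hxi=0}\). Convergence in expectation, \(\ee{\hxi_n}=h_n'(1-s_n)\to h'(1-s)=\ee{\hxi}\), follows from \(s_n\to s\) together with the local uniform convergence of \(h_n'\) near \(1-s\) in \([0,1)\).

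The main obstacle is the degenerate boundary case \(s=1\) (equivalently \(\pp{\xi=0}=0\)), where both \(\pp{\xi_n=0}\) and \(1-s_n\) tend to zero and the ratio formula becomes a \(0/0\) limit. I would handle it by rewriting the fixed-point identity \(h_n(1-s_n)=1-s_n\) as
\begin{equation}
\pp{\hxi_n=0} = 1 - \sum_{\ell\ge 1}(1-s_n)^{\ell-1}\pp{\xi_n=\ell},
\end{equation}
and using the tail bound \(\sum_{\ell\ge 2}(1-s_n)^{\ell-1}\pp{\xi_n=\ell}\le (1-s_n)\to 0\) to obtain \(\pp{\hxi_n=0}\to 1-\pp{\xi=1}\), matching the limit definition \eqref{XHNOP}. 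The expectation in this case is handled by a monotonicity sandwich: \(h_n'(z)\le h_n'(1-s_n)\le h_n'(1)\) for any fixed \(z<1\) and \(n\) large, and then sending \(z\uparrow 1\) after \(n\to\infty\) yields \(h_n'(1-s_n)\to h'(1)=\ee{\hxi}\).
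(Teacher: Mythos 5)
Your proof takes a genuinely different and more self-contained route than the paper's, which simply cites Lemma~4.5 of Bordenave, Caputo and Salez for \(s_n\to s\), reads off distributional convergence from~\eqref{eq:conj}, and then uses the stochastic domination \(\hat\xi_n\le_{\mathrm{st}}\xi_n\) together with \autoref{lem:dct} for convergence in expectation. You instead prove \(s_n\to s\) directly through uniform convergence of the generating functions and the fixed-point characterisation of \(1-s\), and you obtain expectation convergence from \(\e[\hxi_n]=h_n'(1-s_n)\to h'(1-s)=\e[\hxi]\) using local uniform convergence of \(h_n'\) on \([0,1)\). That main argument is correct, and it buys independence from the cited external lemma.

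There is, however, a concrete error in your separate treatment of the boundary case \(s=1\) for the expectation. You claim the sandwich \(h_n'(z)\le h_n'(1-s_n)\le h_n'(1)\) for any fixed \(z<1\) and \(n\) large, and conclude \(h_n'(1-s_n)\to h'(1)\). Both parts are wrong. Since \(s_n\to 1\), we have \(1-s_n\to 0\), so for any fixed \(z\in(0,1)\) eventually \(1-s_n<z\); as \(h_n'\) is nondecreasing this gives \(h_n'(1-s_n)\le h_n'(z)\), the reverse of what you wrote. Moreover the target value is \(h'(1-s)=h'(0)=\p{\xi=1}=\e[\hxi]\) (consistent with the limiting definition~\eqref{XHNOP}), not \(h'(1)=\e{\xi}=\nu_\xi>1\), which is a different quantity. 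The good news is that the separate case was unnecessary: your general argument, namely that \(s_n\to s\) together with uniform convergence of \(h_n'\to h'\) on some \([0,r]\subset[0,1)\) containing \(1-s\) gives \(h_n'(1-s_n)\to h'(1-s)\), applies verbatim when \(1-s=0\), because \(0\in[0,1)\). Delete the erroneous sandwich and simply invoke the general statement for \(s=1\) as well; the rest of your proof stands.
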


\begin{proof}
    Let \(s_{n}\) and \(s\) be the survival probabilities of the branching processes with offspring
    distributions
    \(\xi_{n}\) and \(\xi\) respectively. It follows from Lemma~4.5 in \cite{bordenave2016} that
    \(s_{n} \to s\).

    Thus by the definition of conjugate distribution \eqref{eq:conj},  we have \(\hat{\xi}_{n} \to \hat{\xi}\) in distribution.
    To see that the convergence is also in expectation, note that
    \(\hat{\xi}_{n} \le_{\text{st}} \xi_{n}\). 
    Thus we can apply \autoref{lem:dct} with \(X_{n} = \hat{\xi}_{n}\) and \(Y_{n} = \xi_{n}\).
\end{proof}
    
An important property of supercritical branching processes is \emph{duality} \cite[Theorem
3.7]{vanderhofstad2020}:
\begin{thm} \label{thm:branching}
    Let \( (X_{t})_{t \ge 0}\) be as in \autoref{thm:kesten} and let \(s\) be its survival probability.
    If \(s < 1\), then the branching process \( (X_{t})_{t \ge 0}\) conditioned on extinction is distributed as a branching process
    with offspring distribution \(\hxi\).
\end{thm}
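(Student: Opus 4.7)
The plan is to reduce the statement to a standard Bayesian decomposition over the first generation and then iterate the branching property. Throughout, write $q \coloneqq 1-s$ for the extinction probability. By the standard fixed-point characterization of $s$ (the smallest non-negative fixed point of $h$), we have $h(q) = q$. First I would verify that $\hat{\xi}$ as defined in \eqref{eq:conj} is a genuine probability distribution: summing~\eqref{eq:conj} over $\ell \ge 0$ gives $\sum_{\ell} (1-s)^{\ell-1} \p{\xi = \ell} = h(q)/q = 1$, which uses $h(q)=q$ and the assumption $s<1$ (so $q\neq 0$).

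The heart of the argument is a Bayes computation for the root's offspring. Let $\cE \coloneqq \{X_t = 0 \text{ for some } t\}$ denote the extinction event, so $\p{\cE} = q$. Conditional on $\xi_{1,0} = \ell$, the subtrees rooted at the $\ell$ children of the root are, by the branching property, independent copies of $(X_t)_{t \ge 0}$; therefore $\p{\cE \mid \xi_{1,0} = \ell} = q^{\ell}$. Applying Bayes,
\begin{equation}
\p{\xi_{1,0} = \ell \mid \cE} = \frac{q^{\ell}\,\p{\xi = \ell}}{q} = (1-s)^{\ell-1}\p{\xi = \ell} = \p{\hat{\xi} = \ell}.
\end{equation}
Hence the root of the conditioned tree has offspring distribution $\hat{\xi}$.

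Next I would upgrade this from the root's offspring distribution to the full process via an induction on the generation (or equivalently the usual Markovian argument for Galton--Watson trees). The key observation is that, given $\xi_{1,0} = \ell$ and the event $\cE$, the $\ell$ subtrees of the root are conditionally independent, and each subtree is distributed as $(X_t)_{t \ge 0}$ conditioned on its own extinction event $\cE^{(i)}$. This factorization works because $\cE = \bigcap_{i=1}^{\ell} \cE^{(i)}$ on $\{\xi_{1,0} = \ell\}$ and the events $\cE^{(1)}, \dots, \cE^{(\ell)}$ are mutually independent. Combining this with the Bayes computation above, the conditioned process has the defining property of a branching process: the root has $\hat{\xi}$-many children, and each of them independently initiates a copy of the same conditioned process. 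Iterating over generations yields the claimed distributional identity.

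I do not expect a serious obstacle here; the entire content is a one-step conditional computation plus the self-similarity of Galton--Watson trees, and the statement is already referenced as a classical fact (see \cite[Theorem 3.7]{vanderhofstad2020}). The only point requiring a small amount of care is handling the boundary case $s=1$, which is excluded by hypothesis, so that the limiting definition \eqref{XHNOP} of $\hat{\xi}$ does not need to be invoked in the proof.
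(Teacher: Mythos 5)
The paper does not prove this theorem itself; it is stated as a classical fact with a citation to \cite[Theorem 3.7]{vanderhofstad2020}. Your proposed proof is correct, and it is precisely the standard argument found in the cited reference: a Bayes computation at the root using $h(q)=q$ (with $q=1-s>0$ since the process is supercritical and $s>0$), followed by the observation that, conditionally on the root's offspring count and extinction, the subtrees are independent and each is itself an extinction-conditioned copy of the process, which is then iterated through the tree's recursive structure. Nothing is missing; your proposal supplies exactly the proof the paper delegates to the literature.
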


{The next key result allows us to estimate the probability of certain rare events in branching
processes. It generalises a result of Riordan and Wormald~\cite[Lemma 2.1]{riordan2010}, who proved
it for Poisson distributed offsprings.}

\begin{thm}
    \label{thm:BP}
    Let \( (X_t)_{t \ge 0}\) be a branching process with offspring distribution \(\xi\) with
    \(\nu_{\xi}\in(1,\infty)\).  
    Let  \({t_\xi(\omega)}\coloneqq \inf \{t\geq 0: m_{\xi,t'}\geq \omega \text{ for all }t'\geq t\}\). 
    \begin{enumerate}[\normalfont(i)]
        \item If $\p{\xi \le 1}>0$, then there exist constants \(c, C>0\) depending on $\xi$ such that
            \begin{equation}\label{DMEXN}
                c\hnu_{\xi}^{t} \leq \p{\cap_{r=1}^{{t}} [0<X_r<\omega]}\leq C \hnu_{\xi}^{t-t_\xi(\omega)},
                \qquad
                (t \ge 1, \omega \ge t).
            \end{equation}

        \item If $\p{\xi \le 1}=0$, then \(\hnu_{\xi}=0\) and
            there exists $c>0$ depending on $\xi$ such that
            \begin{equation}\label{eq:tower:prob}
                \p{\cap_{r=1}^{{t}} [0<X_r<\omega]}\leq \exp\{-c 2^{t-t_\xi(\omega)}\},
                \qquad
                (t \ge 1, \omega \ge 1).
            \end{equation}
    \end{enumerate}
\end{thm}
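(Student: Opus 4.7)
The proof splits along the two cases. Both subparts of (i) lean on the duality \autoref{thm:branching}: conditioned on eventual extinction, $(X_r)_{r\ge 0}$ is distributed as the subcritical Galton-Watson tree $(\hat X_r)_{r\ge 0}$ with offspring $\hxi$ of mean $\hnu_\xi\in[0,1)$, and $\hnu_\xi>0$ under $\pp{\xi\le 1}>0$. For the lower bound in (i), restricting to extinction (probability $1-s>0$) reduces the problem to bounding $\p{\cap_{r=1}^t [0<\hat X_r<\omega]}$ for the subcritical dual. Paley-Zygmund on $\hat X_t$, combined with the standard subcritical-BP variance bound $\V{\hat X_t}=O(\hnu_\xi^{t-1})$, gives $\p{\hat X_t>0}\ge c\hnu_\xi^t$. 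The additional upper constraint $\hat X_r<\omega$ under $\omega\ge t$ costs only a constant factor: conditional on $\{\hat X_t>0\}$, the process obeys the Q-process/Yaglom dynamics whose generation sizes have bounded exponential moments, so the maximum over $t$ generations is $O(\log t)\le\omega$ with probability bounded away from zero.

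For the upper bound in (i), note $A_t\subseteq\{0<X_t<\omega\}$, so it suffices to bound this marginal event. Splitting by extinction or survival, the extinction contribution is $(1-s)\p{0<\hat X_t<\omega}\le\hnu_\xi^t\le\hnu_\xi^{t-t_0}$ by Markov, where $t_0:=t_\xi(\omega)$. The survival contribution $s\,\p{X_t<\omega\mid \text{survival}}$ is controlled using the Kesten-Stigum convergence $X_t/m_{\xi,t}\to W$ with $W>0$ a.s.\ on survival, together with the classical Biggins-Bingham left-tail estimate $\p{0<W<\eps}=O(\eps^\beta)$ with exponent $\beta=\log(1/\hnu_\xi)/\log\nu_\xi$; since $\omega/m_{\xi,t}$ decays at rate $\nu_\xi^{-(t-t_0)}$ for $t\ge t_0$, this contribution is also $O(\hnu_\xi^{t-t_0})$. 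Combining the two parts yields the claimed upper bound.

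For (ii), $\pp{\xi\le 1}=0$ forces $\xi\ge 2$ almost surely, so $X_r\ge 2^r$ deterministically and $\{X_r<\omega\}$ is empty for $r\ge\log_2\omega$. I would consider the $X_{t_0}\ge 1$ subtrees rooted at level $t_0$: each has depth $t-t_0$ and at least $2^{t-t_0}$ leaves at level $t$. For the total at level $t$ to remain below $\omega$, essentially every one of the $\Theta(2^{t-t_0})$ internal nodes in each such subtree must produce exactly $2$ children, an event with probability at most $\pp{\xi=2}^{\Theta(2^{t-t_0})}=\exp(-c\cdot 2^{t-t_0})$ for a suitable $c>0$ depending on $\pp{\xi=2}$.

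The main obstacle is the survival contribution in the upper bound of (i), which requires quantitative control on the left tail of the Kesten-Stigum limit $W$ near zero. This is especially delicate when $\e[\xi\log_+\xi]=\infty$, since then the normalization $m_{\xi,t}$ may differ from $\nu_\xi^t$ by slowly varying factors that must be carefully tracked through the tail exponent $\beta$. The lower bound in (i) and part (ii) are comparatively cleaner: the former reduces via duality to standard subcritical-BP estimates combined with Paley-Zygmund; the latter is a direct combinatorial count exploiting the almost-sure doubling.
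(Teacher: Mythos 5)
Your overall strategy is genuinely different from the paper's. The paper follows Riordan--Wormald and works with $r_t := \p{X_t < \omega \mid X_t > 0}$, establishing $r_{t_\xi(\omega)} < 1 - c_0$ from the almost-sure convergence of $X_t/m_{\xi,t}$, and then deriving a recursion for $r_t$ by decomposing on $S(t)$, the set of first-generation children with progeny alive at level $t$: a Taylor expansion of the generating function $h$ around $1-s$ gives $\p{|S(t)|=1 \mid |S(t)|\ge 1} = \hnu_\xi(1+O(\hnu_\xi^t))$, which drives the geometric decay. The lower bound in the paper fixes the skeleton event $E_1 = [X_t^* = 1]$ of probability exactly $s\,\hnu_\xi^t$ and then bounds the sizes of the attached subcritical bushes by Chebyshev. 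Part (ii) is the one-line recursion $r_t \le r_{t-1}^2$ since $|S(t)|\ge 2$ a.s.

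Concerning your lower bound: it is essentially the same spine decomposition as the paper's, packaged differently through duality and Paley--Zygmund. It can work, though the ``Q-process generation sizes have bounded exponential moments'' claim is overkill and unproved; conditioning on $\{\hat X_t > 0\}$ and applying Chebyshev to the bush contributions (as the paper does after conditioning on $E_1$) is cleaner and requires only the finiteness of $\V{\hxi}$, which follows because the radius of convergence of $\hat h$ exceeds $1$. Note you cannot replace this conditioning by a union bound: $\p{\hat X_t > 0} \asymp \hnu_\xi^t$ decays exponentially, while the unconditional union bound $\sum_r \p{\hat X_r \ge \omega}$ decays only polynomially in $\omega$, so a bare subtraction would fail for large $t$.

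Concerning your upper bound: this is the genuine novelty of your proposal and also where the main gap lies, as you yourself flag. Two issues. First, passing from $\p{0 < X_t < \omega}$ to a tail bound on $W$ requires the additive identity $W = m_{\xi,t}^{-1}\sum_{i=1}^{X_t} W^{(i)}$ together with a Markov bound on $\sum_{i\le k} W^{(i)}$ for $k<\omega$; ``$X_t/m_{\xi,t}\to W$'' alone does not transfer a tail at time infinity to a tail at finite $t$. Second, and more seriously, the Biggins--Bingham left-tail estimate $\p{0 < W \le x} = O(x^\beta)$ with $\beta = \log(1/\hnu_\xi)/\log\nu_\xi$ is not available off the shelf in the generality the theorem requires: \autoref{thm:BP} assumes only $\nu_\xi \in (1,\infty)$, not $\e[\xi\log_+\xi] < \infty$, and in the Seneta--Heyde regime the normalization $m_{\xi,t}$ and the behaviour of $W$ near the origin both carry slowly varying corrections that must cancel exactly for the bound $C\hnu_\xi^{t - t_\xi(\omega)}$ to come out clean. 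The paper's recursion on $r_t$ sidesteps the Kesten--Stigum limit and its tail entirely, needing only the generating-function Taylor expansion and \eqref{eq:cond_deg}.

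Concerning part (ii): your first observation (with $\xi \ge 2$ a.s.\ one has $X_r \ge 2^r$, so the event is empty for $r \ge \log_2\omega$) is correct but only disposes of $t \ge \log_2\omega$; it does not produce the stated double-exponential bound for $t_\xi(\omega) \le t < \log_2\omega$, which is the non-trivial range. The second argument---that $X_t<\omega$ forces ``essentially every internal node to produce exactly two children''---is not a valid implication, and collapses entirely if $\p{\xi = 2} = 0$, say $\xi\ge 3$ a.s. The correct and much shorter argument is the paper's: $|S(t)|\ge 2$ a.s.\ yields $r_t\le r_{t-1}^2$, and then $r_t\le (1-c_0)^{2^{t-t_\xi(\omega)}}$ from $r_{t_\xi(\omega)}<1-c_0$.
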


\begin{rem}\label{rem:seneta}
    If \(\E{\xi \log_{+}(\xi)}<\infty\), by \autoref{thm:kesten}, we may choose
    \(m_{\xi,t}=\nu_{\xi}^t\) and \(t_\xi(\omega)=  \lceil\log_{\nu_{\xi}} \omega\rceil\).  In fact,
    the result of Seneta \cite{seneta1968a} implies that \(t_\xi(\omega)= (1+o(1))\log_{\nu_{\xi}}
    \omega\) if \(\E{\xi \log_{+}(\xi)}=\infty\).
\end{rem}

\begin{proof}[Proof of the upper bound in (i)]
It suffices to provide an upper bound for the probability of the event \([0<X_t<\omega]\). Since the proof
follows the same ideas as the proof of Lemma 2.1 in \cite{riordan2010}, we omit the details that
are identical to the aforementioned lemma. 

Define \(r_t\coloneqq\p{X_t<\omega\mid X_t>0}\) and write \(t(\omega)=t_\xi(\omega)\).
Equation~\eqref{eq:cond_deg} and the definition of \(t(\omega)\) imply that 
\begin{equation}\label{AGJJP}
    \begin{aligned}
    \pp{{X_{t(\omega)} \ge \omega} \given X_{t(\omega)}>0} 
    &
    \ge 
    \p{X_{t(\omega)} \ge\omega}
    \\
    &
    \geq  \p{X_{t(\omega)}  \ge m_{\xi,t(\omega)}} 
    \\
    &
    \ge
    \p{m_{\xi,t(\omega)} \le X_{t(\omega)} \le 2 m_{\xi,t(\omega)}}>c_0,
    \end{aligned}
\end{equation}
for some \(c_0>0\), which implies that \(r_{t(\omega)}<1-c_0\). 

Let \(s\) be the survival probability of $X_t$ and let \(S(t)\subseteq X_1\) be the set of children
of the initial particle that have progeny in \(X_t\).
Thus, as in~\cite{riordan2010}, to show that \(r_t\) has exponential decrease with basis \(\hnu_{\xi}\) for \(t\geq
t(\omega)\), it suffices to show that
\begin{equation}\label{JKZUU}
\p{|S(t)|=1| \given[\big] |S(t)|\geq 1} = \hnu_{\xi} (1+O(q(s) \hnu_{\xi}^t))
,
\end{equation}
for some function \(q\). This directly implies the upper bound in (i) since \(\p{0<X_{t}<\omega} \le r_{t}\).
We prove \eqref{JKZUU} in the following.

We first consider the case $s<1$. Recall that \(s>0\) and let
\(s_t=\p{X_t>0}\).  We can assume that $t$ is large enough
    with respect to $s$, as we can set $C$ large enough with respect to $\hnu_{\xi}$ so the bound holds
trivially for small values of $t$. Using Markov inequality and~\autoref{thm:branching},
\begin{equation} \label{eq:s:UB}
    \begin{aligned}
        s \leq s_{t} &= \p{(X_r)_{r \ge 0}\text{ survives}}+\p{X_{t}>0,(X_r)_{r \ge 0}\text{  extinguishes}}\\ 
        & = s+(1-s)\p{X_{t}>0\mid (X_r)_{r \ge 0} \text{ extinguishes}} \\
        & \leq s+(1-s)\hnu^t_{\xi}.
    \end{aligned}
\end{equation}

Conditioning on \(X_{1}\), the events \([x \in S(t)]\) for \(x \in X_{1}\) happen independently with
probability \(s_{t-1}\). Thus, the random
variable \(|S(t)|\) has the distribution of a \(s_{t-1}\)-thinned version of \(\xi\) and 
\begin{align}\label{eq:st1}
\p{|S(t)|=1| \given[\big] |S(t)|\geq 1} 
=
\frac{\p{|S(t)|=1| } }{\p{|S(t)| \geq 1|}}
= \frac{s_{t-1}h'(1-s_{t-1})}{1-h(1-s_{t-1})} .
\end{align}

We use Taylor expansion to approximate \(h(1-s_{t-1})\) and \(h'(1-s_{t-1})\) around \(1-s\). First note that for every \(m\geq 1\), the \(m\)-th derivative of \(h\) is bounded at \(1-s\)
\begin{equation}
h^{(m)}(1-s) 
= \sum_{\ell\geq 0} (\ell)_m (1-s)^{\ell-m} \p{\xi=\ell}\leq \sum_{\ell\geq 0} (\ell)_m (1-s)^{\ell-m} = m! s^{-m-1} ,
\end{equation}
where \( (\ell)_{m} \coloneqq  \ell(\ell-1)\dots(\ell-m+1).\) 
Using Taylor's theorem, we have, uniformly for all \(t \ge 1\)
\begin{align*}
    h(1-s_{t-1})&= h(1-s)+ h'(1-s)(s-s_{t-1}) + O((s-s_{t-1})^2)= 1-s + O(\hnu_{\xi}^t)
,
\\
h'(1-s_{t-1})&= h'(1-s)+ h''(1-s)(s-s_{t-1}) + O((s-s_{t-1})^2)=\hnu_{\xi} +
O(\hnu_{\xi}^t) ,
\end{align*}
where we use \(h(1-s)=1-s\), \(h'(1-s)=\hnu_{\xi}\) and~\eqref{eq:s:UB}.
Using these estimates and~\eqref{eq:st1}, we obtain \eqref{JKZUU}.

In the case $s=1$, the event $[\abs{S(t)}\geq 1]$ holds almost surely and  \eqref{JKZUU} still holds
since
\[
    \p{|S(t)|=1| \given[\big] |S(t)|\geq 1} 
    =
    \p{|S(t)|=1}
    =\p{\xi=1}=\hnu_{\xi}.
    \qedhere
\]
\end{proof}

\begin{proof}[Proof of (ii)]
    Let \(r_{t}\) and \(c_{0}\) be as in the proof of the upper bound in (i).  
    In this case, the event \([\abs{S(t)} \ge 2]\) holds almost surely.
    Thus
    \begin{equation}\label{THAVK}
        r_t
        \leq 
        \p{|S(t)|\geq 2}r_{t-1}^2 = r_{t-1}^2,
    \end{equation}
    and $r_t \leq (r_{t(\omega)})^{2^{t-t(\omega)}}< (1-c_0)^{2^{t-t(\omega)}}$. 
\end{proof}
\medskip

\begin{proof}[Proof of the lower bound in (i)]
    Let \((X_r^*)_{r \ge 0}\subseteq (X_r)_{r \ge 0}\) be the subprocess of the elements that have some surviving progeny.
For the lower bound, consider the following events: 
\begin{equation}
E_1=[X_{t}^*=1]
,
\qquad
E_2 = \left[\cap_{r=1}^t [0<X_r<\omega]\right]
.
\end{equation}
{Instead of lower bounding the probability of $E_2$, we will show a lower bound for the  probability of \(E \coloneqq E_{1} \cap E_{2}\). Write}
\begin{equation}\label{eq:E}
\p{E} = \p{E_1} \p{E_2 \mid E_1}. 
\end{equation}

We start by computing \( \p{E_1} \).  Conditioning on that an element of \(X_t\) belongs to
\(X_t^*\) is equivalent to conditioning on that the progeny of at least one of its children
survives. So, conditional on \(X_t\) surviving, \(X_t^*\) is a
branching process with offspring distribution \(\xi^*\), the \emph{$s$-thinned} version of \(\xi\)
conditioning on being at least \(1\). In other words, for $\ell\geq 1$
\begin{equation}
    \p{\xi^*=\ell} 
    = \frac{\sum_{m \ge \ell} \p{\xi=m} \binom{m}{\ell} s^\ell (1-s)^{m-\ell} }{s}
    = \frac{s^{\ell-1} h^{(\ell)}(1-s)}{\ell!}
    \; ,
    \qquad 
\end{equation}
and in particular, by definition,
\begin{align}\label{eq:Z}
    \p{\xi^*=1} = h'(1-s) = \hnu_{\xi}.
\end{align}
We conclude that
\begin{equation}\label{eq:E1}
\p{E_1} 
=
\p{[(X_r)_{r \ge 0}\text{ survives}]\cap \left[\cap_{r=1}^ t [X^*_r=1]\right]}
=
s \hnu_{\xi}^{t}
.
\end{equation}

If $\p{\xi=0}=0$, then $X_t^*=1$ implies that $X_r=1$ for all $r\leq t$. Therefore, $\p{E_2\cond
E_1}=1$ and we are done. Thus, we assume that $\p{\xi=0}>0$, and so $s<1$.

Conditioning on \(E_{1}\), the tree can be seen as the main branch of length $t$ (the part that has surviving
progeny), with \(t\) independent branching processes with offspring distribution \(\hxi\)
attached to each node on the path. Thus uniformly for \(0 \le r \le t\), 
\begin{equation}\label{eq:X:r:mean}
    \E{X_{r} \given E_{1}}
    =
    1 + \sum_{j=1}^{r} \hnu_{\xi}^{j}
    =
    O(1)
    ,
\end{equation}
and
\begin{equation}\label{eq:X:r:var}
    \Vv{X_{r} \given E_{1}}
    =
    \sum_{j=1}^{r} 
    \frac{\Vv{\hxi} \hnu^{j-1}(\hnu^{j}-1)}{\hnu-1}
    =
    O( 
    \Vv{\hxi}
    )
    ,
\end{equation}
where we use the moment formula in \cite[pp. 4]{athreya1972}.

By Theorem I.12.3 in \cite{athreya1972}, the probability generating function of \(\hat \xi\) is
\begin{equation}\label{eq:sD:pgf}
    \hat h(z)
    \coloneqq
    \E{ z^{\hxi}}
    = 
    \frac{h(z (1-s))}{1-s}
    .
\end{equation}
Since the radius of convergence of \(h(z)\) is at least \(1\) and \((1-s) \in (0,1)\), we have
\(\hat h''(1) < \infty\). In other words, \(\Vv{\hxi}<\infty\).

Therefore, it follows from Chebyshev's inequality that
\begin{equation}\label{eq:X:r:up}
    \p{E_{2}^c \given E_{1}}
    \le
    \sum_{j=0}^{t}
    \p{X_{j} \ge \omega \given E_{1}}
    =
    \bigO{
        \sum_{r=1}^{t} 
        \frac{\Vv{X_{r}|E_{1}}}{\omega^{2}}
    }
    =
    \bigO{
        \frac{t \Vv{\hat \xi}}{\omega^{2}}
    }
    =
    O(t^{-1})
    .
\end{equation}
The lemma follows immediately by putting  \eqref{eq:E1} and \eqref{eq:X:r:up} into \eqref{eq:E}.
\end{proof}

\subsection{Subcritical branching process}

If $\nu_\xi\in (0,1)$, then $s=0$ and $\hnu_\xi=\nu_\xi$.
Thus, the following theorem shows that
the depth of thin supercritical branching processes is close to the depth of subcritical
processes. This has already been observed in~\cite{fernholz2007diameter}.

\begin{thm}\label{thm:BP:sub}
    Let \( (X_t)_{t \ge 0}\) be a branching process with offspring distribution \(\xi\) with
    \(\nu_{\xi}\in(0,1)\).  Then
    \begin{equation}\label{IFWMJ}
        \lim_{t\to \infty} \p{X_{t} > 0}^{1/t} = \nu_{\xi}.
    \end{equation}
    Moreover, letting \(Y_{t} = \sum_{i=0}^{t} X_{i}\),
    for all \(\omega(t)\) such that \(\omega(t)/t = \infty\) as \(t \to \infty\), we
    have
    \begin{equation}\label{PAPPP}
        \lim_{t\to \infty} 
        \p{\left[ Y_{t} \le \omega(t)\right] 
            \cap 
            \left[ 
            X_{t} > 0
            \right]
        }^{1/t}
        = 
        \nu_{\xi}
        .
    \end{equation}
\end{thm}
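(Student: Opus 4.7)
The plan is to reduce both limits to a sharp asymptotic lower bound on \(\p{X_t > 0}\), obtained from a classical Kolmogorov--Yaglom argument, together with a conditional second-moment estimate that bounds \(\E{Y_t \I{X_t > 0}}\). Both upper bounds are immediate from Markov's inequality: \(\p{X_t > 0} \le \E{X_t} = \nu_\xi^t\) gives the upper bound in~\eqref{IFWMJ}, and since the event in~\eqref{PAPPP} is contained in \([X_t > 0]\), the same bound applies there.

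For the lower bound in~\eqref{IFWMJ}, I would set \(u_t \coloneqq \p{X_t > 0}\) and iterate the classical recursion \(u_t = 1 - h(1 - u_{t-1})\). Taylor expanding \(h\) at \(1\), using \(h'(1) = \nu_\xi\) and \(h''(1) = \E{\xi^2} - \nu_\xi < \infty\) under the second-moment assumption of~\autoref{cond:main}, gives \(u_t = \nu_\xi u_{t-1}\bigl(1 - \bigO{u_{t-1}}\bigr)\). Dividing by \(\nu_\xi^t\) and telescoping, since \(\sum_s u_s \le \sum_s \nu_\xi^s < \infty\), the product \(u_t/\nu_\xi^t = \prod_{s < t}(1 - \bigO{u_s})\) converges to a strictly positive constant \(c\). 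This is the Kolmogorov--Yaglom estimate \(u_t \sim c\,\nu_\xi^t\), from which~\eqref{IFWMJ} follows.

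For the lower bound in~\eqref{PAPPP}, write
\begin{equation}
    \p{[Y_t \le \omega(t)] \cap [X_t > 0]} \ge u_t - \p{[Y_t > \omega(t)] \cap [X_t > 0]},
\end{equation}
and bound the subtracted term by \(\E{Y_t \I{X_t > 0}}/\omega(t)\) via Markov. Conditioning on \(X_i\) and using the union bound \(\p{X_t > 0 \given X_i = k} \le k\,u_{t-i}\), we obtain \(\E{X_i \I{X_t > 0}} \le u_{t-i} \E{X_i^2}\). In the subcritical regime, \(\V{X_i} = \bigO{\nu_\xi^i}\) and \(\E{X_i}^2 = \nu_\xi^{2i} = \bigO{\nu_\xi^i}\), so \(\E{X_i^2} = \bigO{\nu_\xi^i}\); combined with \(u_{t-i} \le \nu_\xi^{t-i}\), this gives \(\E{X_i \I{X_t > 0}} = \bigO{\nu_\xi^t}\) uniformly in \(i\). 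Summing over \(i \in \{0, \ldots, t\}\) yields \(\E{Y_t \I{X_t > 0}} = \bigO{t\,\nu_\xi^t}\), so \(\p{[Y_t > \omega(t)] \cap [X_t > 0]} = \bigO{t\,\nu_\xi^t/\omega(t)} = \smallo{\nu_\xi^t}\) because \(\omega(t)/t \to \infty\). Combined with \(u_t \ge c\,\nu_\xi^t\) from the previous paragraph, this gives \(\p{[Y_t \le \omega(t)] \cap [X_t > 0]} \ge (c - \smallo{1})\nu_\xi^t\), and~\eqref{PAPPP} follows.

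The main obstacle is that one really needs the sharp lower bound \(\p{X_t > 0} = \bigOm{\nu_\xi^t}\), not merely \(\p{X_t > 0}^{1/t} \to \nu_\xi\): the naive unconditional Markov bound \(\p{Y_t > \omega(t)} \le \E{Y_t}/\omega(t) = \bigO{1/\omega(t)}\) (note \(\E{Y_t} = \sum_{i \le t}\nu_\xi^i\) is bounded in the subcritical regime) is far too weak to compete with \(\nu_\xi^t\). Both the Kolmogorov--Yaglom estimate and the conditional second-moment bound rely on the finite variance of \(\xi\), provided in our setting by~\autoref{cond:main}.
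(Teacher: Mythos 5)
Your argument is mathematically sound but proves the theorem only under the extra hypothesis $\E{\xi^2}<\infty$, which is not part of the statement. The theorem assumes only $\nu_{\xi}\in(0,1)$: your Kolmogorov-type recursion for $u_t=\p{X_t>0}$ needs $h''(1)<\infty$ to control the Taylor remainder, and your bound $\E{X_i\I{X_t>0}}\le u_{t-i}\,\E{X_i^2}$ needs $\V{\xi}<\infty$ to get $\E{X_i^2}=\bigO{\nu_{\xi}^i}$. Neither is available in general.

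The claim that the missing moment is ``provided in our setting by \autoref{cond:main}'' is incorrect. In the paper the theorem is applied to offspring laws approximating $\dinp$, and
\(
\E{(\dinp)^2}=\E{D^-(D^+)^2}/\lambda
\)
is a \emph{third} mixed moment of $D$, which \autoref{cond:main} does not control; the condition only bounds $\E{D^-D^+}$, $\E{(D^+)^2}$ and $\E{(D^-)^2}$, and one can easily build degree laws satisfying it with $\E{D^-(D^+)^2}=\infty$. The paper closes exactly this gap with a truncation argument: replace $\xi$ by $\bar\xi=\min\{\xi,M\}$, so that $\bar\xi$ has all moments; apply the finite-moment estimate (Athreya--Ney~I.11.1 for~\eqref{IFWMJ} and Pakes' theorem for~\eqref{PAPPP}, or your self-contained versions thereof) to the truncated process; and observe that the event that the first $\Theta(Mt)$ nodes explored in BFS order have offspring at most $M$ has probability at least $(1-\delta)^t$ with $\delta\to 0$ as $M\to\infty$, which is then absorbed into the $\nu_{\xi}^t$ rate. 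Your proof becomes complete once this truncation step is added. As a side remark, your elementary derivation of~\eqref{PAPPP} from the conditional second moment of $Y_t$ is a nice, self-contained substitute for the appeal to Pakes and would be worth keeping in the finite-variance case.
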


\begin{proof}
    First note that \(\E{X_{t}} = \nu_{\xi}^{t}\), by Markov's inequality \(\p{X_{t} > 0} \le
    \nu_{\xi}^{t}\). Thus it suffices to prove a lower bound in all cases discussed below.

    We first prove \eqref{IFWMJ}.
    If \(\ee{\xi \log \xi} < \infty\), then it follows from Theorem I.11.1 in \cite{athreya1972}
    that
    \begin{equation}\label{LMDTV}
        \lim_{t\to \infty} 
        \frac{
            \p{X_{t} > 0}
        }{
            \nu_{\xi}^{t}
        }
        = c > 0,
    \end{equation}
    for some constant \(c\). From this \eqref{IFWMJ} follows immediately.

    If \(\ee{\xi \log \xi} = \infty\), fix a large \(M>0\), consider the distribution \(\bar{\xi}=\min\{\xi,M\}\) and let
    \( (\bar{X}_{t})_{t \ge 0}\) be the branching process with offspring distribution \(\bar{\xi}\).
    Since \(\bar{\xi}\) is bounded, it follows from \eqref{LMDTV} that
    \begin{equation}\label{HQRAA}
        \liminf_{t \ge 0} \p{X_{t} > 0}^{1/t}
        \ge 
        \liminf_{t \ge 0} \p{\bar{X}_{t} > 0}^{1/t}
        =
        \ee{\bar{\xi}}
        \to \nu_{\xi}
        .
    \end{equation}
    as \(M \to \infty\). This proves the lower bound in \eqref{IFWMJ}.

    For \eqref{PAPPP}, first assume that \(\ee{\xi^{2}} < \infty\). In this case, Theorem
    1 of \cite{pakes1971} states that
    \begin{equation}\label{VNTMA}
        \frac{(Y_{t} \given X_{t} > 0)}{t}
        \to
        c_{\xi}
        \coloneqq
        1+
        \frac{\ee{\xi(\xi-1)}}{\nu_{\xi}(1-\nu_\xi)}
        ,
    \end{equation}
    in probability.
    Therefore
    \begin{equation}\label{JVQPR}
        \p{[Y_{t} < \omega(t)] \cap [X_{t} > 0]}
        =
        \p{Y_{t} < \omega(t) \cond X_{t} > 0}
        \p{X_{t} > 0}
        =
        (1+o(1)) ( (1+o(1)) \nu_\xi)^{t}
        ,
    \end{equation}
    from which \eqref{PAPPP} follows immediately.

    In the case that \(\ee{\xi^{2}} = \infty\), we again use a truncation argument. Let \(\bar{\xi}\) 
    and \( (\bar{X_{t}})_{t \ge 0}\) be as above. Let \(\bar{Y_{t}}=\sum_{i=1}^{t} \bar{X}_{i}\).
    Let \(A_{s}\) be the
    event that the first \(s\) nodes in BFS order in \( (X_{t})_{t \ge 0}\) have degree at most
    \(M\). Then
    \begin{equation}\label{HXDAU}
        \begin{aligned}
            \p{[Y_{t} < \omega(t)] \cap [X_{t} > 0]}
            &
            \ge
            \pp{[Y_{t} < 2 c_{\bar{\xi}}t] \cap [X_{t} > 0] \given A_{2 c_{\bar{\xi}}t}}
            \pp{A_{2 c_{\bar{\xi}}t}}
        \end{aligned}
    \end{equation}
    By Markov inequality, \(\p{\xi \ge M} \le \nu_\xi/M \leq  1/M\). Indeed, it is well-known that
    \(\ee{\xi} < \infty\) actually implies that \(\p{\xi \ge M} = o(1/M)\), where the asymptotics is
    as \(M \to \infty\). Since \(\bar{\xi} \leq M\), 
    \begin{equation}\label{AUYEN}
        c_{\bar{\xi}} = O(\ee{\bar{\xi}^{2}}) = O(M \ee{\bar{\xi}}) = O(M).
    \end{equation}
    Therefore
    \begin{equation}\label{GWBML}
        \pp{A_{2 c_{\bar{\xi}}t}}
        \ge
        (1-\p{\xi \ge M})^{2 c_{\bar{\xi}} t}
        \ge
        (1-o(1/M))^{O(M t)}
        \ge
        (1-\delta)^{t}
        ,
    \end{equation}
    for all \(\delta > 0\), provided that \(M\) is large enough. Also by choosing \(M\) large enough, we can get
    \(\ee{\bar{\xi}} > (1-\delta/2)\nu_\xi\) for all \(\delta > 0\).
    As \(\ee{\bar{\xi}^2}<\infty\), it follows from  \eqref{JVQPR} that
    \begin{equation}\label{PFMUB}
        \begin{aligned}
        \pp{[Y_{t} < 2 c_{\bar{\xi}}t] \cap [X_{t} > 0] \given A_{2 c_{\bar{\xi}}t}}
        &
        =
        \pp{[\bar{Y}_{t} < 2 c_{\bar{\xi}}t] \cap [\bar{X}_{t} > 0]}
        \\
        &
        =
        (1+o(1)) ( (1+o(1)) \ee{\bar{\xi}})^{t}
        \\
        &
        \ge
        (1-\delta) ((1-\delta)\nu_\xi)^{t}
        ,
        \end{aligned}
    \end{equation}
    for all \(t\) large enough.
    Putting \eqref{GWBML} and \eqref{PFMUB} into \eqref{HXDAU}, we have
    \begin{equation}\label{DUFVP}
        \liminf_{t \to \infty} 
        \p{[Y_{t} < \omega(t)] \cap [X_{t} > 0]}^{1/t}
        \ge
        (1-\delta)^{2} \nu_\xi.
    \end{equation}
    Since \(\delta\) is arbitrary, we are done with \eqref{PAPPP}.
\end{proof}

\section{Size-biased distributions}
\label{sec:size:bias}

We will apply the results on branching processes obtained in~\autoref{sec:branching} to distributions arising from $D$.
Define the \emph{in- and out-size biased} distributions of
\(D_{n}\), denoted by \( \dnin\) and \( \dnout\) respectively, by
\begin{equation}
    \p{\dnin=(k-1,\ell)}=\frac{k n_{k,\ell}}{m_{n}},
    \qquad
    \p{\dnout=(k,\ell-1)}=\frac{\ell n_{k,\ell}}{m_{n}}.
\end{equation}
In other words, if we choose a head $e^-$ uniformly at random (say $v$ is its incident node) and look at the number of heads/tails incident to $v$ different from $e^-$, what we get is a random pair of integers distributed as \( \dnin\).
Similarly, we get \( \dnout\) choosing a tail uniformly at random.

We also define the \emph{in- and out-size biased} distributions of \(D\), denoted by \(D_{\tin}\) and \(D_{\tout}\) respectively, by
\begin{equation}
\p{{D}_{\tin}=(k-1,\ell)} = \frac{k \lambda_{k,\ell}}{\lambda}
,
\qquad
\p{{D}_{\tout}=(k,\ell-1)} = \frac{\ell \lambda_{k,\ell}}{\lambda}
.
\end{equation}
Then, by (i) of \autoref{cond:main}, 
\( \dnin \to \din \) and \(\dnout \to \dout\) in distribution,
and by (iii) of \autoref{cond:main},
\begin{equation}\label{eq:dnin:nu}
    \lim_{n \to \infty}
    \E{
        \dnin^{+}
    }
    =
    \lim_{n \to \infty}
    \E{
        \dnout^{-}
    }
    =
    \E{\dinp}
    =
    \E{\doutm}
    =    
    \frac{\mathbb{E}\left[D^+ D^-\right]}
    \lambda
    =
    \nu
    .
\end{equation}

Alternatively, one can define the size-biased distributions of $D$ using generating functions. The bivariate probability generating function of \(D\) is 
\begin{equation}
f(z,w)\coloneqq \sum_{k,\ell} z^k w^\ell \lambda_{k,\ell}.
\end{equation}
The distributions \(\din\) and \(\dout\) have bivariate probability generating functions respectively
\begin{equation}
    f_{\tin}(z,w) = \frac{1}{\lambda} \frac{\partial f}{\partial z} 
    ,
    \qquad
    f_{\tout}(z,w) = \frac{1}{\lambda} \frac{\partial f}{\partial w} 
    .
\end{equation}
Note that \({\pdv{f}{z}} (1,1)= \pdv{f}{w} (1,1) =\lambda\), so \(f_{\text{in}}(1,1)=f_{\text{out}}(1,1)=1\).
(This shows that \(\din\) and \(\dout\) are indeed probability distributions.)
Similarly, the probability generating functions of \(\dinp\) and \(\doutm\) are \(f_\tin (1,w)\) and \(f_\tout (z,1)\) respectively.

We define
\begin{equation}
g(z,w)\coloneqq
\frac{\partial^2 f}{\lambda\,\partial z\partial w}
.
\end{equation}
Then
\begin{align}\label{eq:supercri:1}
    g(1,1) = \pdv{f_{\tin}}{w}(1,1) = \pdv{f_{\tout}}{z}(1,1) =  \dsE[\dinp] = \dsE[\doutm] = \nu.
\end{align}

Since \(\nu > 1\), there exists a unique solution \(\rho_\tin\in [0,1)\) of
\(f_\tin(1,w)=w\).  Similarly, there is a unique solution \(\rho_{\tout} \in [0,1)\) of
\(f_{\tout}(z,1)=z\).

The following is classical from branching process theory (see, e.g., Corollary 4.2 in
\cite{bordenave2016} or Theorem 3.1 in \cite{vanderhofstad2020}):
\begin{thm}\label{thm:GW:limit}
    Assume \autoref{cond:main} and \(\nu>1\). Let \(\rho_\tin,\rho_\tout \in [0,1)\) be the unique roots of
    \(f_\tin(1,w)=w\) and \(f_\tout (z,1)=z\), respectively.
Then \(s_{+} \coloneqq 1-\rho_\tin\) and \(s_{-} \coloneqq 1-\rho_\tout\) are the survival
probabilities of the branching processes with distribution \(D_\tin^+\) and \(D_\tout^-\)
respectively.
\end{thm}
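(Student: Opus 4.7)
The plan is a direct application of the classical characterization of extinction probabilities of Galton-Watson branching processes, combined with the identification of the probability generating functions of the marginals \(\din^{+}\) and \(\dout^{-}\) as \(f_\tin(1,w)\) and \(f_\tout(z,1)\), respectively. Since these facts are essentially textbook material, the argument will be short.

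First I would read off the relevant offspring pgfs. Setting \(z=1\) in the bivariate pgf of \(\din\) gives \(h_+(w)\coloneqq\E{w^{\din^+}}=f_\tin(1,w)\), and symmetrically the pgf of \(\dout^{-}\) is \(h_-(z)\coloneqq f_\tout(z,1)\). By~\eqref{eq:supercri:1}, we have \(h_+'(1)=h_-'(1)=\nu>1\), so the two branching processes are supercritical. Moreover, \autoref{cond:main}(iii) ensures that \(\E{(\din^{+})^{2}}\) and \(\E{(\dout^{-})^{2}}\) are finite, so the pgfs are twice continuously differentiable on \([0,1]\).

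Second, I would invoke the classical fact (see, e.g., Theorem 3.1 in~\cite{vanderhofstad2016}) that for a Galton-Watson tree with offspring pgf \(h\), the extinction probability \(q\) is the smallest solution of \(h(q)=q\) in \([0,1]\); when \(h'(1)>1\), this smallest solution lies in \([0,1)\). Applying this to \(h_+\) and \(h_-\) shows that their extinction probabilities equal \(\rho_\tin\) and \(\rho_\tout\), respectively, so \(s_+=1-\rho_\tin\) and \(s_-=1-\rho_\tout\) are the corresponding survival probabilities, as claimed.

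The only point to be careful with is the uniqueness of the root \(\rho_\tin\in[0,1)\) of \(f_\tin(1,w)=w\) asserted by the statement (and similarly for \(\rho_\tout\)). This follows from standard convexity: \(h_+\) is non-decreasing and convex on \([0,1]\) with \(h_+(1)=1\) and \(h_+'(1)=\nu>1\), so the equation \(h_+(w)=w\) has exactly one root \(\rho_\tin\) in \([0,1)\) besides \(1\), and the same argument applies to \(h_-\). There is no substantive obstacle in the proof; the theorem is essentially a reformulation of branching process folklore in the notation of the bivariate pgf \(f\).
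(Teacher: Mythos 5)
Your argument is correct and is essentially the standard branching-process argument that the paper invokes by reference (it cites~\cite{bordenave2016} and~\cite{vanderhofstad2020} rather than giving a proof). The identification of the offspring pgfs as \(f_\tin(1,\cdot)\) and \(f_\tout(\cdot,1)\), the use of~\eqref{eq:supercri:1} to see that both processes are supercritical, and the appeal to the classical extinction-probability theorem all match what the paper implicitly relies on.

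One side remark is wrong, though it is not used: \autoref{cond:main}(iii) does \emph{not} ensure \(\E{(\dinp)^{2}}<\infty\). Indeed \(\E{(\dinp)^2}=\frac{1}{\lambda}\E{D^{-}(D^{+})^{2}}\), a mixed third-moment quantity not controlled by (iii). This does no harm: twice continuous differentiability of \(h_+\) on the closed interval \([0,1]\) is not required anywhere. Any pgf is automatically convex on \([0,1]\) (its power-series coefficients are nonnegative), and \(h_+'(1^-)=\nu>1\) exists by monotone convergence under \autoref{cond:main}(iii); that is all the convexity/uniqueness argument needs. You may simply delete the sentence about second moments.
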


{Recall the definitions $\hnu_+=g(1,1-s_+)$ and $\hnu_-=g(1-s_-,1)$ given in~\eqref{eq:hnu}. Let \(\hat{D}_{\tin}^{+}\) and \(\hat{D}_{\tout}^{-}\) be the conjugate distributions of \(\dinp\)
and \(\doutm\) respectively, defined as in~\eqref{eq:conj}. It is easy to check that 
\( \ee{\hat{D}_{\tin}^{+}}=\hnu_{+} \)
and
\( \ee{\hat{D}_{\tout}^{-}}=\hnu_{-}\). Note that \eqref{PXNEC} implies that \(\hnu_{+}, \hnu_{+} \in [0,1)\).}

\begin{rem}
While \(  \E{\dinp}= \E{\doutm}= \nu\), in general, \(\hnu_{+}\) and \(\hnu_{-}\)  are different. As
an example, fix an integer $\lambda\geq 2$,  let \(D^+\) be constant \(\lambda\) and let
\(D^-\) have a Poisson distribution with expectation \(\lambda\). Then \(\nu=\lambda\), \(D^+_\tin=
D^+\)  and \(D^-_\tout= D^-\). Thus, \(s_{+}=1\) and \(s_{-}<1\), so \(\hnu_{+}=0\) and
\(\hnu_{-}>0\). 

\end{rem}

\section{Exploring the graph}\label{sec:coupl}

We will explore \(\vecGn\) following a Breadth First Search (BFS) order. This technique to explore
vertex-neighbourhoods of a vertex is standard in the study of random graphs, see, e.g.,
\cite[Chapter~4]{vanderhofstad2016}.  However, in this paper it will be more convenient to study the
edge-neighbourhoods of a half-edge.  In this section we describe only the out-neighbourhoods of tails
since the study of in-neighbourhoods of heads is identical with only the exploration direction
reversed.

\subsection{The exploration process conditioning on a partial pairing}
\label{sec:explore}

For a set of nodes \(\cI\), let \(\cE^{\pm}(\cI)\) denote the set of heads and tails incident to
the nodes in \(\cI\). When \(\cI = \{v\}\), we also use \(\cE^{\pm}(v)=\cE^{\pm}(\cI)\). Let
\(\cE^{\pm}\coloneqq\cE^{\pm}([n])\) denote the set of all heads and tails respectively. For \(\cX\subseteq \cE^\pm\), let \(\cV(\cX)\) denote the set of vertices incident to $\cX$. When $\cX=\{e\}$, we also use \(v(e)\) to denote the only element of \(\cV(\cX)\).

Let \(H\) be a partial pairing of \(\cE^{\pm}\).
Let \(\cP^{\pm}(H)\subseteq \cE^{\pm}\) be the set of heads and tails that have been paired in \(H\) and write \(\cV(H)=\cV(\cP^\pm(H))\) for the set of nodes incident to \(\cP^{\pm}(H)\).  Let \(\cF^{\pm}(H)\coloneqq\cE^{\pm}(\cV(H))\setminus\cP^{\pm}(H)\) be the unpaired
heads and tails that are incident to \(\cV(H)\).  Let \(E_{H}\) denote the event that
\(H\) is part of the final half-edge pairing in \(\vecGn\).  We will explore the graph conditioning
on \(E_{H}\).

The exploration starts from an arbitrary tail \(e^{+} \in \cE^+\setminus \cE^+(\cV(H))\). Let \(v_{0}=v(e^+)\).  In this process, we create random pairings of half-edges one by one and keep
each half-edge in exactly one of the four states --- \emph{active, paired}, \emph{fatal} or \emph{undiscovered}.

More precisely, let \(\cA_i^{\pm }\), \(\cP_i^{\pm}\), \(\cF_{i}^{\pm}\) and  \(\cU_i^{\pm }\) denote
the set of heads and tails in the four states respectively after the \(i\)-th pairing of
half-edges. Initially we have 
\begin{equation}\label{eq:APUF}
    \cA_{0}^{+}=\{e^{+}\},\quad
    \cA_{0}^{-}=\cE^{-}(v_{0}) ,\quad
    \cP_0^{\pm}=\cP^{\pm}(H),\quad
    \cF_0^{\pm}=\cF^{\pm}(H),\quad
    \cU_{0}^{\pm}=\cE^{\pm} \setminus (\cA^{\pm}_{0} \cup    \cP^{\pm}_{0} \cup \cF_0^{\pm})    .
\end{equation}
Then we set \(i=1\) and run the following procedure: \leavevmode
\begin{enumerate}[\normalfont(i)]
    \item Let \(e_{i}^{+}\) be the tail which became active earliest in
        \(\cA_{i-1}^{+}\). (If multiple such tails exist, choose an arbitrary
        one among them. Note that \(e_{{1}}^{+}=e^{+}\).) 
    \item Pair \(e^{+}_{i}\) with a head \(e^{-}_{i}\) chosen uniformly at random from
        \(\cE^{-}\setminus \cP_{i-1}^{-}\), i.e., from all unpaired heads. Let
        \(\cP_{i}^{\pm} = \cP_{i-1}^{\pm} \cup \{e^{\pm}_{i}\}\).
    \item If \(e^{-}_{i} \in \cF_{i-1}^{-}\), then terminate; if \(e^{-}_{i} \in \cA_{i-1}^{-}\),
        then \(\cA_{i}^{\pm} = \cA_{i-1}^{\pm}\setminus \{e_i^\pm\}\); and if $e_i^-\in
        \cU^-_{i-1}$, then \(\cA_{i}^{\pm} = (\cA_{i-1}^{\pm} \cup \cE^{\pm}(v_{i})) \setminus
        \{e^{\pm}_{i}\}\)  where \(v_{i}=v(e^-_i)\).
        \item If \(\cA_{i}^{+}=\emptyset\), then terminate; otherwise set \(\cF_{i}^{\pm} = \cF_{i-1}^{\pm}\), $\cU_i^{\pm} =\cE^{\pm}\setminus (\cA^{\pm}_{i} \cup    \cP^{\pm}_{i} \cup \cF_i^{\pm})$, \(i=i+1\) and go to (i).
\end{enumerate}
In words, the exploration process exposes edge by edge of $\vecGn$ in a BFS order and stops either
when it hits $\cV(H)$ or when all tails that can be reached from $e^+$ have been paired.

In parallel to the exploration process, we construct a sequence of rooted trees
\(T_{e^+}(i)\){, whose nodes represent tails in $\cE^+$}. Let \(T_{e^+}(0)\) be a tree with a
single node corresponding to \(e^{+}\). We construct \(T_{e^+}(i)\) as follows:  if \(e_{i}^{-} \in
\cU_{i-1}^{-}\), then construct \(T_{e^+}(i)\) from  \(T_{e^+}(i-1)\) by adding
\(\abs{\cE^{+}(v_{i})}\) child nodes to the node representing \(e_{i}^{+}\), each one representing a
tail in \(\cE^{+}(v_{i})\); otherwise, let \(T_{e^+}(i)=T_{e^+}(i-1)\). See \autoref{fig:explore}
for an example of the exploration process and the corresponding tree.

Given half-edges $e_1,e_2$, we define the distance from $e_1$ to $e_2$, denoted by
$\dist(e_1,e_2)$, as the graph distance from $v(e_1)$ to $v(e_2)$ in \(\vecGn\).
For example, in \autoref{fig:explore},
\(\dist(e_{1}^{+}, e_{5}^{+})=2\) and \(\dist(e_{2}^{+}, e_{2}^{-})=1\).

\begin{obs}\label{rem:distances}
    Note that the tree $T_{e^+}(i)$ preserves
    distances of tails in \(\vecGn\): if a node corresponding to a tail is at distance \(t\) from the root,
    then the tail is at distance \(t\) from \(e^{+}\) in \(\vecGn\). Therefore, the number of
    nodes in the \(t\)-th level of the tree is the number of tails in \(\cE^+\) at
    distance \(t\) from $e^+$.
\end{obs}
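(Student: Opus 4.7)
The plan is to prove the distance-preservation claim by induction on the depth of a node in $T_{e^+}(i)$, leveraging the fact that the exploration is a Breadth-First Search on tails. The base case is immediate: the root of $T_{e^+}(i)$ corresponds to $e^+$, which is incident to $v_0=v(e^+)$, hence at distance $0$ from itself.

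For the inductive step, I would consider a node at depth $t+1$. By the tree-construction rule, such a node is a child of some $e_j^+$ at depth $t$, added at step $j$ precisely because $e_j^-\in\cU_{j-1}^-$, i.e., $v_j=v(e_j^-)$ is a newly discovered vertex. The node represents some tail in $\cE^+(v_j)$, so it suffices to show $\dist(v_0,v_j)=t+1$. Combining the inductive hypothesis $\dist(v_0,v(e_j^+))=t$ with the existence of the edge from $v(e_j^+)$ to $v_j$ in $\vecGn$ immediately gives the upper bound $\dist(v_0,v_j)\le t+1$.

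The main (and essentially only) subtle point is the matching lower bound, and here the BFS ordering does all the work. Since tails are paired in the order in which they became active, by the time $e_j^+$ is paired every tail that became active because its vertex was reached at graph-distance at most $t$ has already been processed; equivalently, every vertex at distance $\le t$ from $v_0$ has already appeared as some $v_k$ with $k<j$. The condition $e_j^-\in\cU_{j-1}^-$ therefore excludes $v_j$ from this set, forcing $\dist(v_0,v_j)\ge t+1$, and equality follows. The counting statement is then a direct consequence: the first time the exploration pairs a head incident to a given vertex $v$ at distance $t$, all tails of $v$ are attached as depth-$t$ children of some node in $T_{e^+}(i)$, while any later pairing reaching $v$ has $e_k^-\in\cA_{k-1}^-\cup\cF_{k-1}^-$ and does not modify the tree; once the BFS has swept through level $t$, this gives a bijection between depth-$t$ nodes of the tree and tails in $\cE^+$ at distance $t$ from $e^+$. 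I do not anticipate any serious obstacle beyond carefully unpacking the BFS bookkeeping.
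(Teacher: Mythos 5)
This observation is stated without proof in the paper (it is treated as evident from the BFS construction), and your induction on tree depth together with the BFS ordering is the natural way to make it precise; the argument is essentially correct. Two small points to tighten. First, in the lower-bound step you assert that ``every vertex at distance $\le t$ from $v_0$ has already appeared as some $v_k$''; this is only true if ``distance from $e^+$'' is read as the length of a shortest directed walk whose \emph{first edge is the one containing $e^+$}, not literal vertex distance from $v_0$. The exploration starts from the single tail $e^+$, not from all of $\cE^+(v_0)$, so vertices reachable from $v_0$ only through one of its other tails are never discovered; indeed, with literal vertex distance the claimed bijection already fails at level $0$ as soon as $d^+_{v_0}>1$. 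This is the reading the paper implicitly uses (and its definition of $\dist$ for half-edges is similarly loose), so it is not a defect of your argument, but it is worth stating. Second, in the final sentence you say that a later pairing reaching $v$ with $e_k^-\in\cA_{k-1}^-\cup\cF_{k-1}^-$ ``does not modify the tree''; the case $e_k^-\in\cF_{k-1}^-$ in fact terminates the exploration by rule (iii), so only $e_k^-\in\cA_{k-1}^-$ is relevant here. Neither point changes the substance of the proof.
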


While $T_{e^+}(i)$ is an unlabelled tree, its set of nodes corresponds to the set of tails $\cP^+_i\cup\cA^+_i$. Therefore, we can assign a label \emph{paired} or \emph{active} to each node. 

\begin{figure}[ht]
\centering
\begin{subfigure}{.5\textwidth}
  \centering
  \includegraphics[width=0.8\linewidth]{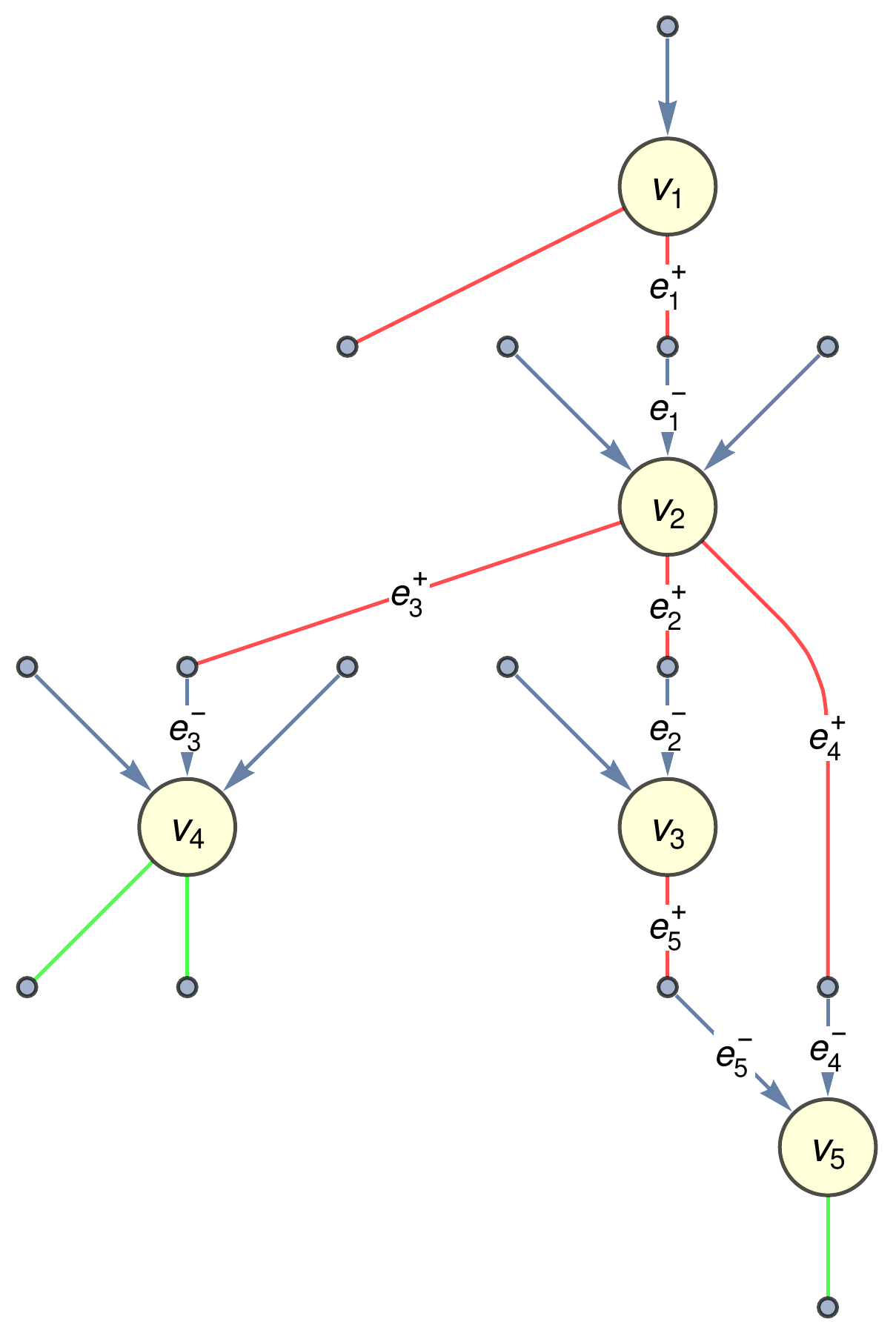}
\end{subfigure}%
\hfill
\begin{subfigure}{.5\textwidth}
  \centering
  \includegraphics[width=0.9\linewidth]{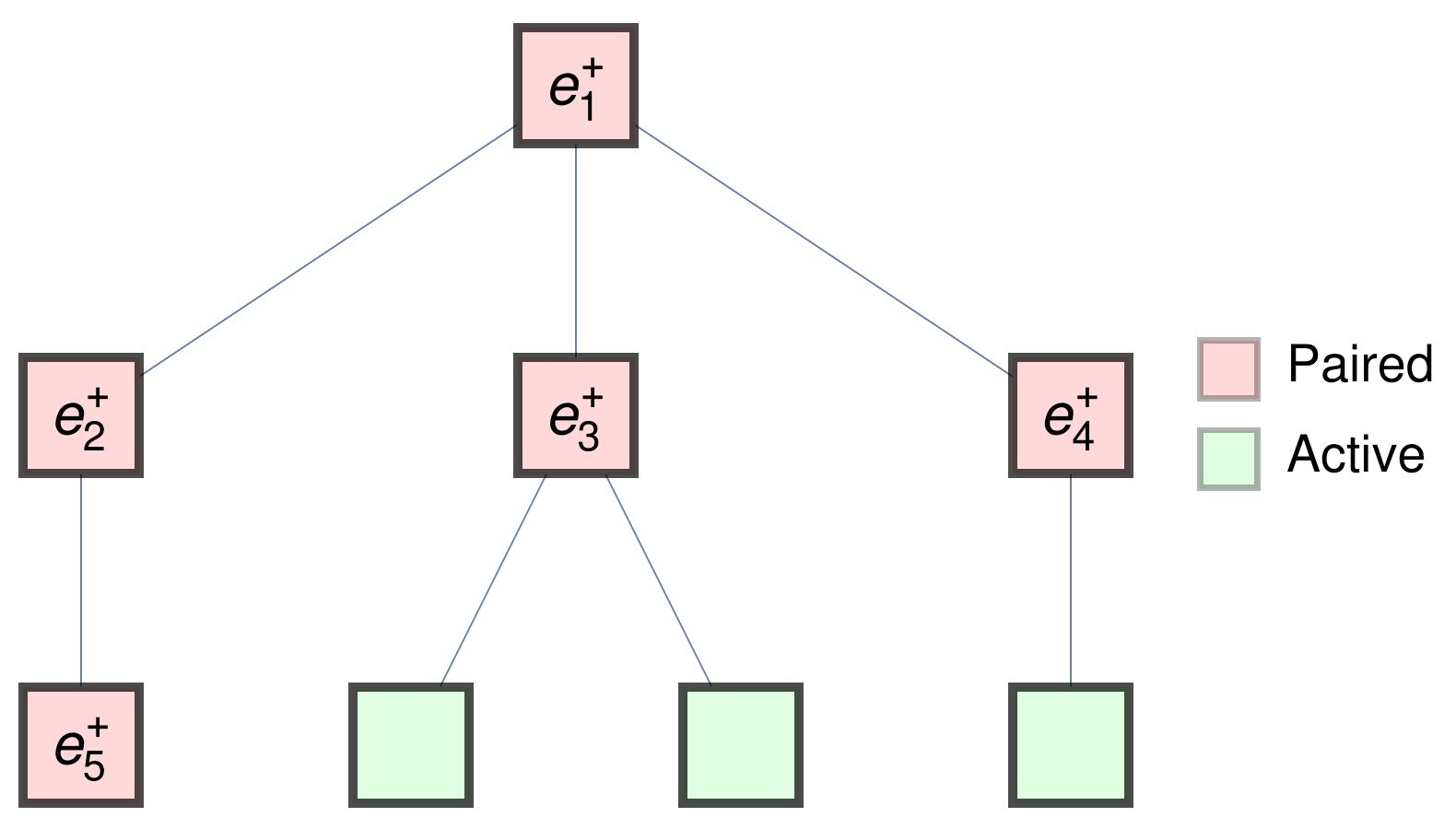}
\end{subfigure}
\caption{An ongoing exploration process and its associated tree}
\label{fig:explore}
\end{figure}

We split the exploration process into epochs. At the $t$-th epoch, we pair all the tails at distance $t$ from $e^+$. Let $i_t$ be the last step of epoch $t$. Then, $T_{e^+}(i_t)$ has the following properties: (i) has depth $t$; (ii) all nodes in the $t$-th level are active; and (iii) all nodes in the $j$-th level for $j<t$ are paired.

We call a rooted tree $T$ \emph{incomplete} if it satisfies (i)-(iii) (in the sense that the
subtrees rooted at the last level have not been decided yet). We let \(p(T)\) be the number of \emph{paired} nodes in \(T\). 

\subsection{Coupling the exploration and branching processes}

Let \(Q_n\coloneqq(D_n)_\tin^+\) be the distribution obtained by taking the marginal on the second component of the in-size biased distribution of \(D_n\); i.e., for all \(\ell\geq 0\),
\begin{equation}
    \p{Q_n=\ell} = q_{n,\ell} \coloneqq \frac{\sum_{k\geq 1}k n_{k,\ell}}{m_n}.
\end{equation}

Recall that in \autoref{sec:size:bias} it has been shown that \(Q_{n} \to \dinp\) in distribution
and in expectation. In particular, by \eqref{eq:dnin:nu} \(\bE[Q_n] \to \bE[\dinp] = \nu\).  Let
\(\hat{Q}_{n}\) be the conjugate of \(Q_{n}\).  It follows from \autoref{lem:conj} that \(\hat Q_n
\inlaw \dinpconj\) and \(\e[\hat Q_{n}] \to \bE[\hat D_\tin^+]=\hnu_{+}\).  Hence,
\(\bE[Q_n]= (1+o(1))\nu\) and \(\bE[ \hat Q_n]= (1+o(1))\hnu_{+}\). 

In order to transfer the results from branching processes to the graph exploration process, we need to
introduce two new probability distributions \(Q_n^{\downarrow}\) and \(Q_n^{\uparrow}\) by slightly
perturbing \(Q_n\).

{For \(\beta\in(0,1/10)\)} consider the
probability distribution
\(Q_n^\da=Q_n^\da (\beta)\) defined by
\begin{align*}
    \p{\Qnda = \ell}
    =
    q_{n,\ell}^\da \coloneqq
    \begin{cases}
        c^{\da} q_{n,\ell} & \text{if } q_{n,\ell} \geq n^{-2\beta} \text{ and }\ell\leq
        n^{\beta}
        \\
        0 & \text{otherwise}
    \end{cases}
\end{align*} 
where \(c^{\da}\) is a normalising constant. It is easy to check that \autoref{cond:main} implies that
\(c^{\da}=1+{o}(n^{-\beta})\) and
\begin{equation}
    \sum_{\ell> n^{\beta}} \ell q_{n,\ell}= o(1),
    \qquad
    \sum_{\ell: q_{n,\ell}<n^{-2\beta}} \ell q_{n,\ell} =  o(1).
\end{equation}
Therefore, \( \bE[Q_n^\da] \to \nu.  \)

Similarly, the probability distribution \(\Qnua = \Qnua(\beta)\) is defined by
\begin{equation}\label{VMLKM}
    \p{\Qnua = \ell}
    =
    q_{n,\ell}^\ua \coloneqq
    \begin{cases}
        c^{\ua} q_{n,\ell} & \ell \ge 1
        \\
        c^{\ua} q_{n,0}+n^{-1/2+2\beta} & \ell = 0
    \end{cases}
\end{equation}
where \(c^{\ua} = 1+O(n^{-1/2+2\beta})\) is a normalising constant. Again, by \autoref{cond:main},
\( \bE[Q_n^\ua] \to \nu.  \)

\begin{obs}\label{obs:cont}
    Let \(\hQnda\) be the conjugate distribution of \(\Qnda\).
    It follows from \autoref{lem:conj} that \(\hQnda \inlaw \dinpconj\) 
    and \(\e[\hat Q^{\da}_{n}] \to \bE[\hat D_\tin^+]=\hnu_{+}\). 
    So we can apply \autoref{thm:BP} to a
    branching process with offspring distribution \(Q_n^\da\)  by taking \(\hnu_{\xi}=(1+o(1))\hnu_{+}\). 
    The same applies to \(\Qnua\) and \(\hQnua\).
\end{obs}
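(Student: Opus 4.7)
The observation reduces to checking that \autoref{lem:conj} applies to the sequences $(\Qnda)$ and $(\Qnua)$ with common limit $\dinp$. So my plan is to verify the two hypotheses of that lemma — convergence in distribution and convergence in expectation — for each sequence. The final sentence about \autoref{thm:BP} is then immediate: supercriticality of $\Qnda$ (and of $\Qnua$) for $n$ large follows from $\E{\Qnda}, \E{\Qnua} \to \nu > 1$, and the parameter $\hnu_\xi$ appearing in \autoref{thm:BP} is, when instantiated with $\xi=\Qnda$, exactly $\E{\hQnda}$.

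For convergence in distribution I would argue via total variation. It has already been noted that $Q_n \inlaw \dinp$, so by the triangle inequality it suffices to bound $d_{TV}(\Qnda, Q_n)$ and $d_{TV}(\Qnua, Q_n)$. For $\Qnda$, this distance is at most $|1-c^{\da}|$ plus the mass removed on the two ``bad'' sets $\{\ell > n^{\beta}\}$ and $\{\ell:\, q_{n,\ell} < n^{-2\beta}\}$. The first quantity is $o(n^{-\beta})$ by the expansion of $c^{\da}$ stated in the text. Each of the two removed masses is $o(1)$: for $\ell\geq 1$ one has $q_{n,\ell} \leq \ell\, q_{n,\ell}$, so both sums are dominated by the two displayed bounds on $\sum \ell\, q_{n,\ell}$ immediately below the definition of $\Qnda$; the single atom at $\ell=0$ contributes at most $n^{-2\beta}=o(1)$ if it is removed. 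For $\Qnua$, the only change from $Q_n$ is an added mass of size $O(n^{-1/2+2\beta})$ at $\ell=0$, so the total variation distance is $o(1)$. Convergence in expectation is already recorded in the text right after the definitions: $\E{\Qnda}\to\nu$ and $\E{\Qnua}\to\nu$, and $\E{\dinp}=\nu$ by \eqref{eq:dnin:nu}.

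Applying \autoref{lem:conj} to each sequence then yields $\hQnda \inlaw \dinpconj$ and $\hQnua \inlaw \dinpconj$, both in distribution and in expectation, with limits $\E{\hQnda}, \E{\hQnua} \to \E{\dinpconj}=\hnu_+$. This is precisely the identification $\E{\hQnda}=(1+o(1))\hnu_+$ required to plug into \autoref{thm:BP}. There is no real obstacle here; the observation is a bookkeeping step whose whole purpose is to package the continuity-of-conjugate lemma together with the explicit tail control built into $\Qnda$ and $\Qnua$ into a form that can be transferred to the edge-exploration process in the next sections. The only point that one must be careful about is that the design choices $n^{\beta}$ and $n^{-1/2+2\beta}$ were made so that the perturbations are $o(1)$ in total variation while preserving the first moment and supercriticality, both of which have just been verified.
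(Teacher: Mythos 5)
Your proof is correct and takes essentially the same route as the paper: the observation is, as you say, a bookkeeping step that reduces to verifying the hypotheses of \autoref{lem:conj} for the perturbed distributions and then reading off the conclusion. The paper records the expectation convergence $\E{\Qnda}, \E{\Qnua} \to \nu$ explicitly right before the observation but leaves the distributional convergence $\Qnda, \Qnua \inlaw \dinp$ implicit; your total-variation argument fills in exactly that gap, correctly using $q_{n,\ell}\le \ell q_{n,\ell}$ for $\ell\ge 1$ together with the two displayed $o(1)$ sums and the $n^{-2\beta}$ bound on a possibly deleted atom at $\ell=0$ (and, for $\Qnua$, the $O(n^{-1/2+2\beta})$ perturbation at $\ell=0$ together with $c^\ua = 1 + O(n^{-1/2+2\beta})$). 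One small imprecision: you should also account for the rescaling by $c^\ua$ on $\ell\ge 1$ when estimating $d_{TV}(\Qnua, Q_n)$, not just the added atom at $0$, but since $|c^\ua - 1| = O(n^{-1/2+2\beta})$ this is harmless. The identification $\hnu_\xi = \E{\hQnda} = (1+o(1))\hnu_+$ and the supercriticality of $\Qnda, \Qnua$ for large $n$ are handled as in the paper.
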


Let \(\GW_{\xi}\) be a Galton-Watson tree with offspring distribution \(\xi\).  For an incomplete rooted tree $T$ of depth $t$, we use the notation
\(\GW_{\xi} \cong T\) to denote that \(T\) is a root subtree of \(\GW_{\xi}\) and all paired nodes of \(T\) have the same degree in \(\GW_{\xi}\). 

\begin{lemma}\label{lem:eq_T_GW}
    Let $\beta \in (0,1/10)$ and let \(H\) be a partial pairing with \(|\cV(H)|\le n^{1- 6\beta}\).
    For every incomplete tree \(T\) with
    \(p(T)\leq n^{\beta}\), we have
    \begin{equation}
        (1+o(1)) \p{\GW_{Q_n^\da{(\beta)}}\cong T} 
        \leq \p{T_{e^{+}}(p(T)) = T \cond E_{H}} 
        \leq  (1+o(1)) \p{\GW_{Q_n^{\ua}(\beta)}\cong T}
        ,
    \end{equation}
    where the implicit functions \(o(1)\) are uniform over all such \(T\) and \(H\).
\end{lemma}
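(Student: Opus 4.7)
The plan is to write both probabilities as products over the $p(T) \leq n^\beta$ paired steps and compare them factor-by-factor. By independence of offsprings in a Galton--Watson tree,
\begin{equation}
    \p{\GW_\xi \cong T} = \prod_{i=1}^{p(T)} \p{\xi = \ell_i},
\end{equation}
where $\ell_i$ denotes the number of children of the $i$-th paired node of $T$. For the exploration side, at step $i$ the tail $e_i^+$ is paired uniformly among the $m_n - h_H - (i-1)$ currently unpaired heads (with $h_H = |\cP^-(H)|$); consistency with $T$ at this step requires pairing with a head either of a previously undiscovered vertex of out-degree $\ell_i$, or (only when $\ell_i = 0$) of a half-edge in $\cA_{i-1}^- \cup \cF_{i-1}^-$. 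Moreover, fatal pairings can occur only at the final step $i = p(T)$, since any earlier fatal pairing would terminate the exploration before completing all $p(T)$ steps.

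Next I would apply \autoref{lem:s} to $\cS = \cV(H) \cup \{v_1, \dots, v_{i-1}\}$, whose size is at most $n^{1-6\beta} + n^\beta = O(n^{1-6\beta})$ (using $\beta < 1/7$), giving $d_\cS(1,0) = o(n^{1-3\beta}) = o(m_n n^{-\beta})$. This makes the denominator $m_n(1 + o(n^{-\beta}))$ uniformly in $i$. For $\ell_i \geq 1$, the numerator equals $m_n q_{n, \ell_i} - o(m_n n^{-\beta})$, yielding a per-step probability of $q_{n, \ell_i}(1 + o(n^{-\beta}))$ whenever $q_{n, \ell_i} \geq n^{-2\beta}$, and bounded above by this in general. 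For $\ell_i = 0$, the active-head self-intersection contribution $|\cA_{i-1}^-|/m_n$ is bounded by another application of \autoref{lem:s} to the discovered set of size $\leq n^\beta$, giving $o(n^{-1/2+\beta/2}) = o(n^{-1/2+2\beta})$ (since $\beta/2 < 2\beta$); this matches the order of the bonus added to $Q_n^\ua$ at $\ell = 0$.

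Taking products over the $p(T) \leq n^\beta$ steps, each $(1 \pm o(n^{-\beta}))$ per-step error compounds to a total $(1 + o(1))$ factor, and similarly $(c^\da)^{p(T)} = (1 + o(n^{-\beta}))^{n^\beta} = 1 + o(1)$ and $(c^\ua)^{-p(T)} = (1 - n^{-1/2+2\beta})^{-n^\beta} = 1 + o(1)$ (the latter using $\beta < 1/6$). For the lower bound, $\p{\GW_{Q_n^\da} \cong T} = 0$ as soon as some $\ell_i > n^\beta$ or $q_{n, \ell_i} < n^{-2\beta}$, so the bound is trivial there; otherwise both sides equal $(1 + o(1)) \prod_i q_{n, \ell_i}$. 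The main obstacle is the careful bookkeeping at $\ell_i = 0$ steps in the upper bound: the perturbation $n^{-1/2+2\beta}$ in the definition of $Q_n^\ua$ is precisely calibrated to dominate the cumulative active-head self-intersection contributions once the product is taken, explaining this otherwise ad hoc term, while the fatal contribution at the final step is handled using the observation that earlier fatal pairings would have already terminated the exploration.
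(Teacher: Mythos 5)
Your proof follows the paper's approach closely: write both probabilities as products of per-step conditional probabilities, apply \autoref{lem:s} to the set of exposed vertices to show the denominator is $m_n(1+o(n^{-\beta}))$, handle $\ell_i \ge 1$ by comparing with $q_{n,\ell_i}$ (the condition $q_{n,\ell_i}\ge n^{-2\beta}$ gives the lower bound for free since otherwise $q^\da_{n,\ell_i}=0$), and bound the extra $\ell_i=0$ contribution from active heads against the $n^{-1/2+2\beta}$ bonus in $Q_n^\ua$. All of this matches the paper, and the use of \autoref{lem:s} on the discovered set to bound $|\cA^-_{i-1}|=o(n^{1/2+\beta/2})$ is even a touch sharper than the paper's $p(T)\Delta_n = o(n^{1/2+\beta})$.

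However, your treatment of fatal pairings is incorrect and, if taken at face value, leaves a gap. You assert that for $\ell_i=0$ the tail may be paired with a half-edge in $\cA_{i-1}^- \cup \cF_{i-1}^-$, and that "fatal pairings can occur only at the final step $i=p(T)$." In fact a fatal pairing cannot occur at \emph{any} of the $p(T)$ steps: $T$ is an incomplete tree, so it has at least one active node at the last level, and a fatal pairing at step $p(T)$ terminates the process in step (iii) before $\cA_{p(T)}^\pm$ is set, so the resulting labelled tree has no active nodes and cannot equal $T$. The paper implicitly encodes this by working with $X_i = |\cA_i^+|-|\cA_{i-1}^+|+1$, which is undefined when $e_i^-$ is fatal. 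This matters quantitatively: had a fatal pairing at step $p(T)$ actually been allowed, the numerator for $\ell_{p(T)}=0$ would pick up an additional $|\cF^-_{p(T)-1}| \le d_{\cV(H)}(1,0) = o(n^{1-3\beta})$, contributing $o(n^{-3\beta})$ to the per-step probability; for $\beta < 1/10$ this is \emph{not} $O(n^{-1/2+2\beta})$, so it would not be absorbed by the bonus in $Q_n^\ua$ when $q_{n,0}$ is small, and your claim that this is "handled by the observation" is not justified — that observation only rules out earlier fatal pairings, it says nothing about the final one. The resolution is simply that the fatal term never appears; once you drop the erroneous claim, the rest of your computation coincides with the paper's.
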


\begin{proof}
    We start with the upper bound.  First, by \autoref{lem:s}, \(|\cV(H)|\le n^{1- 6 \beta}\)
    implies that
    \begin{equation}
        \abs{\cP^{-}(H)}
        \le
        d_{\cV(H)}(1,0)
        =
        o(n^{1-{3\beta}})
        .
    \end{equation}
    Let \(X_{i} = \abs{\cA_{i}^{{+}}}-\abs{\cA_{i-1}^{{+}}} +1\) be the number of tails that became active during the \(i\)-th step of the process. 
    Let \(\ell_i\) be the number of children of the \(i\)-th node in \(T\) in the
    BFS order.  Let \(E_{i} = \cap_{j = 1}^{i} [X_{j} = \ell_{j}]\).
    Let \(q_{n,\ell}(i)\coloneqq\p{X_i=\ell\mid E_{i-1}}\).
    Then for all \(\ell \ge 1\) and \(i \le p(T)\),
    \begin{align}\label{eq:rel_i_pnl}
        q_{n,\ell}(i) 
        \leq\frac{\sum_{k\geq 1} k\, n_{k,\ell}}{m_n - (i-1)-\abs{\cP^{-}(H)}} 
        =  (1+o(n^{-3\beta})) q_{n,\ell}^{} 
        =  (1+o(n^{-3\beta})) q_{n,\ell}^{\ua}
        . 
    \end{align}
    Recall that by \autoref{cor:D} we have \(\Delta_{n}=o(\sqrt{n})\).
    Since at most 
    \(p(T) \Delta_{n} = o(n^{1/2+\beta})\) heads are active, we also have
    \begin{align}\label{eq:rel_i_pn0}
        q_{n,0}(i) 
        \leq
        \frac{p(T) \Delta_{n}  + \sum_{k\geq 1} k\, n_{k,0}}{m_n - (i-1)-\abs{\cP^{-}(H)}} 
        =
        o(n^{-1/2+\beta})
        +
        (1+o(n^{-{3\beta}}))q_{n,0}^{} 
        \le
        (1+o(n^{-{\beta}}))q_{n,0}^{\ua} 
        ,
    \end{align}
    where the last step uses that \(q_{n,0}^{\ua} \geq  n^{-1/2+{2\beta}}\).
    It follows that
    \begin{equation}
        \p{T_{e^{+}}(p(T)) = T \cond E_{H}} 
        = \prod_{i=1}^{p(T)} q_{n,\ell_{i}}(i)
        \leq \prod_{i=1}^{p(T)} (1+o(n^{-\beta})) q_{n,\ell_i}^{\ua}
        = (1+o(1))\p{\GW_{Q_{n}^{\ua}(\beta)}\cong T}.
    \end{equation}

   We now prove the lower bound. We first show that \(q_{n,\ell}(i)\geq
    (1+o(n^{-\beta}))q^\da_{n,\ell}\). We may assume that \(\ell\leq n^{\beta}\) and
    \(q_{n,\ell}\geq n^{-2\beta}\), as otherwise \(q^\da_{n,\ell}=0\) and the claim holds trivially.  Since
    there are at most \(p(T) \Delta_{n} + d_{\cV(H)}(1,0) = o(n^{1-3\beta})\) heads in $\cA_{i-1}^{-} \cup \cP_{i-1}^{-}{\cup \cF_{i-1}^-}$ and at most \(m_n\) heads that
    have not been paired yet, we have for \(\ell \ge 0\),
    \begin{equation}
        q_{n,\ell}(i) \geq \frac{\sum_{k\geq 1} k n_{k,\ell} - p(T) \Delta_{n} - d_{\cV(H)}(1,0) }{m_n}
        = 
        q_{n,\ell}-o(n^{-{3\beta}})
        =
        (1+o(n^{-{\beta}})) q_{n,\ell}
        =
        (1+o(n^{-\beta}))q_{n,l}^{\da} ,
    \end{equation}
    where the last step uses that $q_{n,\ell}=(1+o(n^{-\beta}))q_{n,\ell}^\da$.    The rest of the
    argument is analogous to the upper bound.
\end{proof}

\section{Thin neighbourhoods}\label{sec:tower}

Given $e_1^\pm,e_2^\pm\in \cE^\pm$, recall that $\dist(e_1^\pm,e_2^\pm)$ is the distance between the nodes incident to them
in \(\vecGn\). Let \(\cN^\pm_{t}(e^{\pm})\) be the set of heads/tails at distance
\(t\) from a head/tail \(e^{\pm}\) in \(\vecGn\).  We will only use tail-neighbourhoods of tails or head-neighbourhoods of heads. Thus, we talk about the edge-neighbourhood of a tail/head and denote it by \(\cN_{t}(e^{\pm})\).

Throughout the rest of the paper, we fix 
\begin{equation}
\omega\coloneqq \log^6 n.
\end{equation}

\subsection{Supercritical case}
In this subsection, we assume that \(\nu > 1\). Let \(t_{\omega}(e^{\pm})\) be the first time that the edge-neighbourhood of \(e^{\pm}\) has size
at least \(\omega\), i.e., 
\begin{equation}
    t_{\omega}(e^{\pm})
    \coloneqq
    \inf
    \left\{ 
        t \ge 1:
        \abs{
            \cN_{t}(e^{\pm})
        }
        \geq \omega
    \right\}
    .
\end{equation}
We call \(t_{\omega}(e^{\pm})\) the \emph{expansion time} of $e^\pm$. If such expansion never
happens, let \(t_{\omega}(e^{\pm}) = \infty\).

When \(\hnu_{\pm}>0\) and \(\nu > 1\), 
we are interested in expansions which happen at around time
\begin{equation}\label{EOSJR}
    t^{\pm} \coloneqq \frac{\log(n)}{\log(1/\hnu_{\pm})}
    .
\end{equation}
More specifically, we will show that whp there is no expansion after time \( (1+\delta)t^{\pm}\) and
there exist out/in explorations expanding after time \( (1-\delta)t^{\pm}\), producing atypically thin neighbourhoods.

When \(\hnu_{\pm} = 0\), we will show that whp, the expansion of out/in-neighbourhood
happens before time \(\delta \log(n)\) for all \(\delta > 0\).

The proof relies on \autoref{lem:eq_T_GW}, which allows approximating the probability of finding a
thin neighbourhood with the probability of the corresponding event in a branching process.
\begin{prop}\label{prop:towers2}
    Assume that \(\nu > 1\).
    Let $\gamma>0$ and \(e^{\pm}\in \cE^{\pm}\).  Let
    \begin{equation}
        A(e^{\pm}, t)
        =
        \left[
        \cap_{r=1}^{t}
        [
           0< \abs{\cN_{r}(e^{\pm})}<\omega
		]        
        \right]
    \end{equation}
    Then uniformly for every partial pairing \(H\) with \(\abs{\cV(H)}
    \le n^{1-\gamma}\) and every \(t = \Theta(\log n)\), we have
    \begin{equation}\label{eq:tower:p}
        \p{
            A(e^{\pm}, t)
        \cond E_{H}}
        =
        \begin{cases}
            \hnu_{\pm}^{(1+o(1))t}
            &
            (\hnu_{\pm}>0)
            \\
            O(\zeta^{t})
            &
            (\hnu_{\pm}=0)
        \end{cases}
    \end{equation}
    for all \(\zeta>0\).
\end{prop}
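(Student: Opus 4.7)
The strategy is to couple the edge-exploration process with the auxiliary branching processes $\GW_{\Qnda}$ and $\GW_{\Qnua}$ via Lemma~\ref{lem:eq_T_GW}, and then transfer the thinness probabilities using Theorem~\ref{thm:BP}. I describe the argument for tails ($e^{+}$); the head case is symmetric, reversing the edge direction and replacing $\Qnda, \Qnua, \hnu_{+}$ by their in-neighbourhood analogues. Choose $\beta \in (0,\gamma/6)$, so that $\abs{\cV(H)} \le n^{1-\gamma} \le n^{1-6\beta}$ as required by Lemma~\ref{lem:eq_T_GW}. On the event $A(e^{+}, t)$, every level of the exploration tree has fewer than $\omega = \log^{6} n$ tails, so after $t = \Theta(\log n)$ epochs the total number of paired tails is at most $t \omega = O(\log^{7} n) = o(n^{\beta})$, placing us squarely in the regime where the coupling applies.

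Let $\cT(t)$ be the collection of incomplete rooted trees of depth $t$ whose level $r$ has between $1$ and $\omega-1$ nodes for every $r = 1, \ldots, t$. By Observation~\ref{rem:distances}, the event $A(e^{+}, t)$ partitions as $A(e^{+}, t) = \bigcup_{T \in \cT(t)} \{T_{e^{+}}(p(T)) = T\}$, and summing the uniform bound from Lemma~\ref{lem:eq_T_GW} over this partition gives
\begin{equation}
(1+o(1))\, \p{\GW_{\Qnda} \in \cT(t)}
\le
\p{A(e^{+}, t) \cond E_{H}}
\le
(1+o(1))\, \p{\GW_{\Qnua} \in \cT(t)},
\end{equation}
uniformly in $H$, where $\GW_{\xi} \in \cT(t)$ abbreviates the branching-process event $\cap_{r=1}^{t}[0 < X_{r} < \omega]$. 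Now I apply Theorem~\ref{thm:BP} with $\xi = \Qnda$ or $\Qnua$. By Observation~\ref{obs:cont}, the offspring means converge to $\nu > 1$ and the conjugate means converge to $\hnu_{+}$. When $\hnu_{+} > 0$, part~(i) combined with Remark~\ref{rem:seneta} produces an upper bound $C \hnu_{+}^{(1+o(1))(t - t_{\xi}(\omega))}$ and a matching lower bound $c\, \hnu_{+}^{t}$, where $t_{\xi}(\omega) = O(\log\log n)$. Since $t = \Theta(\log n)$ and $t_\xi(\omega) = o(t)$, both sides collapse to $\hnu_{+}^{(1+o(1))t}$. When $\hnu_{+} = 0$, part~(ii) supplies the super-exponentially small upper bound $\exp\{-c\, 2^{t-t_{\xi}(\omega)}\}$, which is dominated by $\zeta^{t}$ for any fixed $\zeta > 0$ since $2^{t - O(\log\log n)}$ dwarfs $t \log(1/\zeta)$.

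The main point of care is uniformity: the constants $c, C$ in Theorem~\ref{thm:BP} a priori depend on the offspring distribution, which here varies with $n$. However, since $\Qnda$ and $\Qnua$ converge in distribution and in first moment to $\dinp$, the quantities driving these constants (the survival probability, the concentration constant $c_{0}$ from~\eqref{eq:cond_deg}, and the variance bounds used in~\eqref{eq:X:r:up}) converge to positive finite limits, so $c, C$ remain bounded away from $0$ and $\infty$ in $n$. Hence $\log c$ and $\log C$ are $O(1) = o(t)$, and the $(1+o(1))$ in the exponent of $\hnu_{+}$ absorbs them; the same reasoning covers the degenerate $\hnu_{+} = 0$ case.
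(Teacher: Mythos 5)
Your proof follows essentially the same route as the paper's: decompose $A(e^{+},t)$ over incomplete trees, apply \autoref{lem:eq_T_GW} to pass to the branching processes $\GW_{Q_n^{\da}}$ and $\GW_{Q_n^{\ua}}$, and then invoke \autoref{thm:BP} together with \autoref{rem:seneta}. Your discussion of uniformity of the constants $c,C$ is a reasonable point that the paper handles implicitly.

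There is, however, one technical error. In the case $\hnu_{+}=0$ you invoke \autoref{thm:BP}~(ii), which requires the offspring distribution to satisfy $\p{\xi\leq 1}=0$. But by construction in \eqref{VMLKM}, the upper-coupling offspring $\Qnua$ always has $\p{\Qnua=0}\geq n^{-1/2+2\beta}>0$ (this positive mass at $0$ is precisely what makes the stochastic domination in \autoref{lem:eq_T_GW} work against the heads that may hit already-active half-edges). Hence $\p{\Qnua\leq 1}>0$ for every $n$, and part (ii) never applies to the process you are actually coupling with. The correct argument, which is what the paper does, is to apply part (i) uniformly in both cases: this gives the bound $C\,\hnu_{\xi}^{t-t_{\xi}(\omega)}$ with $\hnu_{\xi}=\e[\hQnua]$. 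When $\hnu_{+}>0$, $\hnu_{\xi}=(1+o(1))\hnu_{+}$ and one absorbs the constant and $t_{\xi}(\omega)=o(t)$ into the exponent as you did. When $\hnu_{+}=0$, one still has $\hnu_{\xi}>0$ for each $n$ (so part (i) applies), but $\hnu_{\xi}=\e[\hQnua]\to\hnu_{+}=0$ by \autoref{obs:cont}; hence $\hnu_{\xi}<\zeta$ eventually for any fixed $\zeta>0$, and $\hnu_{\xi}^{(1+o(1))t}=O(\zeta^{t})$. The conclusion you reach is correct, but the cited mechanism (the doubly-exponential bound of part (ii)) is not the one that delivers it here.
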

\begin{proof}
    We only prove the upper bounds for \(A(e^{+},t)\) in \eqref{eq:tower:p}.  The lower bound follows from similar
    arguments.  

    Let \(\cT_{t,\omega}\) be the class of incomplete trees of depth \(t\) where each level has less than \(\omega\) nodes. 
    For \(T \in \cT_{t, \omega}\), we have \(t-1\leq p(T) \le \abs{T} \le (\omega-1) t =
    o(n^{\gamma/6})\). 
    Let \(X_r^{{\ua}}\) be the size of the
    \(r\)-th generation of a branching process (Galton-Watson tree) with offspring distribution
    \(Q_n^{\ua}\). 
    Let \(\hQnua\) be the conjugate of \(\Qnua\). Then by the construction of \(\Qnua\),
    we have \(\e[\hQnua] >0\) and \(\e[\hQnua] \to \hnu_{+}\). 
    It follows from \autoref{rem:distances}, \autoref{obs:cont} and
    \autoref{lem:eq_T_GW} with $\beta = \gamma/6$ that the left-hand-side of \eqref{eq:tower:p} is
    \begin{align}
        \sum_{i=t-1}^{\floor{(\omega-1) t}} \sum_{\substack{T\in \cT_{t,\omega}\\ p(T)=i}} 
        \p{
            T_{e^{+}}(i) = T
             \given
             E_{H}
        }
        & \leq   (1+o(1))\sum_{i=t-1}^{\floor{(\omega-1) t}} 
        \sum_{\substack{T\in \cT_{t,\omega}\\ p(T)=i}} \p{\GW_{Q_n^{\ua}(\beta)}\cong T}
        \\
        &
        =
        (1+o(1))
        \p{\cap_{r=1}^{{t_{}}} [0<X_r^{{\ua}}<\omega]}
        .
    \end{align}
    By \autoref{rem:seneta} and since \(t=\Omega(\log n)\), we have that \(t_{\xi}(\omega)=o(t)\)
    with \(\xi=\Qnua\).  Then by \autoref{thm:BP}, the above is at most
    \begin{equation}
        (1+o(1)) (\e{[\hQnua]})^{(1+o(1))t}
        =
        \begin{cases}
            \hnu_{+}^{(1+o(1))t} & (\hnu_{+} > 0)
            \\
        O(\zeta^{t}) & (\hnu_{+} = 0)
        \end{cases}
    \end{equation}
    for all \(\zeta > 0\).
\end{proof}

The following lemma
shows that whp no expansion happens later than \( (1+\delta) t^{\pm}\), for $\delta>0$.

\begin{lemma}\label{lem:tower:2}
    Assume that \(\nu > 1\).
    Let $\delta\in (0,1)$ and  let
    \begin{equation}
        B_{1}(e^{\pm})
        =
        B_{1}(e^{\pm};\delta)
        =
        \begin{cases}
            A(e^{\pm}, (1+\delta)t^{\pm})
            &
            (\hnu_{\pm} > 0)
            \\
            A(e^{\pm}, \delta \log(n))
            &
            (\hnu_{\pm} = 0)
            \\
        \end{cases}
    \end{equation}
    Let \(B_{1} = \cup_{e^{\pm} \in \cE^{\pm}} B_{1}(e^{\pm})\).
    Then \(\p{B_{1}}=o(1)\).
\end{lemma}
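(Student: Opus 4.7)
The plan is to apply \autoref{prop:towers2} to each half-edge and then union-bound over all of $\cE^+ \cup \cE^-$. The key observation is that the exponent $(1+\delta)t^{\pm}$ in the definition of $B_1(e^{\pm})$ is calibrated precisely to make the probability decay like $n^{-(1+\delta)}$, which beats the $O(n)$ cost of the union bound by a polynomial factor $n^{-\delta}$.

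First, I would note that by \autoref{cond:main}(ii), $m_n = n \E{D_n^+} = \Theta(n)$, so $\abs{\cE^+} + \abs{\cE^-} = 2 m_n = O(n)$. Next, taking the trivial partial pairing $H = \emptyset$ (so $\abs{\cV(H)} = 0 \le n^{1-\gamma}$ for any $\gamma \in (0,1)$), \autoref{prop:towers2} applies to any fixed half-edge $e^{\pm}$. In the case $\hnu_{\pm} > 0$, plugging in $t = (1+\delta) t^{\pm} = (1+\delta) \log n / \log(1/\hnu_{\pm}) = \Theta(\log n)$, the proposition yields
\begin{equation}
\p{B_1(e^{\pm})} \le \hnu_{\pm}^{(1+o(1))(1+\delta) t^{\pm}} = n^{-(1+\delta)(1+o(1))}.
\end{equation}
A union bound over the $O(n)$ half-edges then gives $\p{B_1} \le O(n) \cdot n^{-(1+\delta)(1+o(1))} = O(n^{-\delta + o(1)}) = o(1)$.

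In the case $\hnu_{\pm} = 0$, I would take $t = \delta \log n$ and choose $\zeta > 0$ small enough that $\zeta^\delta \le e^{-2}$, so that $\zeta^{\delta \log n} \le n^{-2}$. The second bound in \autoref{prop:towers2} then yields $\p{B_1(e^{\pm})} = O(n^{-2})$, and the union bound gives $\p{B_1} = O(n^{-1}) = o(1)$.

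I do not expect a serious obstacle: all the delicate work is packaged in \autoref{prop:towers2}, which supplies the sharp exponential decay at rate $\hnu_{\pm}$. The only thing to verify is that the factor $(1+o(1))$ in the exponent of \autoref{prop:towers2} is harmless, which follows because $t = \Theta(\log n)$ and so $\hnu_{\pm}^{o(t)} = n^{o(1)}$, leaving a net polynomial gain of $n^{-\delta + o(1)}$ after the union bound.
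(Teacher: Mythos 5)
Your proof is correct and follows essentially the same route as the paper: take $H$ to be the empty partial pairing, apply \autoref{prop:towers2} to bound $\p{B_1(e^{\pm})}$ by a polynomial of exponent strictly less than $-1$ in both the $\hnu_{\pm}>0$ case (using $\hnu_{\pm}^{t^{\pm}}=n^{-1}$) and the $\hnu_{\pm}=0$ case (choosing $\zeta$ small), and then union-bound over the $O(n)$ half-edges. The paper's write-up merely uses slightly different bookkeeping for the $(1+o(1))$ factor in the exponent, but the argument is the same.
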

\begin{proof}
By taking \(H\) to be an empty partial pairing,
it follows from Proposition~\ref{prop:towers2} that when \(\hnu_{\pm}>0\)
\begin{equation}
    \p{A(e^{\pm}, (1+\delta)t^{\pm})} \leq \hnu_{\pm}^{(1-\delta/2)\floor{(1+\delta)t^{\pm}}}
    \leq \hnu_{\pm}^{(1+\delta/4)t^{\pm}} \leq n^{-(1+\delta/4)},
\end{equation}
and when \(\hnu_{\pm}=0\)
\begin{equation}
    \p{A(e^{\pm}, \delta\log(n))} = O(\zeta^{\delta \log(n)})  \leq n^{-(1+\delta/4)},
\end{equation}
by choosing \(\zeta\) small enough with respect to \(\delta\).
The lemma follows from a union bound over all half-edges.
\end{proof}

To show the existence of late time expansions, we extend \autoref{prop:towers2} to the following:
\begin{lemma}\label{lem:tower:0}
    Assume that \(\nu > 1\).
    Let $\delta\in (0,1)$  and  \(e^{\pm}\in \cE^{\pm}\). 
    \begin{enumerate}[\normalfont(i)]
        \item 
            If \(\hnu_{\pm} \ge 0\), let
            \begin{equation}
                B_{2}(e^{\pm})
                =
                B_{2}(e^{\pm};\delta)
                =
                \left[ 
                    \abs{\frac{t_{\omega}(e^{\pm})}{t^{\pm}}-1} < \delta
                \right]
                \cap
                \left[ 
                    \abs{
                        \cN_{t_{\omega}(e^{\pm})}(e^{\pm})
                    }
                    < 
                    \omega^{2}
                \right]
                .
            \end{equation}
            Then uniformly for every partial paring \(H\) with \(\abs{\cV(H)}
            \le n^{1-\delta/{5}}\), we have
            \begin{equation}\label{TQBGG}
                n^{-1+\delta/2}\leq \p{
                    B_{2} (e^{\pm})\cond E_{H}
                }
                \leq 
                n^{-1+3\delta/2}
                .
            \end{equation}
        \item 
            If \(\hnu_{\pm} = 0\), let
            \begin{equation}
                B_{2}(e^{\pm})
                =
                B_{2}(e^{\pm};\delta)
                =
                \left[ 
                    t_{\omega}(e^{\pm}) < \delta \log(n)
                \right]
                \cap
                \left[ 
                    \abs{
                        \cN_{t_{\omega}(e^{\pm})}(e^{\pm})
                    }
                    < 
                    \omega^{2}
                \right]
                .
            \end{equation}
            Then uniformly for every partial paring \(H\) with \(\abs{\cV(H)}
            \le n^{1-\delta/{5}}\), we have
            \begin{equation}\label{BXWGG}
                \p{
                    B_{2} (e^{\pm})\cond E_{H}
                }
                =
                1-o(1)
                .
            \end{equation}
    \end{enumerate}
\end{lemma}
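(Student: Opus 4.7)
The plan is to reduce both parts to statements about branching processes via the coupling in \autoref{lem:eq_T_GW} with $\beta=\delta/30$; under this choice, both $\abs{\cV(H)}\leq n^{1-6\beta}$ and all trees encountered (of size $O(\omega^2\log n)=o(n^\beta)$) lie in the coupling's regime. Throughout I use that $\e[\hQnda],\e[\hQnua]\to\hnu_\pm$ and that $\hnu_\pm^{t^\pm}=n^{-1}$, so $o(1)$ factors in the exponent translate to a slack of $n^{o(1)}$ that fits comfortably in the gap between $n^{-1+\delta/2}$ and $n^{-1+3\delta/2}$.

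For the upper bound in (i), observe that $B_2(e^\pm)\subseteq A(e^\pm,\lfloor(1-\delta)t^\pm\rfloor)$, since requiring $t_\omega(e^\pm)\geq (1-\delta)t^\pm$ forces the neighbourhood to stay thin for that long. Applying \autoref{prop:towers2} and absorbing the $(1+o(1))$ exponent into the slack gives probability at most $\hnu_\pm^{(1+o(1))(1-\delta)t^\pm}\leq n^{-1+3\delta/2}$.

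For the lower bound in (i), I construct an explicit subevent of $B_2(e^\pm)$. Let $t_1=\lfloor(1-3\delta/4)t^\pm\rfloor$. Mimicking the lower-bound proof of \autoref{thm:BP}~(i), consider the event that in the exploration there is exactly one ``spine'' active tail with surviving descendants at each generation up to $t_1$; translating the branching process estimate $\p{E_1\cap E_2}\geq c\hnu_\xi^{t_1}$ through \autoref{lem:eq_T_GW} shows this has probability at least $\hnu_\pm^{(1+o(1))t_1}\geq n^{-1+\delta/2}$ for $n$ large. Conditional on this event, the descendants of the spine particle form a supercritical branching process (conditional on survival) with mean $\nu$, which reaches size at least $\omega$ within $O(\log\omega)=o(t^\pm)$ additional generations with probability $1-o(1)$, placing the expansion time in $[t_1,t_1+o(t^\pm)]\subseteq[(1-\delta)t^\pm,(1+\delta)t^\pm]$. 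Moreover, since $\e[Q_n]\to\nu<\infty$ yields $\omega\,\p{Q_n\geq\omega}=o(1)$, a union bound over the $<\omega$ particles in the generation before expansion rules out any single particle producing more than $\omega$ offspring at the final step, so the size at $t_\omega(e^\pm)$ is at most $\omega\cdot\omega=\omega^2$ with probability $1-o(1)$.

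For (ii), where $\hnu_\pm=0$, I bound the complement of $B_2(e^\pm)$. The probability the exploration stays thin for $\delta\log n$ generations is at most $O(\zeta^{\delta\log n})=o(1)$ for any fixed $\zeta\in(0,1)$, by the second case of \autoref{prop:towers2}. The probability of early termination (hitting $\cV(H)$) in the at most $O(\omega^2\log n)$ steps taken to reach size $\omega$ is $O(\omega^2\log(n)\cdot d_{\cV(H)}(1,0)/m_n)=o(1)$ via \autoref{lem:s}. The size-at-$t_\omega$ bound follows from the same union-bound argument as in (i). The main obstacle is the lower bound in (i): conditioning on the rare thin phase warps the distribution (introducing the spine structure), and one must verify that after this atypical phase the process reverts to typical supercritical behavior, expanding neatly into the window $[\omega,\omega^2]$ rather than dying out or overshooting through a high-degree vertex.
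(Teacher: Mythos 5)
Your upper bound in (i) and your treatment of (ii) essentially track the paper: in both cases the statement is reduced to the branching-process estimate of \autoref{prop:towers2} via the coupling of \autoref{lem:eq_T_GW}, and for (ii) the death probability is negligible because $s_\pm=1$. (The explicit bookkeeping you do on hitting $\cV(H)$ is already absorbed into the coupling lemma, so it is redundant rather than wrong.)

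The lower bound in (i) is where you genuinely diverge, and this is where the gap lies. The paper proceeds by \emph{conditional inclusion--exclusion}: starting from $A_1=A(e^+,t_1)$ (whose probability is bounded below by \autoref{prop:towers2}), they subtract off the finite-time bad events ``dies by $t_2$'' ($A_3$) and ``overshoots to size $\geq\omega^2$'' ($A_2^c\cap A_4$), bounding each conditional probability by transferring finite-time branching-process events ($X_{t_2}^\ua=0$, and a one-step Markov jump). Crucially, every event that gets transferred through \autoref{lem:eq_T_GW} is determined by an \emph{incomplete tree of bounded depth and size}, which is exactly the form the coupling lemma handles. Your proposal instead tries to directly exhibit a subevent built from the spine event $E_1=[X_{t_1}^*=1]$ of \autoref{thm:BP}. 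But $E_1$ depends on whether each particle at level $t_1$ has \emph{surviving} progeny — an infinite-time property of the branching process. It is not determined by any finite incomplete tree, so \autoref{lem:eq_T_GW} does not apply to it, and the phrase ``translating the branching process estimate $\p{E_1\cap E_2}\geq c\hnu_\xi^{t_1}$ through \autoref{lem:eq_T_GW}'' does not have a well-defined meaning: $E_1$ has no graph-exploration analogue that the lemma covers. To repair this you would have to replace survival by a finite-depth surrogate (e.g.\ ``has a descendant at level $t_1+t'$ for some $t'=o(t^\pm)$''), re-derive the $c\hnu_\xi^{t_1}$ estimate for that surrogate event, and then control the discrepancy between the two — at which point you are essentially reconstructing the paper's inclusion--exclusion, since that is exactly what it is designed to avoid. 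The heuristic ``the spine particle then reverts to typical supercritical behaviour'' is correct in spirit, but turning it into an estimate you can actually push through the coupling is the hard part, and it is not done.

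Two smaller points. Your overshoot bound via $\omega\,\p{Q_n\geq\omega}=o(1)$ is correct, but not because $\E{Q_n}\to\nu$ alone; it needs uniform integrability of $(Q_n)$, which does follow from \autoref{cond:main} (convergence in distribution plus convergence of means), so it should be stated. And the paper's version of this bound is simpler: one Markov inequality on the total $X_{t+1}$ given $X_t<\omega$, not a union bound over particles; your per-particle union bound works too, but buys nothing here.
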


\begin{proof}[Proof of (i)]
    We only bound the probability of $B_2(e^+)$; the proof for $B_2(e^-)$ is analogous.
    Let \(t_{1} = \ceil{(1-\delta) t^{+}}\) and \(t_{2} = \floor{(1+\delta)t^{+}}\).
    Let 
    \begin{equation}\label{ATZTW}
        \begin{gathered}
            A_{1} = A(e^{+}, t_{1})
            ,
            \quad
            A_{2} = A(e^{+}, t_{2})
            ,
            \quad
            A_{3} = [\cN_{t_{2}}({e^{+}}) = \emptyset]
            ,
            \\
            A_{4} = 
            \left[
                t_{\omega}(e^+)<\infty
            \right]
            \cap
            \left[
                \abs{
                    \cN_{t_{\omega}(e^+)}(e^{+})
                }
                \geq 
                \omega^{2}
            \right]
            .
        \end{gathered}
    \end{equation}
Using \autoref{prop:towers2} with $\gamma=\delta/5$, we have
\begin{align}\label{IFXTT}
        \p{B_{2}(e^{+})
            \cond E_{H}
        }
        &
        \le
        \p{A_{1}
            \cond E_{H}
        }
        \le n^{-1 +3\delta/2}
        .
    \end{align}

    We now prove the lower bound.
    If \(A_{1}\) happens, then there are three cases in which \(B_{2}(e^{+})\) does not happen: (i) the
    neighbourhood of \(e^{+}\) survives but does not expand by time \(t_{2}\); (ii) the neighbourhood dies
    by time $t_2$; (iii) the neighbourhood expands by time $t_2$, but expands by too much.  Thus
    \begin{equation}\label{eq:3:A}
    \begin{aligned}
        &
        \p{B_{2}(e^{+})
            \cond E_{H}
        }
        \\
        &
        \ge
        \p{A_{1}
            \cond E_{H}
        }
        -
        \p{A_{2}
            \cond E_{H}
        } 
        - 
        \p{A_{1} \cap A_{3}
            \cond E_{H}
        }
        -
        \p{A_{1} \cap A_2^c\cap A_{4}
            \cond E_{H}
        }
        \\
        &
        =
        \p{A_{1}
            \cond E_{H}
        }\left(1- 
        \p{A_{3}
            \cond A_1\cap E_{H}
        }
        -
        \p{A_2^c\cap A_{4}
            \cond A_1\cap E_{H}
        }
        \right)
        -
        \p{A_{2}
            \cond E_{H}
        } 
        .
    \end{aligned}
    \end{equation}
    It follows directly from \autoref{prop:towers2} with $\gamma=\delta/5$ that
    \begin{equation}\label{eq:A:1:2}
        \p{A_{1}
            \cond E_{H}
        } \geq  n^{-1+3\delta/4},
        \qquad
        \p{A_{2}
            \cond E_{H}
        } \leq  n^{-1-3\delta/4}.
    \end{equation}
        
Let \(X_t^{{\ua}}\) be the size of the \(t\)-th generation of a branching process (Galton-Watson tree) with
offspring \(Q_n^{\ua}\).
Consider the analogue of the events $A_i$ for $X_t^\ua$:
\begin{align*}
A_1^*&=\big[\cap_{r=1}^{{t_{1}}} [0<X_r^{\ua}<\omega]\big]\\
A_2^*&=\big[\cap_{r=1}^{{t_{2}}} [0<X_r^{\ua}<\omega]\big]\\
A_3^*&=\big[X^{\ua}_{t_2}=0\big]\\
A_4^*&=\big[t_\omega<\infty, X^{\ua}_{t_\omega}\geq \omega^2\big]
\end{align*} 
where $t_\omega$ is the smallest $t$ such that $X_t^\ua\geq \omega$, or $t_\omega=\infty$ if it does not exist.
One can transfer the probability of any event for branching processes to the corresponding event in the graph exploration process conditional on $E_H$ in a similar way as in the proof of \autoref{prop:towers2}. Thus, it suffices to upper bound the remaining probabilities in~\eqref{eq:3:A} for the branching process analogues.
    
Since the survival probability of $X_t^\ua$ tends to \(s_{+}\), we have
    \begin{equation}\label{eq:A:1:3}
        \begin{aligned}
         \p{
                    A_3^*
                \cond
                A_1^*
            }
            &
            \leq 
            \p{X_{t_{2}-t_{1}+1} = 0}
            \leq
            1-s_{+} +o(1)
            .
        \end{aligned}
    \end{equation}
By Markov inequality, for any $t\geq t_1$,
    \begin{equation}
        \p{X_{t+1}^\ua\geq \omega^2 \cond A_1^*\cap [X_{t}^\ua < \omega]}
        \le
        \frac{
            \E{X^\ua_{t+1} \cond A_1^*\cap [X^\ua_{t} < \omega]}
        }{
            \omega^{2}
        }
        \le
        \frac{ (1+o(1)) \nu \omega}{\omega^{2}}
        =
        O(\omega^{-1})
        .
    \end{equation}
The event $A_1^*\cap (A_2^*)^c\cap A_4^*$ implies that there exists $t\in [t_1,t_2)$ such that $X_t^\ua<\omega$ and $X_{t+1}^\ua\geq \omega^2$. Therefore,
    \begin{equation}\label{eq:A:1:4}
		\p{(A_2^*)^c\cap A_4^*\cond A_1^*}    		
		 \leq     
        \sum_{t=t_1}^{t_2-1}\p{X^\ua_{t+1}\geq \omega^2 \cond A_1^*\cap [X^\ua_{t} < \omega]}
        =
        O(\omega^{-1}\log{n})
        =
        o(1)
        .
    \end{equation}
    Then part (i) of the lemma follows by transferring the probabilities to the original events
    conditional on $E_H$ and putting \eqref{eq:A:1:2}, \eqref{eq:A:1:3} and \eqref{eq:A:1:4} into
    \eqref{eq:3:A}.
\end{proof}

\begin{proof}[Proof of (ii)]
    Again we only bound the probability of $B_2(e^+)$.
    Let \({t_{2}} = \ceil{\delta \log n}\). 
    Let
    \begin{equation}\label{PJGQY}
        A_{1} = A(e^{+}, t_2)
        ,
        \quad
        A_{2} = [\cN_{t_2}({e^{+}}) = \emptyset]
        ,
        \quad
        A_{3} = 
        \left[
            t_{\omega}(e^+)<\infty
        \right]
        \cap
        \left[
            \abs{
                \cN_{t_{\omega}(e^+)}(e^{+})
            }
            \geq 
            \omega^{2}
        \right]
        .
    \end{equation}
    When \(B_{2}(e^{+})\) does not happen, there are three (non-exclusive) cases: (i) the
    neighbourhood of \(e^{+}\) survives till \(t_2\) but does not expand; (ii) the neighbourhood
    of \(e^{+}\) dies by time \(t_2\); (iii) the neighbourhood of \(e^{+}\) expands too much. Thus
    \begin{equation}\label{MLIOC}
        \p{B_{2}(e^{+})^{c} \cond E_{H}}
        \le
        \p{A_{1} \cond E_{H}} + \p{A_{2} \cond E_{H}} + \p{A_{3} \cond A_{1}^{c} \cap A_{2}^{c} \cap E_{H}}.
    \end{equation}
    Note that it follows from \autoref{prop:towers2} that \(\p{A_{1} \cond E_{H}} = o(1)\).

    Let \(X_t^\ua\) and $t_\omega$ be as in the proof of (i), where the conjugate is defined as in~\eqref{XHNOP}.  Consider the branching process analogue of the events
    $A_i$ for $X_t^\ua$:
    \begin{equation}\label{DCUBN}
        A_1^*=\left[\cap_{r=1}^{{t_2}} [0<X_r^\ua<\omega]\right],
        \quad
        A_2^*=\left[X_{t_2}^\ua=0\right],
        \quad
        A_3^*=\big[t_\omega<\infty, X_{t_\omega}^\ua\geq \omega^2 \big],
    \end{equation} 
    By the same argument as in (i), it suffices to compute the remaining probabilities
    in~\eqref{MLIOC} for the branching process analogues.

    Note that \(\hnu_{+} = 0\) implies \(s_{+}=1\). Thus
    \begin{equation}\label{TUPAS}
        \p{A_{2}^{*}} \le \p{(X_{t}^\ua)_{t \ge 0} \text{ extinguishes}}  \to  1-s_{+} = 0.
    \end{equation}
    When \((A_{1}^{*})^{c} \cap (A_{2}^{*})^{c}\) happens, there must be the expansion before time \(\delta
    \log n\).  Thus by an argument similar to that of \eqref{eq:A:1:4}, we also have
    \begin{equation}\label{LJAKI}
        \p{A_{3}^{*} \cond (A_{1}^{*})^{c} \cap (A_{2}^{*})^{c}}
        = O(\omega^{-1} \log n) = o(1).
    \end{equation}
    Then part (ii) of the lemma follows by transferring the probabilities to the original events
    conditional on $E_H$ and putting \eqref{TUPAS}, \eqref{LJAKI} into
    \eqref{MLIOC}.
\end{proof}

Next we show that whp there exist thin out-neighbourhoods and thin in-neighbourhoods of expected
height which do not intersect.

\begin{lemma}\label{lem:tower:1}
    Assume that \(\nu > 1\).
	Let \(B_{2}(e^{\pm})\) be as in \autoref{lem:tower:0}. Define
    \begin{enumerate}[\normalfont(i)]
	\item if  \(\hnu_{+}>0\) and \(\hnu_{-} >0\), for every $e^+\in \cE^+$ and $e^-\in \cE^-$
    \begin{equation}\label{CMJFQ}
        B_{3}(e^+,e^-)
        = 
        B_{3}(e^+,e^-;\delta)
        =
        B_{2}(e^{+};\delta)
        \cap
        B_{2}(e^{-};\delta)
        \cap
        \left[ 
            \dist({e^{+},e^{-}}) 
            \geq 
            t_{\omega}(e^{+})
            +
            t_{\omega}(e^{-})     
        \right]
        ,
    \end{equation}    
    and 
    \begin{equation}\label{eq:E:1}
        B_{3}
        =
        \bigcup_{e^+\in \cE^+}\bigcup_{e^-\in \cE^-}
        B_{3}(e^{+},e^-)
        .
    \end{equation}
	\item if \(\hnu_{+}>0\) and  \(\hnu_{-}=0\), let
    \begin{equation}\label{DKLRZ}
         B_{3}
        =
        \bigcup_{e^+\in \cE^+}
        B_{2}(e^{+})
    \end{equation}    
	\item if \(\hnu_{+}=0\) and  \(\hnu_{-}>0\), let
    \begin{equation}\label{CLPNW}
        B_{3}
        =
        \bigcup_{e^-\in \cE^-}
        B_{3}(e^{-})
    \end{equation}    
	\end{enumerate}
    Then \(\p{B_{3}}=1-o(1)\).
\end{lemma}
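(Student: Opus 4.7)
The plan is to construct a linear-sized collection of tails with thin out-neighbourhoods and an analogous collection of heads with thin in-neighbourhoods via a sequential exposure argument, and then to verify that most pairs satisfy the required distance condition.

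\emph{Producing thin neighbourhoods.} For case (i), let $K=\lfloor n^{1-\delta/4}\rfloor$ and fix tails $e_1^+,\dots,e_K^+$ incident to distinct nodes. For $i=1,\dots,K$ in turn, run the out-BFS from $e_i^+$ extending the previously accumulated partial pairing, truncating the exposure once the depth reaches $\lceil(1+\delta)t^+\rceil$, the total explored size exceeds $\omega^3$, or the BFS dies. Let $H_i$ denote the cumulative partial pairing. Since each single exposure adds at most $O(\omega^3)$ vertices, $|\cV(H_i)|=O(K\omega^3)\leq n^{1-\delta/5}$ for all $i\leq K$ when $n$ is large. Lemma~\ref{lem:tower:0}(i) then yields, uniformly in the history,
\begin{equation*}
\p{B_2(e_i^+)\cond E_{H_{i-1}}}\geq n^{-1+\delta/2}.
\end{equation*}
By stochastic domination and Chernoff's inequality, whp $|S^+|\coloneqq|\{i\leq K: B_2(e_i^+)\}|\geq n^{\delta/4}/2$. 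Continuing the exposure with $K$ distinct heads gives, by the symmetric version of Lemma~\ref{lem:tower:0}(i), $|S^-|\geq n^{\delta/4}/2$ whp.

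\emph{Disjointness.} Fix any $e^+\in S^+$; the condition $\dist(e^+,e^-)\geq t_\omega(e^+)+t_\omega(e^-)$ is implied by the disjointness of the out-ball $B^+$ of $v(e^+)$ of radius $t_\omega(e^+)$ and the in-ball $B^-(e^-)$ of $v(e^-)$ of radius $t_\omega(e^-)$. Each such ball contains $O(\omega^2)$ vertices, so by \autoref{lem:s} the number of tails incident to $B^+$ is $o(\omega\sqrt n)$. When the in-neighbourhood of $e^-$ is subsequently exposed, each of the $O(\omega\log n)$ head pairings chooses a uniform unpaired tail, and the probability of landing in $B^+$ is $o(\omega/\sqrt n)$. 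Thus, for a fixed $e^-\in S^-$, the intersection probability is $o(\omega^2\log n/\sqrt n)=o(1)$. Applying Markov's inequality to the number of $e^-\in S^-$ whose in-ball meets $B^+$ shows that whp not all $e^-\in S^-$ are bad, so some pair $(e^+,e^-)$ satisfies $B_3$ and \eqref{eq:E:1} holds.

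\emph{Cases (ii) and (iii).} Here $B_3$ reduces to the existence of a single $e^\pm$ with $B_2(e^\pm)$, which is the direct output of the sequential exposure above applied to the appropriate side.

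\emph{Main obstacle.} The delicate point is to preserve the hypothesis $|\cV(H_{i-1})|\leq n^{1-\delta/5}$ throughout the sequential exposure, so that Lemma~\ref{lem:tower:0} applies at every step. Truncating each exposure at total size $\omega^3$---well above the threshold $\omega^2$ that any successful exposure attains---achieves this without disrupting the events $B_2(e_i^+)$, and a second minor point is ensuring that the size bound $o(\omega\sqrt n)$ from \autoref{lem:s} is strong enough for the intersection probability in Step~3 to be $o(1)$, which it is provided $\omega=\mathrm{polylog}(n)$.
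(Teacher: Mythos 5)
Your plan matches the paper's at a high level: both are algorithmic, sequential exposure arguments that repeatedly invoke \autoref{lem:tower:0} while controlling the size of the accumulated partial pairing so that the hypothesis $\abs{\cV(H)}\le n^{1-\delta/5}$ stays valid. The paper, however, stops the out-phase at the \emph{first} success, then runs the in-phase conditioned on the pairing that already contains the successful out-ball, so that disjointness of the two thin neighbourhoods is automatic. You instead expose all $K$ trials, count successes via Chernoff, and then argue disjointness by a separate union-bound/Markov step. This is where the real gaps are.

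First, the Chernoff step is applied to the indicators of the graph events $B_2(e_i^+)$, which are \emph{not} measurable with respect to the revealed partial pairing $H_i$: when the out-BFS from $e_i^+$ hits a fatal head of $H_{i-1}$, the exploration terminates prematurely and you cannot read off whether $B_2(e_i^+)$ holds. Concentration inequalities of Chernoff/martingale type need the increments to be adapted to the exposure filtration; the quantity $\p{B_2(e_i^+)\mid E_{H_{i-1}}}\ge n^{-1+\delta/2}$ you quote does not give a conditional probability of the event $B_2(e_i^+)$ given $B_2(e_1^+),\ldots,B_2(e_{i-1}^+)$. The correct object to concentrate is the \emph{algorithmic} success event (the BFS expands at the right depth without hitting $H_{i-1}$), which is observable, contained in $B_2(e_i^+)$, and whose conditional probability is still at least $n^{-1+\delta/2}(1-o(1))$ once one shows the BFS hits $H_{i-1}$ with $o(1)$ probability when the neighbourhood is thin. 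After this fix, a single success suffices, and the Chernoff step --- while fixable --- becomes superfluous.

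Second, the disjointness argument is both redundant and not cleanly posed. Once you switch to algorithmic successes, any successful in-trial is by construction a BFS that never hit the accumulated pairing (which contains every out-ball), so the two neighbourhoods are disjoint for free: this is exactly the observation the paper leans on. As written, your calculation fixes a ``random'' $e^-\in S^-$ and treats its in-exploration as if it were independent of the conditioning that defines $S^-$; the events $[e^-\in S^-]$ and $[B^-(e^-)\cap B^+\neq\emptyset]$ are coupled through the very exploration you are analysing, so the Markov step needs more care to be correct. The fix is simply to drop this step and invoke the built-in disjointness.

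In short: the sequential-exposure plan and the bookkeeping $\abs{\cV(H_i)}\le n^{1-\delta/5}$ are right, but you are concentrating the wrong (unobservable) events and then re-deriving a disjointness that the conditioning already hands you. Repair the first point by working with algorithmic success events; then the argument collapses to the paper's two-phase ``run until first success'' proof and the disjointness step disappears.
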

\begin{proof}
    We will only prove (i); case (ii) and (iii) are proved analogously.  Our proof is algorithmic and divided into two phases. Firstly we have the \emph{out-phase}, where we
    run the exploration process described in \autoref{sec:explore} repeatedly until the desired thin
    out-neighbourhood appears. Secondly we have the \emph{in-phase}, where we  run the exploration process in reversed direction repeatedly until we find a
    thin in-neighbourhood, disjoint from the previous one. Although the probability of success in each
    trial is small, we can show that the probability of eventual success goes to \(1\).
    Without loss of generality, we can assume that the half-edges are ordered in some
    arbitrary way.

    We provide more details. Let \(H_{0}^+\) be the empty partial pairing. At the $\ell$-th trial for
    $\ell\leq n^{1-\delta/4}$, choose \(e^{+}\) to be the smallest unpaired tail and  run the
    exploration process in \autoref{sec:explore} from \(e^{+}\), epoch by epoch, and conditioning on
    \(E_{H_{\ell-1}^+}\). Recall that the process terminates when we hit $H_{\ell-1}^+$ or when all
    tails that can be reached from $e^+$ have been paired. We add two extra termination conditions that
    are checked at the end of each epoch \(t\): (i) \(t \ge (1+\delta)t^{+}\) and (ii)
    \(\abs{\cN_{t}(e^{+})}\geq \omega\) (or equivalently, \(\abs{\cA_{i_t}^{+}}\geq \omega\) ). If
    the process terminates with condition (ii), \(t \ge (1-\delta) t^{+}\) and
    \(\abs{\cN_{t}(e^{+})}< \omega^2\), then $B_2(e^+)$ holds, we declare the $\ell$-th trial (and the out-phase) a success
    and proceed to the in-phase. Otherwise, we declare the trial a failure, obtain $H^+_\ell$ from
    $H^+_{\ell-1}$ by adding the new pairs, set $\ell$ to $\ell+1$ and restart the exploration
    process from the smallest unpaired tail. If
    \(\ell> n^{1-\delta/4}\), then we declare out-phase a failure and terminate. 

    If the out-phase succeeded, let \(H_{0}^-\) be the partial pairing obtained after the successful
    trial. We start the in-phase and run at most $n^{1-\delta/4}$ trials of the exploration process
   in reverse direction, where in the
    $\ell$-th trial we condition on $E_{H^-_{\ell-1}}$.

    If both phases succeed, then we have found $e^\pm$ with $B_{2}(e^{\pm})$, and by the
    definition of $H^-_\ell$, these neighbourhoods are disjoint.

    Let us compute the probability that the out-phase fails. Let \(F_{\ell}^{+}\) denote the event that the
    \(\ell\)-th trial in the out-phase failed, which implies the event \(B_{2}(e^{+})^c\).
    In each trial of the exploration process and regardless of whether it is a success or failure, at
    most \( O(\omega \log(n)) = O(\log^{7} n)\) half-edges are paired ({although the number of
    half-edges that have been activated might be larger}).  As there are at most  \(n^{1-\delta/4}\)
    trials, the event \(\cap_{j=1}^{\ell-1} F_{j}^{+}\) implies \(\abs{\cV(H_{\ell}^{+})} =
    O(n^{1-\delta/4} \log^{7}n) < n^{1-\delta/5}.\) It follows from \autoref{lem:tower:0} that
    regardless of \(\hnu_{+}>0\) or \(\hnu_{+}=0\)
    \begin{equation}\label{eq:low:fail}
        \begin{aligned}
            \p{F_{\ell}^{+} \cond \cap_{j=1}^{\ell-1} F_{j}^{+}}
            \le 
            1-n^{-1+\delta/2}
            .
        \end{aligned}
    \end{equation}
    (In the case \(\hnu_{+} = 0\), this is actually \(o(1)\).)
    Therefore
    \begin{equation}\label{eq:low:fail:all}
        \p{\cap_{\ell=1}^{\floor{n^{1-\delta/4}}} F_{i}^+}
        =
        \prod_{\ell=1}^{\floor{n^{1-\delta/4}}}
        \p{F_{\ell}^{+} \cond \cap_{j=1}^{\ell-1} F_{j}^{+}}
        \le
        \left( 1- n^{-1+  \delta/2} \right)^{n^{1-\delta/4}-1}
        \to 0
        .
    \end{equation}
Thus, the probability that the out-phase fails is \(o(1)\).  By a similar argument, the probability that
    the in-phase fails is also $o(1)$, concluding the proof.
\end{proof}

\subsection{Subcritical case}
\label{sec:sub}

When \(\nu \in (0,1)\), i.e., in the subcritical case, we have \(s_{\pm} = 0\) and \(\hnu_{\pm} =
\nu\). Thus \(t^{+}=t^{-} = \log_{1/\nu} n\).  By considering analogous events in branching process,
we can show that whp the neighbourhoods of all half-edges die before time \( (1+\delta)
t_{\pm}\) and that there exist half-edges \(e^{\pm}\) whose neighbourhood is of height \( (1-\delta) t^{\pm}\).

\begin{lemma}\label{EIYMO}
    Assume that \(0<\nu < 1\). Define the event \( A'(e^{\pm}, t) \coloneqq \left[ \abs{\cN_{t}(e^{\pm})} > 0
    \right].  \) 
    \begin{enumerate}[\normalfont(i)]
        \item Let \( B_{4} = B_{4}(\delta) = \cup_{e^{\pm} \in \cE^{\pm}} A'(e^{\pm},
            (1+\delta)t_{\pm}).  \) Then \(\p{B_{4}} = o(1)\).
        \item Let \( B_{5} = B_{5}(\delta) =\cup_{e^{\pm} \in \cE^{\pm}} A'(e^{\pm},
            (1-\delta)t_{\pm}).  \) Then \(\p{B_{5}} =1- o(1)\).
    \end{enumerate}
\end{lemma}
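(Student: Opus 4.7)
The approach is the subcritical analogue of \autoref{lem:tower:2} and \autoref{lem:tower:1}, with significant simplifications since $s_\pm=0$ and $\hnu_\pm=\nu$, so the coupled branching processes (with offsprings $\Qnua$ or $\Qnda$) are themselves subcritical with mean $(1+o(1))\nu<1$.

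For part \textnormal{(i)}, I would bound $\p{A'(e^+, (1+\delta) t^+)}$ via a first-moment calculation at the level of the graph, no branching-process coupling required. Since $|\cN_t(e^+)|$ is at most the number of directed tail-walks of length $t$ starting at $e^+$, and the expected count of such walks is
\begin{equation*}
\frac{1+o(1)}{m_n^t}\sum_{v_1,\ldots,v_t\in[n]}\prod_{i=1}^{t}d^-_{v_i}d^+_{v_i} \;=\; \frac{1+o(1)}{m_n^t}\Bigl(\sum_v d^-_v d^+_v\Bigr)^t \;=\; \bigl((1+o(1))\nu\bigr)^t,
\end{equation*}
using $\sum_v d^-_v d^+_v=(1+o(1))m_n\nu$ from \autoref{cond:main}(iii). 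For $t=(1+\delta)\log_{1/\nu}(n)$ this is $n^{-(1+\delta)(1+o(1))}\le n^{-1-\delta/2}$. Markov and a union bound over the $2m_n=O(n)$ half-edges (same argument for $e^-$) give $\p{B_4}=o(1)$.

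For part \textnormal{(ii)}, note that it suffices to exhibit whp a tail $e^+$ with $A'(e^+,(1-\delta)t^+)$; the case of $e^-$ is handled identically by exploring in reverse. I would adapt the iterative trial procedure of \autoref{lem:tower:1}. Order the tails arbitrarily and set $H_0=\emptyset$; at trial $\ell\le n^{1-\delta/4}$, let $e^+_\ell$ be the smallest unpaired tail and explore from it via BFS conditional on $E_{H_{\ell-1}}$, aborting as a \emph{failure} either when the trial's total pairings exceed $t^2$ (with $t:=(1-\delta)\log_{1/\nu}(n)$) or when the neighbourhood extinguishes before depth $t$, and as a \emph{success} if depth $t$ is reached alive. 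Append the trial's pairings to obtain $H_\ell$. Each trial consumes at most $t^2=O(\log^2 n)$ pairings, so $|\cV(H_\ell)|=O(n^{1-\delta/4}\log^2 n)=o(n^{1-\delta/5})$; then \autoref{lem:eq_T_GW} with $\beta=\delta/30$ couples the success event to $[Y_t^\da\le t^2]\cap[X_t^\da>0]$, where $(X_r^\da)_{r\ge 0}$ is a branching process with offspring $\Qnda$ and $Y_t^\da=\sum_{r\le t}X_r^\da$. \autoref{thm:BP:sub}(ii) applied with $\omega(t)=t^2$ (so that $\omega(t)/t\to\infty$) yields
\begin{equation*}
\p{[Y_t^\da\le t^2]\cap[X_t^\da>0]}=\nu^{(1+o(1))t}\ge n^{-1+\delta/2}
\end{equation*}
for $n$ large. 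Hence the probability that all $n^{1-\delta/4}$ trials fail is at most $(1-n^{-1+\delta/2})^{n^{1-\delta/4}}\le\exp(-n^{\delta/4})=o(1)$.

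The main obstacle is keeping each trial within the regime $p(T)\le n^\beta$ required by \autoref{lem:eq_T_GW} while not sacrificing the quantitative survival probability needed for the trial scheme to succeed. This is exactly what the ``bounded total progeny'' variant of the survival probability in \autoref{thm:BP:sub}(ii) is designed for: imposing the side condition $Y_t\le \omega(t)$ for any $\omega(t)/t\to\infty$ leaves the exponential rate of survival unchanged at $\nu$, which is precisely what the iterative scheme requires in order to accommodate $n^{1-\delta/4}\gg n^{1-\delta/2}$ trials with each trial contained in $O(\log^2 n)$ pairings.
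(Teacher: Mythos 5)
Your proof is correct, and it is worth noting that part (i) takes a genuinely different and arguably cleaner route than the paper. The paper's proof of (i) is a sketch that invokes the branching-process comparison of \autoref{lem:eq_T_GW} together with \autoref{thm:BP:sub}; but \autoref{lem:eq_T_GW} only controls the exploration process on trees with $p(T)\le n^\beta$, so applying it directly to the event $[\abs{\cN_t(e^\pm)}>0]$ (which does not cap the tree size) requires an extra truncation step that the paper does not spell out. Your first-moment argument---bounding $\e\abs{\cN_t(e^+)}$ by the expected number of length-$t$ tail-walks, which is at most $(1+o(1))\nu_n^t$ with $\nu_n\coloneqq\sum_v d^-_vd^+_v/m_n\to\nu$---sidesteps this entirely and gives the $n^{-1-\delta/2}$ bound per half-edge with no coupling at all; one should write $\leq$ rather than $=$ in the walk-count display (the product over all vertex sequences over-counts walks that reuse half-edges), but since Markov only needs an upper bound this is cosmetic. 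For part (ii), your iterative trial scheme mirrors the paper's proof (which adapts \autoref{lem:tower:1}), differing only in the cosmetic choice of abort threshold $t^2$ instead of $\omega=\log^6 n$ -- both satisfy the $\omega(t)/t\to\infty$ hypothesis of \autoref{thm:BP:sub}(ii) -- and in spelling out the choice of $\beta$ and the verification that $\abs{\cV(H_\ell)}$ stays within the range where \autoref{lem:eq_T_GW} applies; your closing observation about why the ``bounded total progeny'' variant in \autoref{thm:BP:sub}(ii) is exactly what makes the trial scheme work is precisely the point.
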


\begin{proof}
    The proof of (i) is analogous to that of \autoref{lem:tower:2}. Let \( (X_{t}^{\ua})_{t \ge 0}\)
    be a branching process with offspring distribution \(\Qnua\). Then it follows from
    \autoref{thm:BP:sub} and \autoref{lem:eq_T_GW} that
    \begin{equation}\label{COMCE}
        \p{A'(e^{\pm}, (1+\delta)t^{\pm})}
        \le
        (1+o(1))
        \p{X^{\ua}_{ (1+\delta) t^{\pm}} > 0} 
        \le 
        n^{-1-\delta/2}
        .
    \end{equation}
    Thus \(\p{B_{4}} = o(1)\) follows from a union bound over all half-edges.

    The proof of (ii) is analogous to that of \autoref{lem:tower:1}. We start with the smallest tail
    and explore its neighbourhood. If its neighbourhood either dies or reaches total size \(\omega\) before
    time \((1-\delta)t^{+}\), then we call it a failure and restart the exploration from the smallest
    unpaired tail.  Otherwise we call it a success and terminate.  The probability of success in one trial is
    at least \(n^{-1+\delta/2}\) by the same argument as in (i). Thus by repeating the process
    \(n^{1-\delta/4}\) times, whp we eventually succeed.
\end{proof}

\section{Distance between two sets of edges in the supercritical regime}\label{sec:typ:dist}

In this section, we show that whp, the distance between two modestly large sets of edges is about \(\log_{\nu}(n)\), given that $\nu>1$.

Let \(H\) be a partial pairing of half-edges as defined in \autoref{sec:explore}. Recall that \(\cP^{\pm}(H)\) is the set of paired heads and tails in \(H\),
\(\cV(H)\) is the set of nodes incident to half-edges in \(\cP^{\pm}(H)\), and \(E_{H}\) is the event that \(H\) is a subset of the pairing \(\vecGn\).

We consider a triplet \( (H, \cX^+,\cX^-) \) satisfying the following condition:
\begin{condition}\label{cond:H}
    $H$ is a partial pairing of $\cE^\pm$ and \(\cX^{\pm}\subseteq \cE^{\pm}(H)\setminus
    \cP^{\pm}(H)\).
\end{condition}

 For \(\cI \subseteq  [n]\), let \(\dist(\cX^+,\cX^-,\cI)\) be the minimal
    length of paths from \(\cX^+\) to \(\cX^-\) using only nodes in \(\cI\). 
The main result in this section is the following:
\begin{prop}\label{prop:connect}
   Uniformly over all choices of \(\varepsilon,\gamma >
    0\) and \( (H, \cX^+, \cX^-)\) satisfying \autoref{cond:H}, \(|\cV(H)| \le n^{1 -\gamma}\) and \(|\cX^+|,|\cX^-|\geq \omega\) we have
    \begin{equation}\label{eq:conn:1}
        \p{ \dist(\cX^+, \cX^-)   > (1+\varepsilon)\log_{\nu} n \cond E_H}
    =
    o(n^{-100})
    ,
    \end{equation}
    and, assuming in addition that \(|\cX^+|,|\cX^-|\le \omega^{2}\)
    \begin{equation}\label{eq:conn}
        \p{ \dist(\cX^+, \cX^-, [n]\setminus \cV(H))   < (1-\varepsilon)\log_{\nu} n \cond E_H}
    =
    o(n^{-\varepsilon/2})
    .
    \end{equation}
\end{prop}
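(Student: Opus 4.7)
The two inequalities call for opposite techniques. For the lower bound~\eqref{eq:conn}, my plan is a first moment calculation. Let $W_t$ count directed walks of length $t$ from $\cX^+$ to $\cX^-$ using only vertices in $[n]\setminus\cV(H)$. Conditioning step by step on $E_H$ and applying \autoref{cor:s} (to guarantee $d_\cI(1,1)/m_n=(1+o(1))\nu$ whenever the walk has revealed only $o(n)$ nodes), a routine computation gives $\E{W_t\mid E_H}=(1+o(1))|\cX^+||\cX^-|\nu^{t-1}/m_n$. Summing over $t\le(1-\varepsilon)\log_\nu n$ and using the hypothesis $|\cX^\pm|\le\omega^2$ yields total expected count $O(\omega^4 n^{-\varepsilon}\log n)=o(n^{-\varepsilon/2})$, and Markov's inequality closes the argument.

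For the upper bound~\eqref{eq:conn:1}, set $t_\star=\lceil(1+\varepsilon/3)(\log_\nu n)/2\rceil$, so that $\nu^{t_\star}\ge n^{1/2+\varepsilon/8}$. My plan is a two-phase exploration followed by a sprinkling step. First, run a forward BFS from $\cX^+$ for $t_\star$ levels (as in \autoref{sec:explore}, starting simultaneously from all half-edges of $\cX^+$), producing an active tail set $\cA^+$; then, conditional on all pairs revealed so far, run a backward BFS from $\cX^-$ for $t_\star$ levels, producing an active head set $\cA^-$. Once $|\cA^\pm|\ge n^{1/2+\varepsilon/16}$ are secured, the probability that no tail of $\cA^+$ is paired to a head of $\cA^-$ is at most
\begin{equation}
\left(1-\frac{|\cA^-|}{m_n}\right)^{|\cA^+|}\le\exp\!\left(-\frac{n^{\varepsilon/8}}{2\lambda}\right) = o(n^{-100}),
\end{equation}
and the existence of such a pair closes a directed path of length at most $2t_\star+1\le(1+\varepsilon)\log_\nu n$ from $\cX^+$ to $\cX^-$.

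The main obstacle will be showing $|\cA^+|\ge n^{1/2+\varepsilon/16}$ with probability $1-o(n^{-100})$; the backward phase is symmetric, since the $O(n^{1/2+\varepsilon/4})$ vertices exposed during the forward phase still leave the overall partial pairing incident to at most $n^{1-\gamma'}$ vertices for some $\gamma'>0$. I split the forward BFS into two sub-phases. In the initial sub-phase, run up to an intermediate level $t_0=\Theta(\log\log n)$ so that the revealed set has size $o(n^{1/10})$; \autoref{lem:eq_T_GW} then couples the BFS with $|\cX^+|\ge\omega$ independent Galton--Watson trees of offspring $\Qnda(\beta)$. Equation~\eqref{eq:cond_non-deg} gives each root a constant probability of reaching size $\Omega(\nu^{t_0})$, and a Chernoff bound over the $\omega$ independent starts pushes the failure probability to $e^{-c\omega}=o(n^{-100})$. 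In the second sub-phase, from $t_0$ to $t_\star$, \autoref{lem:eq_T_GW} no longer applies; instead, at each new level $r$ the number of newly active half-edges is a sum of weakly-dependent increments with mean $(1+o(1))\nu|\cA^+_{r-1}|$ (by \autoref{cor:s}) and finite second moment (by \autoref{cond:main}(iii)), so a Bernstein-type inequality gives a level-by-level deviation probability $\exp(-c|\cA^+_{r-1}|^{1/2})$. Since $|\cA^+_r|$ grows geometrically and the remaining levels number $O(\log n)$, a union bound keeps the total failure probability $o(n^{-100})$.
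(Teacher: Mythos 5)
Your lower-bound strategy is sound and genuinely close to the paper's: you count walks rather than simple paths and rely on Markov's inequality, whereas the paper (\autoref{lem:Pk:mean}) counts simple paths and uses Maclaurin's inequality to control the degree sum over distinct vertices. Both give the same order of magnitude; your over-count by walks is harmless for the upper bound on the expectation and is arguably a touch simpler.

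For the upper bound~\eqref{eq:conn:1}, the final sprinkling step is correct and identical in spirit to the paper's, but the expansion argument that precedes it has a genuine gap. In your second sub-phase you claim a Bernstein-type level-by-level deviation bound of order $\exp(-c|\cA_{r-1}^+|^{1/2})$, justified by the increments having ``mean $(1+o(1))\nu$ and finite second moment (by \autoref{cond:main}(iii)).'' Neither claim survives scrutiny. First, the increment at a step is the out-degree of the vertex reached by a uniformly chosen unpaired head, which is (approximately) distributed as $D_{\tin}^{+}$; its second moment is $\frac{1}{\lambda}\,\mathbb{E}[D^{-}(D^{+})^{2}]$, a \emph{mixed third} moment of $D$ that \autoref{cond:main} does not control (e.g.\ take $\lambda_{k,k}\sim k^{-4}$: all the second moments in \autoref{cond:main} are finite while $\mathbb{E}[D^{-}(D^{+})^{2}]=\infty$). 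Second, even granting a finite variance, the individual increments are only bounded by $\Delta_n$, which by \autoref{cor:D} can be as large as $n^{1/2-o(1)}$; a Bernstein bound for a sum of $|\cA_{r-1}^+|$ such terms with $|\cA_{r-1}^+|$ only polylogarithmic gives an exponent of order $|\cA_{r-1}^+|/\Delta_n\to 0$, nowhere near the $o(n^{-100})$ you need. This is precisely why the paper's \autoref{lem:expand} explores only through vertices of in- and out-degree below a fixed cutoff $K$ chosen via~\eqref{eq:cond:K}: with bounded increments Hoeffding's inequality yields $\exp(-c\,d_{k-1}/K^{2})$, which is already superpolynomially small once $d_{k-1}\ge\omega=\log^6 n$. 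The truncation also absorbs the loss of mean (at most a factor $1-\rho/4$), which your argument does not track. Once you truncate, your two-phase structure (a Galton--Watson coupling up to level $\Theta(\log\log n)$ followed by concentration) becomes unnecessary: the paper runs a single Hoeffding-based phase directly from the initial set of size $\ge\omega$.
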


\subsection{Lower bound by path counting}

We prove the lower bound \eqref{eq:conn} in \autoref{prop:connect} by a technique called \emph{path
counting} which was introduced by van der Hofstad in \cite{vanderhofstad2020}.

Given \(\cI \subseteq [n]\setminus \cV(H)\), 
a \emph{simple path} of length \(k\) from \(\cX^+\) to
\(\cX^-\) using nodes in \(\cI\) is a sequence
\begin{equation}
    \Pi =
    \left\{
        e^+,
        \left(v_1,e_1^-,e_1^+\right),
        \dots,
        \left(v_{k-1},e_{k-1}^-,e_{k-1}^+\right),
        e^-
    \right\}
    ,
\end{equation}
where \(e^+ \in \cX^+\), \(e^- \in \cX^-\), \(v_{i} \in\cI\) are distinct nodes, and \(e_{i}^-\in
\cE^-(v_i)\) and  \(e_{i}^+\in \cE^+(v_i)\).  Let \(P_{k}(\cX^+, \cX^-, \cI)\) be the number of
simple directed paths of length \(k\) from some \(\cX^+\) to some \(\cX^-\) only using nodes in
\(\cI\).    The following proposition is an adaptation of \cite[Proposition~7.4]{vanderhofstad2020}
for the directed configuration model  and provides an upper bound for the expected number of paths
of certain length from \(\cX^+\) to \(\cX^-\) conditioning on \(E_{H}\) using nodes outside
\(\cV(H)\):
\begin{lemma}
    \label{lem:Pk:mean}
    Let \(\cI \subseteq [n]\setminus\cV(H)\).
    Let \(r\) be the number of nodes \(i\in \cI\) with \(d_{i}^{+}d_{i}^{-}\geq 1\).   
    Let \(s=\abs{\cP^{-}(H)}\), i.e., the number of paired heads in \(H\).
    For any \(H\) any \(k \in [|\cI|{+1}]\), we have
    \begin{equation}\label{eq:Pk:up}
        n_{k,H}(\cX^+,\cX^-,\cI)
        \coloneqq
        \E{P_k(\cX^+,\cX^-,\cI) \cond E_H}
        \leq 
        \frac{\nu _{\mathcal{I}}^{k-1}  |\cX^+||\cX^-|}{m_n-k-s+1}
        \prod _{i=0}^{k-2} \frac{1-\frac{i}{r}}{1-\frac{i+s}{m_n}}
        ,
    \end{equation}
    where $\nu _{\mathcal{I}}$ is defined as in~\eqref{eq:nu:I}.
    (We use the convention that an empty product equals \(1\).)
\end{lemma}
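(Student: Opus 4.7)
The plan is to follow the path-counting strategy for the undirected configuration model from~\cite[Proposition~7.4]{vanderhofstad2020}, adapted to the directed setting. First I would observe that a simple directed path $\Pi$ of length $k$ from $\cX^+$ to $\cX^-$ via $\cI$ is uniquely parameterised by a tail $e^+\in\cX^+$, an ordered tuple of distinct intermediate nodes $(v_1,\ldots,v_{k-1})\in\cI^{k-1}$, a head $e_i^-\in\cE^-(v_i)$ and a tail $e_i^+\in\cE^+(v_i)$ at each $v_i$, and a head $e^-\in\cX^-$. Since $\cI\cap\cV(H)=\emptyset$ but $v(e^+),v(e^-)\in\cV(H)$, the $k+1$ vertices visited by $\Pi$ are automatically distinct, so $\Pi$ is indeed simple.

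Next, conditional on $E_H$, the pairing on the $m_n-s$ unpaired half-edges is uniform, and hence the probability that any specified additional set of $k$ pairings all occur equals $(m_n-s-k)!/(m_n-s)!$. Summing the corresponding indicator over all parameterisations and applying linearity of expectation gives
\begin{equation}
n_{k,H}(\cX^+,\cX^-,\cI)=\frac{|\cX^+||\cX^-|}{(m_n-s)(m_n-s-1)\cdots(m_n-s-k+1)}\sum_{\substack{(v_1,\ldots,v_{k-1})\\\text{ordered distinct in }\cI}}\prod_{i=1}^{k-1}d_{v_i}^-d_{v_i}^+.
\end{equation}

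The main step is to bound the symmetric sum. Writing $w_u\coloneqq d_u^-d_u^+$, exactly $r$ nodes of $\cI$ satisfy $w_u\ge 1$, and $\sum_{u\in\cI}w_u=m_n\nu_\cI$. The ordered sum equals $(k-1)!\,e_{k-1}(w)$, and I would apply Maclaurin's inequality on elementary symmetric polynomials to conclude
\begin{equation}
(k-1)!\,e_{k-1}(w)\le (k-1)!\binom{r}{k-1}\left(\frac{m_n\nu_\cI}{r}\right)^{k-1}=(m_n\nu_\cI)^{k-1}\prod_{i=0}^{k-2}\left(1-\frac{i}{r}\right).
\end{equation}
Finally, substituting this bound, rewriting $m_n-s-j=m_n(1-(s+j)/m_n)$ for $j=0,\ldots,k-1$, and pulling out the $j=k-1$ factor to form the $(m_n-k-s+1)^{-1}$ prefactor recovers the displayed inequality. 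The main obstacle is the Maclaurin step; everything else is routine bookkeeping.
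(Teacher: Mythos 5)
Your proposal is correct and follows essentially the same path-counting-plus-Maclaurin argument as the paper; the only cosmetic differences are that the paper first reduces to singleton $\cX^+=\{e^+\}$, $\cX^-=\{e^-\}$ before summing, and states Maclaurin's inequality directly for the ordered distinct-tuple sum rather than in terms of $e_{k-1}$, while you bound the full sum at once and phrase the inequality via elementary symmetric polynomials. (One minor inaccuracy: $v(e^+)$ and $v(e^-)$ need not be distinct, but this is irrelevant since the paper's definition of a simple path only requires the intermediate nodes $v_1,\dots,v_{k-1}$ to be distinct, which is guaranteed by $\cI\cap\cV(H)=\emptyset$.)
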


\begin{proof}
    It suffices to consider only the case that \(\cX^+=\left\{ e^+ \right\}\), \(\cX^-=\left\{ e^-
        \right\}\) and show that
    \begin{equation}\label{eq:Pk:up:1}
        n_{k,H}(e^+,e^-,\cI)
        \le
        \frac{\nu _{\mathcal{I}}^{k-1}}{m_n-k-s+1}
        \prod _{i=0}^{k-2} \frac{1-\frac{i}{r}}{1-\frac{i+s}{m_n}}
        .
    \end{equation}
    Then \eqref{eq:Pk:up} follows by adding up the previous bound for all \(e^+\in \cX^+\) and \(e^-\in \cX^-\).

   Conditioning on \(E_H\), the exact probability for a given \(\Pi\) to exist is
    \begin{equation}
        \prod _{i=1}^k \frac{1}{m_n-i-s+1}
        .
    \end{equation}
    If we fix \(v_{1},\dots,v_{k-1}\), then the number of simple path using them in the given order is
    exactly
    \(
        \prod _{i=1}^{k-1} d^-(v_i) d^+(v_i)
        .
    \)
    Let \(\cI^k_*\) be the set of sequences of distinct nodes in \(\cI\) of length \(k\).
    Thus
    \begin{equation}\label{eq:Pk:up:2}
        n_{k,H}(e^+,e^-,\cI)
        =
        \frac{1}{m_n- k-s+1}
        \sum_{(v_{1},\dots,v_{k-1})\in\cI^{k-1}_*}
        \prod _{i=1}^{k-1} \frac{d^-(v_i) d^+(v_i)}{m_n-i-s+1}
        .
    \end{equation}
   
    Let \(\cR = \{ i\in \cI:\, d_{i}^{+}d_{i}^{-}\ge 1\}\). Note that \(\nu_\cR=\nu_\cI\). Define \(\cR^k_*\) as before. We use the following inequality of Maclaurin (\cite[Theorem 52]{hardy1988}), for \(r = |\cR|\),
    \(1 \le k-1 \le r\) and \((a_{i})_{i \in \cR}\) with \(a_{i} \ge 0\), we have
    \begin{equation}
        \frac{(r-k+1)!}{r!}
        \sum_{(\pi_{1},\dots,\pi_{k-1})\in\cR^{k-1}_*}
        \prod_{i=1}^{k-1}a_{\pi_i}
        \le
        \left( 
            \frac{1}{r}
            \sum_{i \in \cR} a_{i}
        \right)^{k-1}
        .
    \end{equation}
    Applying the above inequality with \(a_{\pi_i}=d^-(v_i) d^+(v_i)\), we have
    \begin{align*}
        n_{k,H}(e^+,e^-,\cI)
        &
        \leq
        \frac{1
        }{m_n-k-s+1  
        }
        \frac{1}{
            \prod _{i=1}^{k-1} m_n-i-s+1}
        \frac{r! }{(r-k+1)!}\left(\frac{\sum _{i\in \mathcal{R}} d^-(v_i) d^+(v_i)}{r}\right)^{k-1}
        \\
        &
        \leq \frac{\nu_{\mathcal{I}}^{k-1}}{m_n-k-s+1 }
        \prod _{i=0}^{k-2} \frac{1-\frac{i}{r}}{1-\frac{i+s}{m_n}}
        .
        \qedhere
    \end{align*}
\end{proof}

\begin{proof}[Proof of lower bound in \autoref{prop:connect}]
    Let \(k = \ceil{(1-\varepsilon)\log_{\nu} n}\) and $\cI=[n]\setminus \cV(H)$. Since
    $|\cV(H)|\leq n^{1-\gamma}$, by~\autoref{lem:s} we have $s=|\cP^-(H)|=o(n^{1-\gamma/2})$.
    Since $\abs{\cI}=n-o(n)$, it follows from \autoref{cor:s} that  \(\nu_{\cI} = (1+o(1)) \nu\).
    Let \(r\) be as in \autoref{lem:Pk:mean}. Then it follows from~\autoref{lem:Pk:mean} that
    \begin{equation}
        \begin{aligned}
            \p{\dist(\cX^+,\cX^-,\cI) \le k \cond E_H}
            &
            \le
            \sum_{\ell = 1}^{k} \E{P_{\ell}(\cX^+,\cX^-,\cI)\cond E_H}
            \\
            &
            \le
            \sum _{\ell=1}^{k}
            \frac{
                |\cX^+||\cX^-| 
                \nu _{\mathcal{I}}^{\ell-1} 
                \prod _{i=0}^{\ell-2} \frac{1-\frac{i}{r}}{1-\frac{i+s}{m_n}}
            }{
                m_n-\ell-s+1
            }
            \\
            &
            \le
            O(1)
            \frac{
                 |\cX^+||\cX^-| 
            }{
                m_n
            }
            \sum _{\ell=1}^{k}
            \nu _{\mathcal{I}}^{\ell-1} 
            \\
            &
            =
            O
            \left( 
                n^{-1} \omega^{4} \nu_{\cI}^{(1-\varepsilon)\log_{\nu} n}
            \right)
            =
            o(n^{-\varepsilon/2})
			.
        \end{aligned}
    \end{equation}
\end{proof}

\subsection{Upper bound by bounded expansion}

We prove the upper bound \eqref{eq:conn:1} in \autoref{prop:connect} by showing that a large set of
half-edges typically expands with rate at least \(\nu\), even after conditioning on $E_H$ for a small
partial pairing $H$. To this end, it suffices to consider
only nodes with bounded degree.

Given $\rho > 0$ that will be fixed later, choose \(K\) large enough so that
    \begin{equation}\label{eq:cond:K}
	\left( 
        1-\frac{\rho}{4}
    \right)
        \nu	 	
	\le    
    \E{\dinp \mathbb{1}\big({[\dinp<K] \cap [\dinm<K]}\big)}
    \le \nu
    .
\end{equation}
Note that such a \(K\) exists as \(\E{D_{\tin}^+}=\nu<\infty\) and \(D_{\tin}^+\) takes non-negative
values.  Let $\cL^\pm$ be the set of heads and tails incident to vertices with at least $K$ heads
or at least $K$ tails.

Given \( (H,\cX^+,\cX^-) \) satisfying \autoref{cond:H}, we want to explore the edge out-neighbourhoods of \(\cX^+\) and the edge in-neighbourhoods of \(\cX^-\) using only nodes with small in- and out-degree, and conditioning on
\(E_{H}\). Let $\cN^{*}_0(\cX^+)=\cX^+$ and for $k\geq 1$ define recursively
\begin{equation}\label{LLVTC}
\begin{aligned}
        \cN^{*}_k(\cX^+) = \{
            &
            e^+\in  \cE^+\setminus (\cL^+\cup \cN^{*}_{< k}(\cX^+)\cup \cF^+(H)) 
            \\
            &
            \mid  \exists f^+\in
        \cN^{*}_{k-1}(\cX^+), f^-\in \cE^-(v(e^{+})) , f^+f^- \text{ is a pair}\}
\end{aligned}
\end{equation}
In words, $\cN^{*}_k(\cX^+)$ is the set of tails that can be reached from $\cX^+$ by a path of
length $k$ using only vertices of low in- and out-degree. 
Note that the distance in $\vecGn$ from
$\cX^+$ to the elements of $\cN^{*}_k(\cX^+)$ is at most $k$. We define
$\cN^{*}_k(\cX^-)$ in a similar way.

\begin{lemma}\label{lem:expand}
    Uniformly over all choices of \(\gamma >
    0\), \( (H, \cX^+, \cX^-)\) satisfying \autoref{cond:H}, \(|\cV(H)| \le n^{1 -\gamma}\) and
    \(|\cX^+|,|\cX^-|\geq \omega\) we have:
    \begin{enumerate}[\normalfont(i)]
    \item
    for all 
    \(k \le \log _{(1+\rho)\nu }\left(n^{1-\gamma}/|\cX^+|\right)\)
    \begin{equation}\label{eq:A:k0}
        \p{  
            ((1-\rho)\nu)^{k}|\cX^+| \leq |\cN^{*}_k(\cX^+)| \leq  ((1{+}{\rho})\nu)^{k}|\cX^+|
            \cond E_H}=
        1-o(n^{-100})
        ;
    \end{equation}
    \item
     for all 
    \(k \le \log _{(1+\rho)\nu }\left(n^{1-\gamma}/|\cX^-|\right)\) 
    \begin{equation}\label{eq:B:k0}
        \p{  ((1-\rho)\nu)^{k} |\cX^-|\leq |\cN^{*}_k(\cX^-)|\leq  ((1{+}{\rho})\nu)^{k}
            |\cX^-|\cond E_H}=
        1-o(n^{-100})
        .
    \end{equation}  
    \end{enumerate}
\end{lemma}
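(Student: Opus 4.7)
The plan is to prove (i); the proof of (ii) follows by reversing edge directions and running the analogous exploration in-neighbourhood first. The approach is to analyse the exploration level by level using a martingale concentration (bounded differences / Azuma--Hoeffding) argument. Throughout, write $Y_j \coloneqq |\cN^{*}_j(\cX^+)|$ and let $\cH_j$ be the $\sigma$-algebra generated by all pairings revealed up to and including level $j$, together with the conditioning on $E_H$. I would proceed by induction on $k$, maintaining the event that $((1-\rho)\nu)^j|\cX^+| \le Y_j \le ((1+\rho)\nu)^j|\cX^+|$ for all $j\le k-1$.

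Under this inductive hypothesis and the upper bound on $k$, the cumulative number of visited vertices is $O(\sum_{j<k}Y_j)=O(Y_{k-1})=O(n^{1-\gamma})$. Combined with $|\cV(H)|\le n^{1-\gamma}$, \autoref{lem:s} implies that the number of heads incident to visited vertices or to $\cV(H)$ is $o(n)$; and \autoref{cor:s} gives $\nu_{\cI}=(1+o(1))\nu$ on the unvisited part $\cI$. Pair the $Y_{k-1}$ tails of $\cN^{*}_{k-1}$ sequentially with uniformly chosen unpaired heads. For the $i$-th such tail let $Z_i \in\{0,1,\dots,K-1\}$ be the number of tails it contributes to $\cN^{*}_k$, i.e.\ the out-degree of the vertex $v$ it reaches when $v$ is low-degree, unvisited and outside $\cV(H)$, and $0$ otherwise. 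By the size-biased nature of head-selection, $|\cV(H)|\le n^{1-\gamma}$, \eqref{eq:cond:K} and the inductive control on $Y_{k-1}$, the conditional mean satisfies
\[
\E{Z_i \cond \mathcal F_{i-1}} \;=\; (1+o(1))\,\E{D_{\tin}^+\mathbb 1[D_{\tin}^+<K,\, D_{\tin}^-<K]} \;\in\; [(1-\rho/3)\nu,\,\nu],
\]
where $\mathcal F_{i-1}$ refines $\cH_{k-1}$ by the first $i-1$ pairings at level $k$. The $o(1)$ term absorbs the depletion of heads, the ``collision'' events (two tails in $\cN^{*}_{k-1}$ pairing to the same vertex, or to a vertex already visited at an earlier level), which by a birthday-type calculation contribute $O(Y_{k-1}^2/n)=o(Y_{k-1})$ on average, and the discrepancy between $Q_n$ and $D_{\tin}^+$ supplied by \autoref{cond:main}.

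Since each $Z_i$ is bounded by $K$, Azuma--Hoeffding applied to the martingale $Y_k-\sum_{i\le Y_{k-1}}\E{Z_i\cond \mathcal F_{i-1}}$ gives
\[
\p{\,|Y_k - \E{Y_k \cond \cH_{k-1}}| \ge (\rho/3)\nu Y_{k-1}\,\big|\,\cH_{k-1}}\;\le\; 2\exp\!\bigl(-\Omega(Y_{k-1}/K^2)\bigr).
\]
The hypothesis $|\cX^+|\ge\omega=\log^6 n$ and the inductive lower bound on $Y_{k-1}$ yield $Y_{k-1}\ge\log^6 n$, so this probability is $o(n^{-200})$ at each level. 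A union bound over the $k=O(\log n)$ levels preserves the probability $1-o(n^{-100})$. The main obstacle is the careful bookkeeping at step $k$: one must verify that conditioning on $E_H$ together with the history of the exploration changes the distribution of the next head-pairing by only $1+o(1)$, and that the collision contribution and the difference between $D_{\tin}^+$ and the empirical size-biased distribution are all absorbable into the $\rho/3$ slack. The bounded-degree restriction ($d_v^\pm<K$) is what makes the Azuma--Hoeffding increments uniformly bounded and what, via \eqref{eq:cond:K}, keeps the mean within $\rho/4$ of $\nu$.
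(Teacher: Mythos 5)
Your proposal is correct and follows essentially the same route as the paper: an inductive level-by-level argument in which the number of unvisited heads remains $n - o(n)$ by \autoref{lem:s}, the per-head contribution is truncated at $K$ via \eqref{eq:cond:K}, and a Chernoff-type concentration inequality with increments bounded by $K$ gives a super-polynomial failure probability at each level followed by a union bound over $O(\log n)$ levels. The only cosmetic difference is that you invoke Azuma--Hoeffding for the martingale increments directly, while the paper first sandwiches the conditional increment distribution between two independent bounded random variables $\bar X_i$ and $\hat X_i$ and then applies the independent-variable Hoeffding inequality; these are interchangeable here, and your displayed interval $[(1-\rho/3)\nu,\nu]$ should read $[(1-\rho/3)\nu,(1+\rho/3)\nu]$ to account for the $(1+o(1))$ prefactor, which does not affect the argument.
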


\begin{proof}
We will show~\eqref{eq:A:k0}, then~\eqref{eq:B:k0} also holds by swapping $\cX^+$ and $\cX^-$ and reversing the direction of the edges. 

    For \(k\geq 1\),  let $d_k=|\cN^{*}_k(\cX^+)|$ and let \(E_k\) denote the event
    \begin{equation}\label{eq:E:k}
        \left(1-\rho\right)\nu
       d_{k-1}
        \leq d_{k}
        \leq 
        \left(1+\rho\right)\nu
        d_{k-1}
        .
    \end{equation}
    We show that uniformly for all \(k \le  \log_{(1+\rho)\nu}\left(n^{1-\gamma}/|\cX^+|\right)\),
    \begin{equation}\label{eq:A:k0:k}
        \p{E_k \left| E_{H} \cap \left[\cap_{j=1}^{k-1} E_j \right] \right.}
        = 1-o(n^{-1000})
        .
    \end{equation}
    from which~\eqref{eq:A:k0} follows directly.

 We may assume that $|\cX^+|\leq n^{1-\gamma}$, as otherwise there is nothing to prove. The event \(\cap_{i=1}^{k-1} E_i\) implies that $d_{k-1}\leq n^{1-\gamma}$ and $\sum_{i=0}^{k-1} d_i+|\cV(H)|= O(n^{1-\gamma})$.

Consider the exploration process defined in~\autoref{sec:explore} and introduce the following three modifications:
\begin{itemize}
    \item[-] initially, we let $\cA_0^+= \cX^+$ and $\cA_0^-=  \cE^-(\cV(\cX^{+}))$;
    \item[-] in  (iii), if $e^-_i\in \cL^-$, then we let \(\cA_{i}^{\pm} = \cA_{i-1}^{\pm}\setminus
        \{e_i^\pm\}\);
    \item[-] in (iii), if \(e_{i}^{-} \in \cF^{-}(H)\), we do not terminate the process and let  \(\cA_{i}^{\pm} = \cA_{i-1}^{\pm}\setminus
        \{e_i^\pm\}\).
\end{itemize}
Now the process generates a collection of rooted trees $\{\cT_{e^+}(i)\}_{e^+\in \cX^+}$. As in~\autoref{rem:distances}, the union of the \(k\)-th level of each tree is equal to $\cN_{k}^{*}(\cX^+)$ so it suffices to study the process.

Recall that $i_k$ is the last time we pair a tail at distance $k$ from $\cX^+$, so the $k$-th epoch of the
process goes from time $i_{k-1}+1$ to $i_{k-1}+d_{k-1}$, and $d_k$  is precisely the number of tails that
have been activated during this epoch. Let $X_i=|\cA_i|-|\cA_{i-1}|+1$. The only way we activate new
tails at the $i$-th step is if $e^-_i\in \cU^-_{i-1}\setminus \cL^-$, in which case we activate
$|\cE^+(v_i)|$ tails. Thus,
\begin{align*}
X_{i}=|\cE^+(v_i)|  \mathbb 1(e^-_i \in \cU^-_{i-1}\setminus \cL^-)
\end{align*} 
and
$$
d_k= \sum_{i=i_{k-1}+1}^{i_{k-1}+d_{k-1}} X_i.
$$

All the heads that were \emph{paired} or \emph{fatal} at the beginning of the process, are incident
to a vertex in
$\cV(H)$. All heads that have been paired before the \(k\)-th epoch are incident to a vertex
incident to  $\cN^{*}_{< k}(\cX^+)$ and there are at most $\sum_{i=0}^{k-1} d_i= O(n^{1-\gamma})$
such vertices. By applying~\autoref{lem:s} to the set of vertices incident to $\cE^-\setminus
\cU_{i-1}^-$ we have $|\cE^-\setminus \cU_{i-1}^-|= o(n^{1-\gamma/2})$.

Let \(\cH_{i-1}\) denote a history of the process before the \(i\)-th match that is
compatible with $E_H\cap \left[\cap_{j=1}^{k-1} E_j\right]$.
Then, for all \(\ell \in \{0,\dots, K-1\}\),
    \begin{align}
        \p{X_{i}=\ell \left| \cH_{i-1}\right.}
        &
        =
        \frac{\sum_{e^-\in \cU_{i-1}^-\setminus \cL^-} \mathbb 1(d^{+}(v(e^-))=\ell)}{m_n-|\cP^-(H)|-(i-1)}
        \\
        &
 		\geq
        \frac{\sum_{e^-\in \cE^-\setminus \cL^-} \mathbb 1(d^{+}(v(e^-))=\ell)}{m_n}-\frac{|\cE^-\setminus \cU_{i-1}^-|}{m_n}
        \\       
        &
        \geq
        \p{\left[ \dninp=\ell \right] \cap \left[ \dninm < K \right]}
        - 
        n^{-\gamma/4}
        \eqqcolon
        b_{n,\ell}
        .
    \end{align}

    Let \(\bar{X}_{i}\) and \(\hat{X}_{i}\) be two independent random variables with distributions
    \begin{equation}
        \p{\bar{X}_{i}=\ell}
        =
        \begin{cases}
            1- \sum_{j=1}^{K} (b_{n,j} \vee 0), 
            &
            \text{if }\ell=0
            \\
            b_{n,\ell} \vee 0,
            &
            \text{if }1\leq \ell<K
            \\
            0
            &
            \text{if }\ell\geq K
        \end{cases}
        ,
    \end{equation}
    and
    \begin{equation}
        \p{\hat{X}_{i}=\ell}
        =
        \begin{cases}
            b_{n,\ell} \vee 0,
            &
            \text{if }0\leq \ell<K-1
            \\
            1- \sum_{j=0}^{K-1} (b_{n,j} \wedge 0), 
            &
            \text{if }\ell=K-1
            \\
            0
            &
            \text{if }\ell\geq K
        \end{cases}
        .
    \end{equation}
    Then \(\bar{X_{i}} \le_{\text{st}} \left(X_{i} \left| \cH_{i-1}\right.\right)\le_{\text{st}} \hat{X}_{i}\). Moreover,
    \begin{align*}\label{eq:bar:X}
        \E{\bar{X}_{i}}
        =
        (1+o(1))
        \E{\dinp \mathbb{1}_{[\dinp<K] \cap [\dinm<K]}}
        -
        Kn^{-\gamma/4}
        \ge \left( 1- \frac{\rho}{2} \right) \nu
        ,
    \end{align*}
    where the last step follows from our choice of \(K\) that satisfies~\eqref{eq:cond:K}.   Similar computations show that
    \(
        \E{\hat{X}_{i}}
        \le
          \left(1+{\rho}/{2}\right)
          \nu
        .
    \)

    Note that both \(\hat{X}_i\) and \(\bar{X}_i\) are bounded random variables so we 
    can applying Hoeffding's inequality \cite{hoeffding1963a}. It follows that
    \begin{align*}
       \p{d_k < (1-\rho){\nu}d_{k-1} \cond \cH_{i_{k-1}}}
        &
        \le
        \p{\sum_{i=i_{k-1}+1}^{i_{k-1}+d_{k-1}} (\bar{X}_{i}- \e \bar{X}_i) >
          (\rho/2) \nu d_{k-1}
        \cond \cH_{i_{k-1}}}
        \\
        &
        \leq \exp \left(-\frac{\rho^2\nu d_{k-1} }{8 K^2}\right)
        =
        o\left( n^{-1000} \right).
    \end{align*}
    where we used that $d_{k-1}\geq d_0\geq \omega$ by the conditioning \(\cH_{k-1}\). Similarly, using $\hat X_i$, we obtain
    \begin{align*}
        &
        \p{d_k > (1+\rho){\nu}d_{k-1} \cond \cH_{i_{k-1}} }
        =
        o\left( n^{-1000} \right).
    \end{align*}
    This proves \eqref{eq:A:k0:k} and \eqref{eq:A:k0}. The proof of \eqref{eq:B:k0} is 
    analogous.
\end{proof}

\begin{proof}[Proof of upper bound in \autoref{prop:connect}]
    Let $\beta,\rho>0$. Let \(k^{+}:=\lceil \log_{(1-\rho)\nu} (n^{1/2+\beta}/|\cX^+|)\rceil\)  and
    \(k^{-}:= \lceil\log_{(1-\rho)\nu} (n^{1/2+\beta}/|\cX^-|)\rceil\) .  By choosing $\beta,\rho$
    small enough with respect to $\varepsilon$, we have $k^{+}+k^{-}+1\leq (1+\varepsilon)\log_\nu
    n$.  
    It follows from \autoref{lem:expand} that with probability \(1-o(n^{-100})\),
    \begin{equation}
        |\cN^{*}_{k^{+}}(\cX^+)| \ge \left((1-\rho) \nu\right)^{k^+}|\cX^+| \geq n^{\frac{1}{2}+\beta},
    \end{equation}
	and similarly $ |\cN^{*}_{k^{-}}(\cX^-)| \ge n^{\frac{1}{2}+\beta}$.
    
    If a tail in \(\cN^{*}_{< k^{+}}(\cX^+)\) has been paired with a head in \(\cN^{*}_{<
    k^{-}}(\cX^-)\), then \(\dist(\cX^+, \cX^-) \le  k^+ +k^-\).  Otherwise, the probability
    that no tail in \(\cN^{*}_{k^{+}}(\cX^+)\) is paired to a head in \(\cN^{*}_{k^{-}}(\cX^-)\) is
    at most
    \begin{equation}
        \left( 
            1- \frac{
                n^{1/2+\beta}
            }{m_n}
        \right)
        ^{
            n^{1/2+\beta},
        }
        =
        o(n^{-100})
        .
    \end{equation}
    Therefore, that probability of \(\dist(\cX^+,\cX^-) > k^{+}+k^{-}+1\) is \(o(n^{-100})\). 
\end{proof}

\subsection{Distance between thin neighbourhoods}

{We will use~\autoref{prop:connect} to show that the distance between thin neighbourhoods are about
\(\log_{\nu} n\).}
\begin{lemma}\label{lem:no:towers}
    Assume that \(\hnu_{\pm} > 0\).
    Let \(t^{+}\), \(t^{-}\) be as in \eqref{EOSJR} and let \(B_{3}\) be as in
    \autoref{lem:tower:1}.
    Let
    \begin{equation}\label{XLQSQ}
        k_n= t^{+}+ t^{-}+\log_\nu n.
    \end{equation}
    Let
    \begin{equation}
        B_{4}(e^+,e^-)
        =
        B_{4}(e^+,e^-;\varepsilon)
        = B_{3}(e^+,e^-;\varepsilon/6) \cap 
        \left[ 
            \abs{\frac{\dist(e^+,e^-)}{k_n}-1} > \varepsilon
        \right]
        ,
    \end{equation}
    and let
    \begin{equation}\label{NMVRB}
        B_{4}=\bigcup_{e^+\in \cE^+} \bigcup_{e^-\in \cE^-} B_{4}(e^+,e^-)
    \end{equation}   
    Then \(
        \p{
            B_{4}
        }
        =
        o(1)        
        .
    \)
\end{lemma}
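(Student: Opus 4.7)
The plan is a union bound: since there are $m_n^2=O(n^2)$ pairs $(e^+,e^-)\in\cE^+\times\cE^-$, it suffices to prove $\p{B_4(e^+,e^-)}=o(n^{-2})$ for each fixed pair and then sum. For a given pair, I first reveal a partial pairing $H$ by running the out-exploration of~\autoref{sec:explore} from $e^+$ up to the expansion time $t_\omega(e^+)$, and then, conditionally on this first exploration's outcome, the reverse in-exploration from $e^-$ up to $t_\omega(e^-)$. On $B_3(e^+,e^-;\varepsilon/6)$ one has $t_\omega(e^\pm)\in((1-\varepsilon/6)t^\pm,(1+\varepsilon/6)t^\pm)$, $|\cN_{t_\omega(e^\pm)}(e^\pm)|\in[\omega,\omega^2)$, and $|\cV(H)|=O(\omega^2\log n)\le n^{1-\gamma}$ for any $\gamma<1$. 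Setting $\cX^\pm:=\cN_{t_\omega(e^\pm)}(e^\pm)$, the triple $(H,\cX^+,\cX^-)$ satisfies the hypotheses of~\autoref{prop:connect}.

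Invoking~\autoref{prop:connect} with its parameter set equal to our $\varepsilon$ gives, conditionally on $E_H$ and outside an event of probability $o(n^{-\varepsilon/2})$, both
\[
\dist(\cX^+,\cX^-)\le (1+\varepsilon)\log_\nu n
\qquad\text{and}\qquad
\dist(\cX^+,\cX^-,[n]\setminus\cV(H))\ge (1-\varepsilon)\log_\nu n.
\]
Combined with the triangle inequality, the upper bound gives $\dist(e^+,e^-)\le t_\omega(e^+)+t_\omega(e^-)+(1+\varepsilon)\log_\nu n\le (1+\varepsilon)k_n$. For the lower bound the key combinatorial observation is that, on any shortest directed path $v(e^+)=u_0\to\dots\to u_L=v(e^-)$, every $u_t$ with $t_\omega(e^+)<t<L-t_\omega(e^-)$ lies outside $\cV(H)$: indeed, if $u_t$ lay in the out-BFS ball then $\dist(v(e^+),u_t)\le t_\omega(e^+)<t$, contradicting the minimality of $L$, and the in-BFS case is analogous. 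Hence the middle segment from $u_{t_\omega(e^+)}$ to $u_{L-t_\omega(e^-)}$ is a simple path from a tail in $\cX^+$ to a head in $\cX^-$ of length $L-t_\omega(e^+)-t_\omega(e^-)$ whose intermediate vertices all lie in $[n]\setminus\cV(H)$, giving $L\ge t_\omega(e^+)+t_\omega(e^-)+(1-\varepsilon)\log_\nu n\ge (1-\varepsilon)k_n$.

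Sequentially applying the upper bound in~\autoref{lem:tower:0} (first for $e^+$, then for $e^-$ conditional on the out-BFS's partial pairing $H_+$, whose vertex set has polylogarithmic size) yields $\p{B_3(e^+,e^-)}\le (n^{-1+\varepsilon/4})^2=n^{-2+\varepsilon/2}$. Combining,
\[
\p{B_4(e^+,e^-)}\le \p{B_3(e^+,e^-)}\cdot o(n^{-\varepsilon/2})=o(n^{-2}),
\]
and summing over the $O(n^2)$ pairs proves $\p{B_4}=o(1)$. The main obstacle is the lower-bound combinatorial argument above, which is what upgrades the Proposition's restricted distance bound into a genuine lower bound on $\dist(e^+,e^-)$; a secondary subtlety is the exponent balance, which forces applying~\autoref{prop:connect} with its parameter equal to our $\varepsilon$ (not smaller), the $\varepsilon/6$ slack in $B_3$'s definition being precisely what makes the triangle-inequality bounds come out as $(1\pm\varepsilon)k_n$ with this choice.
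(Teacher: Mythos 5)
Your proposal is correct and follows the paper's proof essentially step for step: a union bound over the $O(n^2)$ pairs, two applications of \autoref{lem:tower:0}(i) (the second conditional on the out-exploration's partial pairing) to bound $\p{B_3(e^+,e^-;\varepsilon/6)}$ by $n^{-2+\varepsilon/2}$, and an application of \autoref{prop:connect} conditional on $E_H$ to bound the conditional deviation probability by $o(n^{-\varepsilon/2})$, with the $\varepsilon/6$ slack arranged exactly so the two prop:connect bounds translate into $(1\pm\varepsilon)k_n$. Your explicit combinatorial argument---that the middle segment of a shortest directed path from $v(e^+)$ to $v(e^-)$ has all its interior vertices outside $\cV(H)$, so its length is controlled by $\dist(\cX^+,\cX^-,[n]\setminus\cV(H))$---is a careful spelling-out of the step the paper dispatches with the terse remark ``there is no simple path from $\cX^+$ to $\cX^-$ that uses vertices in $H$'', and your phrasing is actually the more precise one.
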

\begin{proof}
Fix $e^+\in \cE^+$ and $e^-\in \cE^-$. It suffices to show that $\p{B_{4}(e^+,e^-)}=o(n^{-2})$.

We have 
\begin{equation}\label{eq:B4:1}
    \p{B_{4}(e^+,e^-)}
    = 
    \p{B_{3}\left(e^+,e^-;\frac{\varepsilon}{6}\right)}
    \p{
        \abs{\frac{\dist(e^+,e^-)}{k_n}-1} > \varepsilon 
        \cond 
        B_{3}\left(e^+,e^-;\frac{\varepsilon}{6}\right)
    }
    .
\end{equation}
By applying (i) of \autoref{lem:tower:0} twice where in the second time we condition on $E_{H_0}$, where $H_0$ is the partial pairing resulting from the exploration of the out-neighbourhoods of $e^+$ and satisfies $|\cV(H_0)|=O(\log^7 n)$, we obtain 
\begin{equation}\label{eq:B4:2}
\p{B_{3}(e^+,e^-;\varepsilon/6)} \leq n^{-2+\epsilon/2}
\end{equation} 
Note that by the choice of $H$, the two neighbourhoods are disjoint.

Let $H$ be the partial pairing of the edges exposed during the previous exploration process,
conditional on $B_{3}(e^+,e^-)$, so $|\cV(H)|=O(\log^7 n)$.  Let
$\cX^+=\cN_{t_\omega(e^+)}(e^+)$ and $\cX^-=\cN_{t_\omega(e^-)}(e^-)$. Note that
$(H,\cX^+,\cX^-)$ satisfies~\autoref{cond:H} and $|\cX^+|,|\cX^-|\in [\omega,\omega^2)$.  Since
$t_\omega(e^\pm)\leq (1+\varepsilon/6)t^\pm$, by applying \eqref{eq:conn:1}
in \autoref{prop:connect}, 
\begin{align}
\p{\dist(e^+,e^-)> (1+\varepsilon)k_n\cond E_H} 
&
=\p{\dist(\cX^+,\cX^-)> (1+\varepsilon) \log_\nu n\cond E_H}
\\ 
&
=
o(n^{-100})
.
\end{align}

Note that there is no simple path from $\cX^+$ to $\cX^-$ that uses vertices in $H$.  Since
$t_\omega(e^\pm)\geq (1-\varepsilon/6)t^\pm$, if follows from \eqref{eq:conn} in
\autoref{prop:connect} that
\begin{align}
\p{\dist(e^+,e^-)< (1-\varepsilon) k_n\cond E_H} 
&\leq 
\p
{
    \dist(\cX^+,\cX^-,[n]\setminus\cV(H))< (1-\varepsilon) \log_\nu n\cond E_H
}
\\
&=
o(n^{-\varepsilon/2})
.
\end{align}
As this is true for any $H$, we have
\begin{equation}\label{eq:B4:3}
\p{\abs{\frac{\dist(e^+,e^-)}{k_n}-1} > \varepsilon \cond B_{3}(e^+,e^-)}= o(n^{-\epsilon/2})
\end{equation}
and the lemma follows by putting~\eqref{eq:B4:2} and~\eqref{eq:B4:3} in~\eqref{eq:B4:1}.
\end{proof}

Next lemma holds in the case where in- or out- thin neighbourhoods do not exist.
\begin{lemma}\label{lem:no:towers2}
    Let \(t^{+}\), \(t^{-}\) be as in \eqref{EOSJR}, $k_n= t^{+}+t^-+ \log_\nu n$ and let \(B_{2}\) be as in
    \autoref{lem:tower:0}. Then
    \begin{enumerate}[\normalfont(i)]
    \item if $\hnu_+>0$ and $\hnu_-=0$ (so $t^-=0$), for every $\cX^-\subseteq \cE^-$ with $|\cX^-|\in [\omega,\omega^2]$ define \begin{equation}
        B_{4}(e^+,\cX^-)
        =
        B_{4}(e^+,\cX^-;\varepsilon)
        = B_{2}(e^+;\varepsilon/3) \cap 
        \left[ 
            \abs{\frac{\dist(e^+,\cX^-)}{k_n}-1} > \varepsilon
        \right],
    \end{equation}
    and $B_{4}(\cX^-)=\bigcup_{e^+\in \cE^+}  B_{4}(e^+,\cX^-)$. Then \(\p{
            B_{4} (\cX^-)}=o(1)\).
    \item if $\hnu_+=0$ and $\hnu_->0$ (so $t^+=0$), for every $\cX^+\subseteq \cE^+$ with $|\cX^+|\in [\omega,\omega^2]$ define \begin{equation}
        B_{4}(e^-,\cX^+)
        =
        B_{4}(e^-,\cX^+;\varepsilon)
        = B_{2}(e^-;\varepsilon/3) \cap 
        \left[ 
            \abs{\frac{\dist(\cX^+,e^-)}{k_n}-1} > \varepsilon
        \right],
    \end{equation}
    and $B_{4}(\cX^+)=\bigcup_{e^-\in \cE^-}  B_{4}(e^-,\cX^+)$. Then \(\p{
            B_{4} (\cX^+)}=o(1)\).
    \end{enumerate} 
\end{lemma}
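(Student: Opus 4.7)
The plan is to adapt the proof of~\autoref{lem:no:towers} to the situation where one of the thin neighbourhoods collapses. I present case (i); case (ii) is symmetric after swapping in- and out-exploration and the roles of $e^\pm$ and $\cX^\mp$.

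Fix $e^+\in \cE^+$. I would decompose
\begin{equation}
\p{B_4(e^+,\cX^-)} = \p{B_2(e^+;\varepsilon/3)}\cdot \p{\abs{\dist(e^+,\cX^-)/k_n - 1}>\varepsilon \cond B_2(e^+;\varepsilon/3)}.
\end{equation}
Since $\hnu_+>0$, \autoref{lem:tower:0}(i) applied with the empty partial pairing bounds the first factor by $n^{-1+\varepsilon/2}$. For the second factor, condition on $B_2(e^+;\varepsilon/3)$ and let $H$ be the partial pairing revealed by the out-exploration of $e^+$ up to the end of epoch $t_\omega(e^+)-1$. Under this conditioning $|\cV(H)|=O(\log^7 n)$, $t_\omega(e^+)\in[(1-\varepsilon/3)t^+,(1+\varepsilon/3)t^+]$, and $\cX^+\coloneqq \cN_{t_\omega(e^+)}(e^+)$ satisfies $|\cX^+|\in[\omega,\omega^2)$. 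Since $|\cX^-|\cdot \Delta_n\cdot |\cV(H)|/m_n=o(1)$ by~\autoref{cor:D}, up to a negligible event we may assume $\cX^-\cap \cE^-(\cV(H))=\emptyset$, so that $(H,\cX^+,\cX^-)$ satisfies~\autoref{cond:H} and the hypotheses of~\autoref{prop:connect}.

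Applying the upper and lower bounds of~\autoref{prop:connect}, with conditional probability $1-o(n^{-\varepsilon/2})$,
\begin{equation}
(1-\varepsilon)\log_\nu n\leq \dist(\cX^+,\cX^-,[n]\setminus\cV(H))\leq \dist(\cX^+,\cX^-)\leq (1+\varepsilon)\log_\nu n.
\end{equation}
The BFS from $e^+$ pairs every tail of vertices at distance $<t_\omega(e^+)$ from $v(e^+)$, so the only unpaired tails of $\cV(H)$ are those of $\cX^+$. Looking at the last time a shortest path from $v(e^+)$ to $\cV(\cX^-)\subseteq[n]\setminus\cV(H)$ exits $\cV(H)$, the exit must traverse a tail of $\cX^+$, which gives
\begin{equation}
\dist(e^+,\cX^-)\geq t_\omega(e^+)+\dist(\cX^+,\cX^-,[n]\setminus\cV(H)),
\end{equation}
while the upper bound $\dist(e^+,\cX^-)\leq t_\omega(e^+)+\dist(\cX^+,\cX^-)$ follows from path concatenation. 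Combined with $k_n=t^++\log_\nu n$ (as $t^-=0$) and $t_\omega(e^+)\in[(1\pm\varepsilon/3)t^+]$, these yield $\dist(e^+,\cX^-)/k_n\in[1-\varepsilon,1+\varepsilon]$ with conditional probability $1-o(n^{-\varepsilon/2})$. Hence $\p{B_4(e^+,\cX^-)}=o(n^{-1})$, and a union bound over the $m_n=O(n)$ choices of $e^+$ yields $\p{B_4(\cX^-)}=o(1)$.

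The delicate step will be the last-exit argument: every vertex in $\cV(H)\setminus\cV(\cX^+)$ lies at distance strictly less than $t_\omega(e^+)$ from $v(e^+)$ and has all its out-tails paired within $\cV(H)$, so any edge leaving $\cV(H)$ must originate from a vertex in $\cV(\cX^+)$ and traverse a tail of $\cX^+$. This, together with $\cV(\cX^-)\cap\cV(H)=\emptyset$, justifies the additive form of the lower bound on $\dist(e^+,\cX^-)$. The remainder of the argument is a routine adaptation of~\autoref{lem:no:towers}, and case (ii) is identical after reversing all half-edge directions.
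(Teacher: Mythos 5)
Your proposal is correct and follows essentially the same route as the paper, which only sketches this lemma: apply \autoref{lem:tower:0}(i) once to bound $\p{B_2(e^+;\varepsilon/3)}\le n^{-1+\varepsilon/2}$, set $\cX^+=\cN_{t_\omega(e^+)}(e^+)$ and $H$ to be the partial pairing from exploring $\cN_{\le t_\omega(e^+)}(e^+)$, then invoke \autoref{prop:connect} and union bound over $e^+$. The extra care you take — checking that $\cX^-$ avoids $\cV(H)$ and the last-exit bound $\dist(e^+,\cX^-)\ge t_\omega(e^+)+\dist(\cX^+,\cX^-,[n]\setminus\cV(H))$ — is exactly the kind of detail the paper handles implicitly in the proof of \autoref{lem:no:towers} (``Note that by the choice of $H$, the two neighbourhoods are disjoint'' and ``there is no simple path from $\cX^+$ to $\cX^-$ that uses vertices in $H$''), so you are filling in the sketch in the intended way.
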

\begin{proof}[Sketch of the proof]
We only sketch the proof of (i) as both proofs are analogous and similar to the proof
of~\autoref{lem:no:towers}. We apply~\autoref{lem:tower:0} (i) only once to upper bound the
probability of $B_{2}(e^-;\varepsilon/3)$ by $n^{-1+\epsilon/2}$, so such tails are rare but
possible. Then we let $\cX^+=\cN_{t_\omega(e^+)}(e^+)$ and we use~\autoref{prop:connect} to connect
$\cX^+$ and $\cX^-$ whp with $H$ being the partial pairing resulting from the exploration of $\cN_{\leq
t_\omega(e^+)}(e^+)$. 
\end{proof}

\section{Diameter}\label{sec:final}

With all the preparation at hand, the proof of \autoref{thm:diam} is readily available.

\subsection{Supercritical: Lower bound}\label{sec:final:up}

We split into cases depending on $\hnu_{+}$ and $\hnu_{-}$. If $\hnu_{+},\hnu_{-}>0$,
by~\autoref{lem:tower:1} (i), whp there exist a tail \(e^{+}\) and a head \(e^{-}\) satisfying
$B_3(e^+,e^-;\varepsilon/6)$. By~\autoref{lem:no:towers} whp there is no such pair also satisfying
$\dist(e^+,e^-)\notin ((1- \varepsilon)k_n,\infty)$, where $k_n= t^{+}+t^-+ \log_\nu n$.

If $\hnu_{+}>0$ and $\hnu_{-}=0$, then fix an arbitrary set of heads $\cX^-$ with $|\cX^-|\in
[\omega,\omega^2]$. By \autoref{lem:tower:1} (ii), there exists  a tail $e^+$ satisfying $B_2(e^+)$,
but by~\autoref{lem:no:towers2} (i), no such tail satisfies $\dist(e^+,\cX^-)\notin ((1-
\varepsilon)k_n,\infty)$. The proof is analogous if $\hnu_{+}=0$ and $\hnu_{-}>0$.

If $\hnu_{+}=\hnu_{-}=0$ (so $t^+=t^-=0$), then we fix two arbitrary sets $\cX^+\subseteq \cE^+$ and
$\cX^-\subseteq \cE^-$  with $|\cX^+|,|\cX^-|\in [\omega,\omega^2]$. By~\autoref{prop:connect} with
$H$ the empty pairing, we obtain $\dist(\cX^+,\cX^-)\in (1\pm \varepsilon)\log_\nu n= (1\pm
\varepsilon)k_n$.

In each case we obtain the existence of $e^+\in \cE^+$ and $e^-\in \cE^-$ at distance in
$((1-\varepsilon)\log n,\infty)$. 
Let \(v^{+}\) be the node incident to the head paired with \(e^{+}\).
Let \(v^{-}\) be the node incident to the tail paired with \(e^{-}\). Then \( \dist(v^{+}, v^{-}) =
\dist(e^{+}, e^{-}) -2, \) concluding the proof of the lower bound in~(i) of \autoref{thm:diam}.

\subsection{Supercritical: Upper bound}

Assume first that \(\hnu_{+}>0\) and \(\hnu_{-}>0\).  By \autoref{lem:tower:2}, for every pair of
half-edges \(e^{+} \in \cE^{+}\) and \(e^{-} \in \cE^{-}\), whp $B_1^c$ holds; that is, either
{$C_1$}: there are no edges at distance more than \( (1+\varepsilon) t^+\) from \(e^{+}\), or
{$C_2$}: there are no edges at distance more than \( (1+\varepsilon) t^-\) to \(e^{-}\), or
{$C_3$}:
\begin{align*}
    &
    t_{\omega}(e^{+}) <
    (1+\varepsilon) t^+
    ,
    \qquad
    t_{\omega}(e^{-}) <
    (1+\varepsilon) t^-
    ,
\end{align*}
and,
\begin{equation}
    \abs{
       \cX^+
    }
    > \omega
    ,
    \qquad
    \abs{
        \cX^-
    }
    > \omega
    ,
\end{equation}
 where $\cX^+= \cN_{t_{\omega}(e^{+})}(e^{+})$ and $\cX^-=\cN_{t_{\omega}(e^{-})}(e^{-})$.

{If $C_1\cup C_2$ holds}, then either
\begin{equation}
    \dist(e^{+},e^{-}) < 
    (1+\varepsilon) (t^+\wedge t^-)
    ,
\end{equation}
or there is no path from  \(e^{+}\) to \(e^{-}\)  and \(\dist(e^{+},e^{-})=\infty\).

Suppose that $C_3$ holds.  If a tail in \(\cN_{<t_\omega(e^+)}(e^+)\) has been paired with a head in \(\cN_{< t_\omega(e^-)}(e^-)\), then \(\dist(e^+, e^-) < (1+\varepsilon)(t^++t^-)\) and we are done.  
Otherwise, let $H$ be the partial pairing induced by $\cN_{<t_\omega(e^+)}(e^+)$ and \(\cN_{< t_\omega(e^-)}(e^-)\). Since $(H,\cX^+,\cX^-)$ satisfies~\autoref{cond:H} and $|\cX^+|,|\cX^-|>\omega$, it follows from \autoref{prop:connect} that with probability \(o(n^{-100})\),
\(\dist(\cX,^+\cX^-) > (1+\varepsilon)\log_\nu n\). Thus
\begin{align*}    
    \p{ 
    (1+\varepsilon) 
    \left( 
        t^++t^-+\log_\nu n
    \right)
    < \dist(e^{+},e^{-}) <\infty
        \cond C_3}
    &
    \le
    \p{
        \dist(\cX^-,\cX^-) >(1+\varepsilon)\log_\nu n
        \cond C_3
    }\\
    &
    = o(n^{-100})
    .
\end{align*}
The upper bound for the diameter follows from applying a union bound over all \(e^{\pm} \in
\cE^{\pm}\).

In the case \(\hnu_{\pm} = 0\), the above argument still works by replacing \(
(1+\varepsilon)t^{\pm}\) by \(\varepsilon \log n\).

\subsection{Subcritical}

Part (ii) of \autoref{thm:diam} follows immediately from \autoref{EIYMO}.  By (i) of \autoref{EIYMO}, whp for any pair of
nodes \( (u,v)\) in \(\vecGn\), either \(\dist(u,v) < (1+\delta) \log_{1/\nu} n\) or \(\dist(u,v) =
\infty\).  By (ii) of \autoref{EIYMO}, whp there exist a tail \(e^{+}\) and a head \(e^{-}\) such
that \(\dist(e^{+},e^{-}) \in ( (1-\delta)\log_{{1}/{\nu}} n, \infty)\).   Let \(v^{+}\) be the
node incident to the head paired with \(e^{+}\).  Let \(v^{-}\) be the node incident to the tail
paired with \(e^{-}\). Then \( \dist(v^{+}, v^{-}) = \dist(e^{+}, e^{-}) -2\).

\section{Applications}
\label{sec:app}

In this section, we give some applications of our results in the supercritical regime without delving into too much details.

\subsection{Typical distance}
\label{sec:app:typical}

Let \(U_{1}, U_{2} \in [n]\) be two vertices chosen uniformly at random.  Then
\(\dist(U_{1}, U_{2})\) is called the \emph{typical distance} of \(\vecGn\).  A distributional
result of \(\dist(U_{1},U_{2})\) is given by  van der Hoorn and Olvera-Cravioto~\cite{hoorn2018}.  
Here we give a weaker result under weaker assumptions:
\begin{thm}\label{thm:typical}
    Assume \autoref{cond:main} and \(\nu > 1\).
    Let \(U_{1},U_{2} \in [n]\) be two vertices chosen uniformly at random in \(\vecGn\). Then for all
    \(\varepsilon>0\),
    \begin{equation}\label{eq:typical:low}
        \p{
            \abs{
                \frac{
                    \dist{}(U_{1},U_{2})
                }{
                    \log_{\nu} n
                }
                -1
            }
            <
            \varepsilon
            \cond
            \dist(U_1,U_2) < \infty
        }
        \to
        1
        .
    \end{equation}
\end{thm}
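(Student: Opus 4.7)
The plan is to show concentration of $\dist(U_1,U_2)$ around $\log_\nu n$ by bounding the two extreme deviations in unconditional probability. Since the supercritical directed configuration model has a giant strongly connected component of linear size~\cite{cooper2004,graf2016strongly}, we have $\p{\dist(U_1,U_2)<\infty}=\Theta(1)$, so any unconditional $o(1)$ estimate transfers to the conditional probability in the statement.

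For the lower bound, I would apply the path-counting Lemma~\ref{lem:Pk:mean} with $H=\emptyset$, $\cI=[n]$, $\cX^+=\cE^+(U_1)$, $\cX^-=\cE^-(U_2)$. With $s=0$ and $i\le \ell-2=O(\log n)$, the product in~\eqref{eq:Pk:up} is $1+o(1)$, so the conditional expectation of $P_\ell(\cE^+(U_1),\cE^-(U_2))$ given $U_1,U_2$ is at most $(1+o(1))\nu^{\ell-1}d_{U_1}^+d_{U_2}^-/m_n$. Averaging over $(U_1,U_2)$ using $\sum_{u_1,u_2}d^+_{u_1}d^-_{u_2}=m_n^2$ and summing the geometric series over $\ell\le k\coloneqq \lceil(1-\varepsilon)\log_\nu n\rceil$, Markov's inequality yields $\p{\dist(U_1,U_2)\le k}=O(\nu^k/n)=O(n^{-\varepsilon})$.

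For the upper bound, I would follow the strategy of Section~\ref{sec:final:up}. Let $F_1$ be the event that the out-edge-BFS from $U_1$ ever attains a frontier of size at least $\omega$, and let $F_2$ be the analogous event for the in-edge-BFS from $U_2$. On $F_1\cap F_2$, the branching-process coupling of Lemma~\ref{lem:eq_T_GW} combined with Theorem~\ref{thm:kesten} shows that the expansion times $t^{\pm}_\omega$ are $O(\log_\nu \omega)=o(\log n)$ whp, and the overshoot bound $\Delta_n=o(\sqrt n)$ from Corollary~\ref{cor:D} places the resulting frontiers $\cX^\pm$ in $[\omega,\omega^2]$. The partial pairing $H$ produced by both explorations satisfies $|\cV(H)|=O(\log^7 n)=o(n^{1-\gamma})$ for any $\gamma<1$, so Proposition~\ref{prop:connect}~\eqref{eq:conn:1} gives $\dist(\cX^+,\cX^-)\le (1+\varepsilon)\log_\nu n$ whp. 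The triangle inequality then yields $\dist(U_1,U_2)\le (1+O(\varepsilon))\log_\nu n$ on $F_1\cap F_2$.

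The main obstacle is controlling $(F_1^c\cup F_2^c)\cap\{\dist(U_1,U_2)<\infty\}$, that is, showing that conditioning on the distance being finite forces both explorations to expand. By Lemma~\ref{lem:tower:2}, outside an event of probability $o(1)$ no half-edge has a thin neighbourhood persisting past time $(1+\delta)t^+$; off this event, $F_1^c$ forces the out-component of $U_1$ to be finite with at most $\omega\cdot(1+\delta)t^+=O(\log^7 n)$ vertices, so the probability that the random $U_2$ lies in this out-component is $O(\log^7 n/n)=o(1)$. The symmetric argument handles $F_2^c$, completing the reduction.
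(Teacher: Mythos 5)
Your proposal is sound and follows what the paper calls "an easy application of Proposition~\ref{prop:connect}." A few remarks.

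\textbf{Lower bound.} Applying the path-counting Lemma~\ref{lem:Pk:mean} directly with $\cX^+=\cE^+(U_1)$, $\cX^-=\cE^-(U_2)$, $H=\emptyset$, $\cI=[n]$ is a legitimate (and in some ways cleaner) shortcut: it avoids having to control the overshoot needed to apply the $|\cX^\pm|\le\omega^2$ hypothesis of \eqref{eq:conn}. The only thing to verify is that $r=|\{i:d_i^-d_i^+\ge1\}|\le m_n$, which makes the product in \eqref{eq:Pk:up} at most $1$; with $\sum_{u_1,u_2}d^+_{u_1}d^-_{u_2}=m_n^2$ and $m_n\sim\lambda n$ this gives exactly what you state. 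So that part is correct.

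\textbf{Upper bound.} The outline is correct, but one sentence is wrong as stated: Corollary~\ref{cor:D} gives $\Delta_n=o(\sqrt{n})$, so the overshoot $|\cX^\pm|\le |\cN_{t_\omega-1}|\Delta_n<\omega\Delta_n$ is \emph{not} bounded by $\omega^2=\log^{12}n$. This does not affect the conclusion, because \eqref{eq:conn:1} only requires $|\cX^\pm|\ge\omega$, not $|\cX^\pm|\le\omega^2$; but the justification you give is not the right one (if you did need $|\cX^\pm|\le\omega^2$, as in the lower bound of Proposition~\ref{prop:connect}, you would need a Markov-type bound as in \eqref{eq:A:1:4} rather than the worst-case degree bound). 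Also, for the claim that $t_\omega^\pm=o(\log n)$ whp on $F_1\cap F_2$, one should invoke the coupling plus Theorem~\ref{thm:BP}, which controls $\p{t_\omega<\infty,\,t_\omega>r}\le C\hnu_\pm^{\,r-t_\xi(\omega)}$, rather than Theorem~\ref{thm:kesten} by itself; this is a minor attribution issue.

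\textbf{Reduction and finiteness.} The reduction of $(F_1^c\cup F_2^c)\cap\{\dist<\infty\}$ via Lemma~\ref{lem:tower:2} is correct: on $B_1^c\cap F_1^c$ every tail of $U_1$ has a neighbourhood that dies by time $(1+\delta)t^+$, so the out-component of $U_1$ has depth $O(\log n)$ and width $<\omega$, hence $O(\log^7n)$ vertices, and a union over $U_2$ gives $o(1)$. Finally, the citation of \cite{cooper2004,graf2016strongly} to obtain $\p{\dist(U_1,U_2)<\infty}=\Theta(1)$ is slightly out of place, since those results assume stronger maximum-degree bounds than Condition~\ref{cond:main} provides; however, the needed lower bound $\p{\dist(U_1,U_2)<\infty}\ge c>0$ does follow from the tools of the present paper (both explorations survive and expand to size $\ge\omega$ with probability bounded away from $0$, and then Proposition~\ref{prop:connect}~\eqref{eq:conn:1} connects them whp), so this is a presentational issue rather than a gap.

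Overall: correct, essentially the intended approach, with the lower bound done a bit more directly than a literal invocation of Proposition~\ref{prop:connect}.
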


The proof of the theorem is an easy application of \autoref{prop:connect} and we leave it to the reader.

\subsection{Other random graphs}

In many random digraphs, the degree sequence is not fixed but random. However, many such models,
including \(d\)-out regular digraphs and binomial random digraph described below, can be studied via
the directed configuration model using the following simple lemma whose proof we omit:

\begin{lemma}\label{NYDQE}
    Assume \(\dsD_{n}\) is a random directed multi/simple graph of \(n\) vertices which is
    uniformly random conditioning on its degree sequence. Let \(D_{n}\) be the in- and out-degree of
    a uniform random vertex in \(\dsD_{n}\) and assume that \(D_{n}\) satisfies \autoref{cond:main}
    with some distribution \(D\) on \(\dsZ_{\ge 0}^{2}\). Let
    \(\vecGn\) be the directed configuration model satisfying \autoref{cond:main} with the same
    \(D\). If \(\vecGn\) has a property \(P_{n}\) whp, then \(\dsD_{n}\) has property
    \(P_{n}\) whp.
\end{lemma}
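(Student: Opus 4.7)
The plan is to condition on the random bi-degree sequence $\vbfd_n^*$ of $\dsD_{n}$ and reduce to the directed configuration model. By the uniformity assumption, conditionally on $\vbfd_n^*=\vbfd_n$ we have $\dsD_{n}\eql\vecGn(\vbfd_n)$ in the multigraph case and $\dsD_{n}\eql\vecGn^{s}(\vbfd_n)$ in the simple case, where $\vecGn^{s}(\vbfd_n)$ denotes the uniform simple digraph with degree sequence $\vbfd_n$.

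The hypothesis that the random $D_{n}$ satisfies \autoref{cond:main} with limit $D$ means that, with probability tending to $1$, the realization $\vbfd_n^*$ is a degree sequence fulfilling the convergence statements (i)--(iii) of \autoref{cond:main}; let $\cG_n$ denote this event. On $\cG_n$, we may treat $\vbfd_n^*$ as a deterministic sequence satisfying \autoref{cond:main} with limit $D$, so the hypothesis applied to $\vecGn(\vbfd_n^*)$ yields $P_n$ whp. For the simple case, \autoref{rem:simple} tells us that $\vecGn^{s}(\vbfd_n^*)$ is distributed as $\vecGn(\vbfd_n^*)$ conditioned on being simple, and that this conditioning event has probability bounded away from $0$ under \autoref{cond:main}; hence $P_n$ whp transfers from $\vecGn(\vbfd_n^*)$ to $\vecGn^{s}(\vbfd_n^*)$ as well. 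The law of total probability then gives
\begin{equation}
\bP(\neg P_n)\le \bP(\neg \cG_n)+\bE\bigl[\bP(\neg P_n \mid \vbfd_n^*)\,\mathbb{1}_{\cG_n}\bigr]=o(1).
\end{equation}

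The main subtlety is that the hypothesis ``$\vecGn$ has $P_n$ whp'' is being applied to the random sequence $\vbfd_n^*$, so one really needs it uniformly over all deterministic degree sequences that satisfy \autoref{cond:main} with limit $D$. In this paper, all the relevant estimates depend only on the limit $D$ (through $\lambda$, $\nu$, $\hnu_{+}$, $\hnu_{-}$, and a few auxiliary quantities derived from $f$), so such uniformity is automatic. If one prefers to avoid a uniformity assumption, a standard subsequence argument works: if $\bP(\neg P_n)$ did not tend to $0$, one could extract a subsequence and a typical realization of $\vbfd_n^*$ satisfying \autoref{cond:main} along which the corresponding configuration model failed $P_n$ with probability bounded away from $0$, contradicting the hypothesis.
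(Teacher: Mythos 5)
The paper explicitly omits the proof of this lemma, so there is nothing to compare against; I can only assess your argument on its own terms. Your overall strategy — condition on the realized bi-degree sequence, invoke \autoref{rem:simple} to pass from the configuration model to the uniform simple digraph when needed, and then transfer the ``whp'' statement back to $\dsD_n$ — is the natural and correct one, and your final subsequence argument is the right mechanism to make it rigorous.

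Two points deserve tightening. First, the event $\cG_n$ in your first paragraph is not well-defined: \autoref{cond:main} is a statement about a \emph{sequence} of degree sequences, involving $n\to\infty$ limits, so ``the realization $\vbfd_n^*$ fulfils (i)--(iii)'' has no meaning for a single $n$. The displayed application of the law of total probability therefore does not quite type-check as written, and should be regarded as heuristic. Second, your claim that ``uniformity is automatic'' because the estimates depend only on $D$ is too quick: the hypothesis ``$\vecGn$ has $P_n$ whp'' is quantified over deterministic sequences satisfying \autoref{cond:main}, and the rate at which the $o(1)$ error tends to zero is allowed to depend on the particular sequence (e.g.\ on how fast $\Delta_n/\sqrt{n}\to 0$). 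One cannot simply plug in a random sequence without either a uniformity assumption or the subsequence device.

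Fortunately the subsequence argument you sketch does close the gap, and it does not require any uniformity. Concretely: if $\p{\neg P_n}\not\to 0$, extract $(n_k)$ and $\delta>0$ with $\p{\neg P_{n_k}}\ge\delta$; since $\p{\neg P_{n_k}}=\E{\p{\neg P_{n_k}\mid\vbfd_{n_k}^*}}$ and the conditional probability is bounded by $1$, one gets $\p{\p{\neg P_{n_k}\mid\vbfd_{n_k}^*}\ge\delta/2}\ge\delta/2$. Pass to a further subsequence along which the (countably many) convergences in \autoref{cond:main} hold almost surely, and apply reverse Fatou to the indicator events to find a positive-probability set of realizations for which the conditional failure probability stays $\ge\delta/2$ infinitely often \emph{and} \autoref{cond:main} holds along the subsequence. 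Fixing such a realization and extending it to a full sequence satisfying \autoref{cond:main} yields a deterministic degree sequence contradicting the hypothesis on $\vecGn$ (and, via \autoref{rem:simple}, on $\vecGn^s$). So the conclusion is: your proof captures the essential ideas, but the rigorous version is the subsequence argument, not the first paragraph.
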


\subsubsection{Regular digraphs}

The $d$-out model \(\dsD_{n,d\text{-}\tout}\) is a directed multigraph on \([n]\) in which each of
vertex is given $d$ out-edges whose end verteices are chosen independently and uniformly at random
from all vertices.  In this model, \(D_{n}\) converges in distribution, first moment, and second
moment to \((D^{+},D^{-})\), where \(D^{+} \equiv d\) and \(D^{-} \eqd \Poi(d)\).  When \(d \ge 2\), whp there
are thin in-neighbourhoods but no thin out-neighbourhood in \(\dsD_{n,d\text{-}\tout}\).  Therefore,
we recover the following result in \cite{addario2015diameter} by applying \autoref{thm:diam}, and
\autoref{NYDQE}:

\begin{thm}\label{thm:dout}
    Assume that \(d \ge 2\).  Let \(\lambda_d\) be the unique solution of \(d e^{-d} = \lambda_{d}
            e^{-\lambda_{d}}\) on \( (0,1)\). Then
    \begin{equation}\label{NLWTF}
        \frac{\diam{}(\dsD_{n,d\text{-}\tout})}{\log n}
        \to
        \frac{1}{\log(1/\lambda_{d})}
        +
        \frac{1}{\log d}
        ,
    \end{equation}
    in probability.
\end{thm}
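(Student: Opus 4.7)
The plan is to reduce to the directed configuration model via \autoref{NYDQE} and then invoke part (i) of \autoref{thm:diam}.

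First I would identify the limiting bi-degree distribution. In $\dsD_{n,d\text{-}\tout}$, out-degrees are deterministically $d$, while the in-degree of any fixed vertex is distributed as $\Bin(nd,1/n)$ and thus converges to $\Poi(d)$. Setting $D^+\equiv d$ and $D^-\sim \Poi(d)$, a standard concentration argument (e.g., second-moment bounds exploiting the weak dependence among distinct vertex in-degrees, which share only the mild constraint $\sum_i d_i^-=nd$) shows that the empirical proportions $n_{k,d}/n$ and the empirical first and second moments of the in-degree concentrate around those of $\Poi(d)$. This yields in probability that $\dsD_{n,d\text{-}\tout}$ satisfies \autoref{cond:main} with $\lambda=d$, $\E[(D^-)^2]=d+d^2<\infty$, and $\E[D^-D^+]=d\cdot\E[D^-]=d^2<\infty$.

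Next I would compute the three parameters appearing in~\eqref{eq:diam}. Since $\nu=\E[D^-D^+]/\lambda=d\geq 2>1$, we are in the supercritical case. Using independence of $D^-$ and (the constant) $D^+$ in the limit, the bivariate generating function is $f(z,w)=e^{d(z-1)}w^d$, and a direct calculation gives
\begin{equation}
f_\tin(1,w)=w^d,\qquad f_\tout(z,1)=e^{d(z-1)},\qquad g(z,w)=d\,e^{d(z-1)}w^{d-1}.
\end{equation}
For $\hnu_+$: since $d\geq 2$, the equation $w^d=w$ has unique root $\rho_\tin=0$ in $[0,1)$, so $s_+=1$ and $\hnu_+=g(1,0)=0$. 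For $\hnu_-$: the Poisson fixed-point equation $e^{d(z-1)}=z$ has a unique root $\rho_\tout\in[0,1)$; substituting $z=\lambda_d/d$ turns it into $\lambda_d e^{-\lambda_d}=d\,e^{-d}$, which is exactly the defining equation of $\lambda_d$, so $\rho_\tout=\lambda_d/d$. Using $e^{\lambda_d-d}=\lambda_d/d$ again,
\begin{equation}
\hnu_-=g(\lambda_d/d,1)=d\,e^{\lambda_d-d}=\lambda_d.
\end{equation}

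Substituting $\nu=d$, $\hnu_+=0$, and $\hnu_-=\lambda_d$ into~\eqref{eq:diam} together with the convention $1/\log(1/0)=0$ yields~\eqref{NLWTF}. The only nontrivial step is the verification of \autoref{cond:main} for the random bi-degree sequence of $\dsD_{n,d\text{-}\tout}$, which requires showing that the empirical in-degree distribution and its low moments concentrate around those of $\Poi(d)$; everything else is a routine generating-function computation. I would expect the concentration argument to be the main (but mild) obstacle, since the vertex in-degrees are only weakly dependent via the global constraint that total in-degree equals $nd$, so the bounded-difference / Efron-Stein machinery gives the required convergence in probability.
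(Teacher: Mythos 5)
Your proposal is correct and follows essentially the same route as the paper: identify the limiting bi-degree distribution $(D^-,D^+)=(\Poi(d),d)$, verify \autoref{cond:main} for the random degree sequence, and then substitute into~\eqref{eq:diam} via \autoref{NYDQE}. The paper states without detail that $\hnu_+=0$ (no thin out-neighbourhoods when $d\ge 2$) and tacitly that $\hnu_-=\lambda_d$, whereas you correctly work out the generating-function computations showing $s_+=1$, $\rho_\tout=\lambda_d/d$, and $\hnu_-=d\,e^{\lambda_d-d}=\lambda_d$, so your write-up is a slightly more explicit version of the same argument.
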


A related model is the \(d\)-in/out model \(\dsD_{n,d}\), i.e., the uniform random directed multigraph on \([n]\)
in which each vertex has both in- and out-degree \(d\). When \(d \ge 2\), we have neither
thin out-neighbourhood nor thin in-neighbourhood and the diameter is of the same order as the
typical distance.

\begin{thm}\label{thm:dout:1}
    Assume that \(d \ge 2\).  Then
    \begin{equation}\label{MGBEP}
        \frac{\diam{}(\dsD_{n,d})}{\log n}
        \to
        \frac{1}{\log d}
        ,
    \end{equation}
    in probability.
\end{thm}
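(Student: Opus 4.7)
The plan is to reduce Theorem~\ref{thm:dout:1} to a direct invocation of~\autoref{thm:diam}(i) applied to the degenerate bi-degree sequence $\vbfd_n = ((d,d),\dots,(d,d))$, and then to transfer the result to $\dsD_{n,d}$ via~\autoref{NYDQE}. First I would verify that~\autoref{cond:main} holds: since $D_n \equiv (d,d)$ is deterministic, the limit $D$ is the point mass at $(d,d)$, and convergence in distribution, first moment and second moment are all immediate. This yields $\lambda = d$ and $\nu = \mathbb{E}[D^-D^+]/\lambda = d \geq 2$, so we sit in the supercritical regime.

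Next I would compute $\hnu_+$ and $\hnu_-$. Because $\lambda_{d,d} = 1$ is the only nonzero atom of $D$, the definitions of the size-biased distributions in~\autoref{sec:size:bias} give $\p{\dinp = d} = d\lambda_{d,d}/\lambda = 1$ and symmetrically $\p{\doutm = d} = 1$; in other words, both $\dinp$ and $\doutm$ are deterministically equal to $d$. Since $d \geq 2$, we have $\p{\dinp \leq 1} = \p{\doutm \leq 1} = 0$, and the observation immediately following~\eqref{PXNEC} (namely $\hnu_\xi = 0$ iff $\p{\xi \leq 1}=0$) gives $\hnu_+ = \hnu_- = 0$. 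The intuition is transparent: every in- or out-neighbourhood grows by a factor of at least $d$ at each step, so no thin neighbourhood of either type can exist.

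Finally I would substitute these values into~\eqref{eq:diam}: under the convention $1/\log(1/0) = 0$, the first two terms vanish and only the typical-distance contribution $1/\log \nu = 1/\log d$ survives, proving the statement for $\vecGn$ with this degree sequence, which~\autoref{NYDQE} lifts to $\dsD_{n,d}$. The main \emph{obstacle} is essentially vacuous given~\autoref{thm:diam}: this is a corollary, and the only step requiring a moment's thought is checking that the size-biased marginals inherit the deterministic structure of $D$, so that both thin-neighbourhood terms collapse to zero and the diameter is asymptotically driven entirely by typical distances.
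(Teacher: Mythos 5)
Your proof is correct and follows essentially the same route the paper takes (the paper simply observes that for $d \ge 2$ there are no thin in- or out-neighbourhoods, so $\hnu_+ = \hnu_- = 0$ and only the typical-distance term survives, then invokes~\autoref{thm:diam} and~\autoref{NYDQE}). Your explicit verification that $\dinp \equiv d$ and $\doutm \equiv d$ are deterministic, hence $\p{\dinp \le 1} = \p{\doutm \le 1} = 0$ and therefore $\hnu_\pm = 0$ by the remark following~\eqref{PXNEC}, is exactly the calculation the paper leaves implicit.
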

\subsubsection{Binomial random digraph}

A binomial random digraph \(\dsD_{n,p}\) is a simple digraph on \([n]\) in which a directed edge is added
between each ordered pair of vertices independently with probability \(p\), as described in
\cite{karp1990a, luczak2009}.  

A slightly different model  \(\dsD_{n,p}^{*}\) recently introduced by Ralaivaosaona, Rasendrahasina
and Wagner~\cite{ralaivaosaona2020} is constructed by adding an \emph{undirected} edge between each pair
of vertices independently with probability \(2p\) and choosing the direction of the edge with a fair
coin toss.

In both models, assuming that \(n p \to \nu > 1\), \(D_{n}\) converges in distribution
to a pair of independent Poisson random variables with expectation \(\nu\).  Thus, we
have the following result by applying \autoref{thm:diam}, 
and \autoref{NYDQE}:

\begin{thm}\label{thm:binom}
    Assume that \(n p \to \nu > 1\).
    Let \(\hnu\) be the unique solution of \(\hnu e^{-\hnu} = \nu
    e^{-\nu}\) on \( (0,1)\). Then
    \begin{equation}\label{AOPLN}
        \frac{\diam{}(\dsD_{n,p})}{\log n}
        \to
        \frac{2}{\log(1/\hnu)}
        +
        \frac{1}{\log \nu}
        ,
    \end{equation}
    in probability,
    and
    \begin{equation}\label{GLFAC}
        \frac{\diam{}(\dsD_{n,p}^{*})}{\log n}
        \to
        \frac{2}{\log(1/\hnu)}
        +
        \frac{1}{\log \nu}
        ,
    \end{equation}
    in probability.
\end{thm}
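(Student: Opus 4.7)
The plan is to apply \autoref{thm:diam}(i) to a directed configuration model with the appropriate Poisson limit, transferring the conclusion to $\dsD_{n,p}$ and $\dsD_{n,p}^{*}$ via \autoref{NYDQE}. For $\dsD_{n,p}$, I would first observe that it is uniform conditional on its degree sequence (a standard consequence of edge independence), and then verify \autoref{cond:main}: for any fixed vertex $v$, the events $[u\to v]$ and $[v\to u]$ depend on disjoint edge randomness, so $d_v^{-}$ and $d_v^{+}$ are independent $\Bin(n-1,p)$; as $np\to\nu$ they converge jointly to two independent $\Poi(\nu)$'s in distribution and, by uniform integrability of low-order moments of the Binomial, also in first and second moments. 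Hence $D_n \inlaw D=(D^-,D^+)$ with $D^\pm$ independent $\Poi(\nu)$.

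Next I would compute the relevant branching parameters. With $f(z,w)=e^{\nu(z-1)}e^{\nu(w-1)}$ and $\lambda=\nu$, both marginals of the size-biased distributions satisfy $D_\tin^{+}\eqd \Poi(\nu)$ and $D_\tout^{-}\eqd \Poi(\nu)$, so $s_+=s_-=s$, where $s\in(0,1)$ is the unique positive root of $1-s=e^{-\nu s}$. Moreover,
\begin{equation}
\hnu_+=\hnu_- = g(1,1-s) = \nu e^{-\nu s}=\nu(1-s).
\end{equation}
Writing $\hnu := \nu(1-s)$, the identity $1-s=e^{-\nu s}$ yields $\hnu e^{-\hnu} = \nu(1-s)e^{-\nu(1-s)}=\nu e^{-\nu}$; since $x\mapsto xe^{-x}$ is strictly increasing on $(0,1)$, $\hnu$ is the unique such root in $(0,1)$. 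Substituting $\hnu_+=\hnu_-=\hnu$ into \autoref{thm:diam}(i) gives~\eqref{AOPLN}.

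For $\dsD_{n,p}^{*}$, the events $[u\to v]$ and $[v\to u]$ are now mutually exclusive for every pair, so conditional on its degree sequence $\dsD_{n,p}^{*}$ is uniform over simple digraphs \emph{without $2$-cycles} rather than over all simple digraphs. To handle this I would note that, under \autoref{cond:main}, the number of $2$-cycles in $\vecGn$ is asymptotically Poisson with bounded mean (a short factorial-moment computation), so the probability that $\vecGn$ is simple and $2$-cycle free is bounded away from $0$; consequently whp properties of $\vecGn$ transfer to its conditional law given this event, yielding a slight strengthening of \autoref{NYDQE}. The joint in/out-degree of a vertex in $\dsD_{n,p}^{*}$ is a sum of iid trivariate $(1,0)/(0,1)/(0,0)$ trials with probabilities $(p,p,1-2p)$; a standard Poisson approximation shows it still converges to two independent $\Poi(\nu)$'s in the first two moments, so the computation of $\hnu_\pm$ is identical and~\eqref{GLFAC} follows.

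The main obstacle is the minor bookkeeping for $\dsD_{n,p}^{*}$: one must upgrade \autoref{NYDQE} to allow conditioning on the absence of $2$-cycles, and re-verify \autoref{cond:main} under the perturbed (mutually exclusive) edge structure. Both are standard but require care. All other steps are routine generating-function manipulations feeding into \autoref{thm:diam}(i).
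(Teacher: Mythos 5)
Your proposal is correct and takes the same route as the paper: verify \autoref{cond:main} with the limit $D$ being a pair of independent $\Poi(\nu)$ variables, compute $\hnu_{+}=\hnu_{-}=\hnu=\nu(1-s)$ from the bivariate generating function (so that $\hnu e^{-\hnu}=\nu e^{-\nu}$), and conclude via \autoref{thm:diam}(i) and \autoref{NYDQE}. Your additional observation that $\dsD_{n,p}^{*}$ is uniform only over simple digraphs \emph{without} $2$-cycles --- a strict subset of the paper's notion of simple digraph --- and that one must therefore check that the configuration model is $2$-cycle-free with probability bounded away from zero, is a genuine point of care that the paper's one-line proof glosses over; the Poisson-approximation argument you sketch is the correct way to close that gap.
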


\section*{Acknowledgements}

We thank Svante Janson for helpful discussions on branching processes in the subcritical regime.

\bibliographystyle{abbrvnat}
\bibliography{rdg}

\end{document}